\DeclareMathOperator{\li}{li}
\DeclareMathOperator{\lcm}{lcm}
\newtheorem{thm}{Theorem}[section]
\newtheorem{lem}{Lemma}[section]
\newtheorem{conj}{Conjecture}[section]
\newtheorem{exa}{Example}[section]
\newtheorem{cor}{Corollary}[section]
\newtheorem{hyp}{Hypothesis}[section]
\newtheorem{prob}{Problem}[section]
\newcommand{\N}{\mathbb{N}}
\newcommand{\Z}{\mathbb{Z}}
\newcommand{\R}{\mathbb{R}}
\newcommand{\C}{\mathbb{C}}
\newcommand{\tP}{\mathbb{P}}
\title{Results for the Mobius Function and Liouville Function over the Shifted Primes}
\date{}
\author{N. A. Carella}
\begin{document}
\maketitle

\textbf{\textit{Abstract}:} 
This article provides new asymptotic conditional results for the summatory Mobius function $\sum_{p \leq x} \mu(p+a) =O \left (x(\log x)^{-c} \right )$ and the summatory Liouville function $\sum_{p \leq x} \lambda(p+a) =O \left (x(\log x)^{-c} \right )$ over the shifted primes, where $a\ne0$ is a fixed parameter, and $c>1$ is an arbitrary constant. These results improve the current estimates $\sum_{p \leq x} \mu(p+a)=(1-\delta)\pi(x)$, and $\sum_{p \leq x} \lambda(p+a)=(1-\delta)\pi(x)$ for $\delta>0$,  respectively. Furthermore, a conditional proof for the autocorrelation function $\sum_{p \leq x} \mu(p+a)\mu(p+b) =O \left (x(\log x)^{-c} \right )$, and an unconditional proof for the autocorrelation function $\sum_{p \leq x} \lambda(p+a)\lambda(p+b) =O \left (x(\log x)^{-c} \right )$ over the shifted primes, where $a\ne b$, are also included. \let\thefootnote\relax\footnote{\today \date{} \\
	\textit{AMS MSC}: Primary 11N37, Secondary 11L20. \\
	\textit{Keywords}: Shifted prime; Arithmetic function; Mobius function; Liouville function; vonMangoldt function; Chowla conjecture; Sarnak conjecture.}

\tableofcontents
\newpage

\section{Introduction} \label{S1212P}
The Mobius function $\mu:\mathbb{N} \longrightarrow \{-1,0,1\}$ is defined by
\begin{equation}\label{eq1212P.050A}
	\mu(n) =
	\left \{
	\begin{array}{ll}
		(-1)^{v}     &n=p_1 p_2 \cdots p_w\\
		0           &n \ne p_1 p_2 \cdots p_w,\\
	\end{array}
	\right .
\end{equation}
where $n=p_1^{v_1} p_2^{v_2} \cdots p_w^{v_w}$, the $p_i\geq 2$ are primes, and $v_i\geq0$. Similarly, the Liouville function $\lambda:\mathbb{N} \longrightarrow \{-1,1\}$ is defined by
\begin{equation}\label{eq1212P.050B}
	\lambda(n) =
	(-1)^{p_1^{v_1} p_2^{v_2} \cdots p_w^{v_w}}    .
\end{equation}
The Mobius autocorrelation function 
\begin{equation} \label{eq1212P.060A}
	\sum_{n \leq x} \mu(n)\mu(n+a) 
\end{equation}and the Liouville autocorrelation function
\begin{equation} \label{eq1212P.060B}
	\sum_{n \leq x} \lambda(n)\lambda(n+a) 
\end{equation}
are topics of current research in several area of Mathematics, \cite{WD2016}, \cite{TJ2018},  \cite{MR2021}, et alii. Restricting the autocorrelation functions of multiplicative functions \eqref{eq1212P.060A} over the integers to the shifted primes reduce these functions to standard arithmetic averages over the shifted primes. For example, \eqref{eq1212P.060A} reduces to

\begin{equation} \label{eq1212P.070}
	\sum_{p \leq x} \mu(p)\mu(p+a)=-\sum_{p \leq x} \mu(p+a). 
\end{equation}	Similarly, an autocorrelation function of degree 3,
\begin{equation} \label{eq5980P.150}
	\sum_{n \leq x} \mu(n)\mu(n+a)\mu(n+b), 
\end{equation}
where $a,b\ne0$ such that $a\ne b$ are small fixed integers, over the integers reduces to an autocorrelation function of degree 2,
\begin{equation} \label{eq5980P.160}
	-\sum_{p \leq x} \mu(p+a)\mu(p+b)
\end{equation}		
over the shifted primes. Accordingly, these two open problems are equivalent. \\
		
Currently, the best asymptotic result for this summatory function is 
\begin{equation} \label{eq1212P.080}
	\sum_{p \leq x} \mu(p+a)=(1-\delta)\pi(x), 
\end{equation}
where $\delta>0$ is a constant, see \cite[Theorem 1]{HA1989}, and the restricted average
\begin{equation} \label{eq1212P.090}
	\sum_{a \leq z,}	\sum_{p \leq x} \mu(p+a)=o(z\pi(x)), 
\end{equation}
see \cite{LJ2021} for extensive details on recent developments on this topic. It is expected that $\sum_{p \leq x} \mu(p+a)=o(\pi(x))$. This note proposes the first nontrivial upper bound.

\begin{thm} \label{thm1212MP.200} Assume Hypothesis {\normalfont \ref{hyp2225P.550}}. Let $c>1$ be an arbitrary constant, and let $x>1$ be a large number. If $a \ne0$ is a small fixed integer, then
	\begin{equation} \label{eq1212T.200}
		\sum_{p \leq x} \mu(p+a) =O \left (\frac{x}{(\log x)^{c}} \right )\nonumber. 
	\end{equation}	
\end{thm}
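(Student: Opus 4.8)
The plan is to transfer the sum over shifted primes to a von Mangoldt weighted correlation sum, to control the latter by the Hypothesis, and then to recover the unweighted sum by partial summation. Let $\Lambda$ denote the von Mangoldt function, so that $\Lambda(n) = \log p$ when $n = p^k$ and $\Lambda(n) = 0$ otherwise. Separating the contribution of the primes from that of the proper prime powers gives
\[
\sum_{n \leq x} \Lambda(n)\mu(n+a) = \sum_{p \leq x} (\log p)\,\mu(p+a) + \sum_{\substack{p^k \leq x \\ k \geq 2}} (\log p)\,\mu(p^k+a),
\]
and since $|\mu| \leq 1$ the proper prime-power tail is $O(\sqrt{x}\,\log x)$, hence negligible against the target. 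Writing $T(t) := \sum_{p \leq t}(\log p)\,\mu(p+a)$ for the weighted prime sum, this shows that $T(x)$ and $\sum_{n \leq x}\Lambda(n)\mu(n+a)$ agree up to this admissible error.

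Next I would invoke Hypothesis \ref{hyp2225P.550}, which is expected to furnish the essential input: a nontrivial bound of the shape $\sum_{n \leq t}\Lambda(n)\mu(n+a) = O\!\left(t/(\log t)^{c'}\right)$ for every constant $c' > 1$, valid uniformly for $2 \leq t \leq x$. As the exponent is arbitrary I would apply it with $c' = c+1$, which after absorbing the prime-power error above yields $T(t) = O\!\left(t/(\log t)^{c+1}\right)$ uniformly in the same range. The uniformity in $t$, rather than merely at the endpoint $t=x$, is precisely what the partial summation below demands, and is the single point at which the exact formulation of the Hypothesis must be consulted.

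Finally I would strip off the logarithmic weight by Abel summation. Writing
\[
\sum_{p \leq x}\mu(p+a) = \sum_{p \leq x}\frac{(\log p)\,\mu(p+a)}{\log p} = \frac{T(x)}{\log x} + \int_{2}^{x} \frac{T(t)}{t\,(\log t)^{2}}\,dt,
\]
and inserting $T(t) \ll t/(\log t)^{c+1}$, the boundary term contributes $O\!\left(x/(\log x)^{c+2}\right)$, while the integral is $\ll \int_{2}^{x} (\log t)^{-(c+3)}\,dt \ll x/(\log x)^{c+3}$. Both are $O\!\left(x/(\log x)^{c}\right)$, which is the assertion of the theorem.

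The genuine difficulty is concentrated entirely in the Hypothesis: establishing a correlation estimate $\sum_{n \leq x}\Lambda(n)\mu(n+a) = o(x)$, let alone with a power-of-log saving, is of the same depth as the Chowla and Sarnak conjectures for the pair $(\Lambda,\mu)$ and lies beyond present unconditional techniques. Everything outside the Hypothesis, namely the prime-power truncation and the Abel summation, is routine; the only quantitative point to monitor is that the saving demanded of the Hypothesis must carry one extra power of $\log x$ to compensate for the weight removed in the final step.
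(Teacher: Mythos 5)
Your Abel-summation mechanics are fine, but the proof does not connect to the Hypothesis the theorem actually assumes, and this is a genuine gap rather than a presentational one. Hypothesis \ref{hyp2225P.550} is not a bound on the correlation $\sum_{n\le x}\Lambda(n)\mu(n+a)$; it is a Bombieri--Vinogradov/Siebert--Wolke type equidistribution statement, namely
$\sum_{q\le x^{1/2}/\log^B x}\max_{d}\max_{z\le x}\bigl|\sum_{p\le z,\ p\equiv d \bmod q}\lambda(p+a)\bigr|\ll x(\log x)^{-C}$
(and the same with $\mu$ in place of $\lambda$). The input you posit instead, $\sum_{n\le t}\Lambda(n)\mu(n+a)=O(t/(\log t)^{c'})$, is equivalent, up to one power of $\log$ and exactly the partial summation you perform, to the conclusion of the theorem itself --- indeed the paper later deduces that $\Lambda$--$\mu$ correlation bound \emph{from} this theorem (Theorem \ref{thm5757.500}). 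So your argument reduces the statement to an essentially equivalent restatement and leaves all the content in an unproved assumption that is not the paper's assumption. You flagged that the formulation of the Hypothesis would need to be consulted; consulting it shows your guessed input is not available.

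The paper's route is different in kind: it writes $\mu(p+a)=\lambda(p+a)\mu^2(p+a)$, expands $\mu^2(p+a)=\sum_{d^2\mid p+a}\mu(d)$, and switches the order of summation to get $\sum_{d^2\le x}\mu(d)\sum_{p\le x,\ p\equiv -a\bmod d^2}\lambda(p+a)$. The moduli $d^2\le x^{1/2-\varepsilon}$ are handled by part (i) of Hypothesis \ref{hyp2225P.550} (the Liouville-in-progressions bound), and the large moduli $d^2>x^{1/2-\varepsilon}$ are bounded trivially by $\sum_{d^2>x^{1/2-\varepsilon}}(x/d^2+1)=O(x^{1/2+\varepsilon})$. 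If you want to salvage your write-up, you would need to replace your assumed correlation bound by this divisor-switching argument (or, more directly, observe that the $q=1$ term of part (ii) of the Hypothesis already majorizes $|\sum_{p\le x}\mu(p+a)|$, since all terms in the hypothesized sum over $q$ are nonnegative); as it stands, nothing in your proof engages with the arithmetic-progression structure that the Hypothesis actually provides.
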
	
There is a similar result for the Liouville function, but it has a sharper error term. 
\begin{thm} \label{thm1212LP.200} Assume Hypothesis {\normalfont \ref{hyp2225P.550}}. Let $x>1$ be a large number. If $a \ne0$ is a small fixed integer, then
	\begin{equation} \label{eq1212T.210}
		\sum_{p \leq x} \lambda(p+a) =O \left (xe^{-c\sqrt{\log x}}\right )\nonumber, 
	\end{equation}	
where $c>0$ is an absolute constant.
\end{thm}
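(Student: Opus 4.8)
The plan is to pass from the sum over primes to a sum weighted by the von Mangoldt function $\Lambda$, and then to exploit the completely multiplicative structure of $\lambda$ through a Vaughan-type decomposition. First I would write, by partial summation and separating the higher prime powers,
\[
	\sum_{p \le x} \lambda(p+a) = \frac{1}{\log x}\sum_{n \le x}\Lambda(n)\lambda(n+a) + \int_2^x \left(\sum_{n \le t}\Lambda(n)\lambda(n+a)\right)\frac{dt}{t(\log t)^2} + O\!\left(x^{1/2}\right),
\]
the final error absorbing the contribution of the prime powers $p^k$ with $k \ge 2$. It therefore suffices to establish $\sum_{n \le x}\Lambda(n)\lambda(n+a)=O\!\left(xe^{-c\sqrt{\log x}}\right)$, since the claimed bound for the prime sum then follows after integrating.

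To estimate $\sum_{n \le x}\Lambda(n)\lambda(n+a)$ I would apply Vaughan's identity with parameters $U,V$ chosen to be small powers of $\log x$, splitting the sum into a linear (Type I) part and a bilinear (Type II) part. The Type I sums have the shape $\sum_{d \le D} c_d \sum_{m \le x/d}\lambda(dm+a)$, that is, averages of $\lambda$ along the progression $n \equiv a \pmod{d}$. Here the complete multiplicativity of $\lambda$ is decisive: since $\sum_{n \ge 1}\lambda(n)n^{-s}=\zeta(2s)/\zeta(s)$ and, twisted by a character, $\sum_{n\ge1}\lambda(n)\chi(n)n^{-s}=L(2s,\chi^2)/L(s,\chi)$, the classical zero-free region yields a Siegel--Walfisz estimate $\sum_{m \le y,\, m \equiv r\,(d)}\lambda(m)=O\!\left(y e^{-c\sqrt{\log y}}\right)$ uniformly for $d \le (\log y)^{A}$. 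This is precisely the mechanism that produces the sharper error term for $\lambda$, in contrast to the mere power-of-logarithm saving obtained for $\mu$; summing over $d \le D$ keeps the cumulative Type I error of admissible size.

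The Type II sums are bilinear forms $\sum_{m \sim M}\sum_{n \sim N}\alpha_m \beta_n \,\lambda(mn+a)$ with $MN \asymp x$ and $M,N$ ranging over a wide middle interval, and these are where Hypothesis \ref{hyp2225P.550} enters. I would invoke it to extract the required cancellation in these forms, presumably after a Cauchy--Schwarz step that reduces the problem to a dispersion-type count of the correlations of $\lambda$ along shifted products, weighted by the coefficients $\alpha_m,\beta_n$. Once both the Type I and the Type II contributions are shown to be $O\!\left(xe^{-c\sqrt{\log x}}\right)$, balancing the Vaughan parameters against the zero-free region assembles the stated estimate.

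The hard part will be the Type II bilinear estimate. Unlike the Type I sums, it cannot be reduced to the behaviour of $\lambda$ in a fixed arithmetic progression, and the correlation of the completely multiplicative function $\lambda$ with the ``rough'' shifted factor $\lambda(\,\cdot\, + a)$ admits no known unconditional treatment of prime-number-theorem quality. This is exactly the input that Hypothesis \ref{hyp2225P.550} is designed to supply; granting it, the remainder of the argument is the standard bookkeeping of the zero-free region together with the partial summation above.
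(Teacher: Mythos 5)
Your route is genuinely different from the paper's, and it has a gap at its central step. The paper does not go through the von Mangoldt function or Vaughan's identity at all: it substitutes the convolution identity $\lambda(n)=\sum_{d^2\mid n}\mu(n/d^2)$ into $\sum_{p\le x}\lambda(p+a)$, interchanges summation to get $\sum_{d^2\le x}\sum_{p\le x,\,d^2\mid p+a}\mu((p+a)/d^2)$, splits the range of $d$ at $x_0=x^{1/2-\varepsilon}$, bounds the small-$d$ block directly by Hypothesis \ref{hyp2225P.550} (used as a Bombieri--Vinogradov-type level-of-distribution statement for the shifted primes, with modulus $q=d^2\le x^{1/2-\varepsilon}$), and bounds the large-$d$ tail trivially by $x\sum_{d^2>x_0}d^{-2}=O(x^{1/2+\varepsilon})$. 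In other words, the hypothesis is consumed exactly in the form in which it is stated --- as an average over moduli of sums over primes in progressions --- and no bilinear estimate is ever needed.

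The gap in your proposal is the Type II step. Hypothesis \ref{hyp2225P.550} is an equidistribution-in-arithmetic-progressions statement; it says nothing about bilinear forms $\sum_{m}\sum_{n}\alpha_m\beta_n\lambda(mn+a)$ with general bounded coefficients. After the Cauchy--Schwarz step you describe, the diagonal-plus-off-diagonal count requires bounds on binary correlations of the shape $\sum_{n}\lambda(m_1n+a)\lambda(m_2n+a)$ for $m_1\ne m_2$, which are Chowla-type sums along distinct linear forms, not sums of $\lambda(p+a)$ over a progression; the hypothesis does not supply them, and you give no argument that it does. The difficulty is compounded by your choice of $U,V$ as powers of $\log x$: that forces essentially the entire sum into the Type II range, so the unconditional Siegel--Walfisz input for $\lambda$ (the one genuinely correct ingredient in your sketch) carries almost none of the weight, and the whole burden falls on the unproven bilinear claim. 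A secondary point: since the hypothesis only saves an arbitrary power of $\log x$, no argument routed through it can produce the stated error term $O(xe^{-c\sqrt{\log x}})$ in the ranges where the hypothesis is the binding input --- though it should be said that the paper's own proof of \eqref{eq1616LP.230} yields only $O(x/(\log x)^c)$ for the small-modulus block and therefore has the same mismatch with the error term announced in the theorem.
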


Furthermore, a conditional result for the Mobius autocorrelation function over the shifted primes achieves the followings asymptotic formula.
\begin{thm} \label{thm1212MP.700} Assume Hypothesis {\normalfont \ref{hyp2225P.560}}. Let $x>1$ be a large number, and let $a,b \in \Z$ be small fixed integers such that $a\ne b$. Then,
	\begin{equation} \label{eq1212P.700}
		\sum_{p \leq x} \mu(p+a)\mu(p+b) =O \left (\frac{x}{(\log x)^{c}} \right )\nonumber, 
	\end{equation}	
where $c>1$ is an arbitrary constant. 
\end{thm}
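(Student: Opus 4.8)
The plan is to transfer the shifted-prime sum to a von Mangoldt-weighted average over the integers, to dismantle the von Mangoldt weight by a combinatorial identity, and to feed Hypothesis \ref{hyp2225P.560} in only at the final stage as the source of arithmetic cancellation. First I would replace $p$ by the weight $\Lambda(n)$: writing $g(n)=\mu(n+a)\mu(n+b)$ and $T(t)=\sum_{n\le t}\Lambda(n)g(n)$, the prime powers contribute $O(\sqrt{x})$ and partial summation yields
\begin{equation*}
	\sum_{p \le x} \mu(p+a)\mu(p+b) = \frac{T(x)}{\log x} + \int_2^x \frac{T(t)}{t(\log t)^2}\,dt + O\!\left(\sqrt{x}\right),
\end{equation*}
so that the entire problem reduces to the estimate $T(x)=O\!\left(x(\log x)^{-c}\right)$. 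It is worth stressing that the diagonal shortcut \eqref{eq1212P.070}, which collapses a degree-two correlation to a degree-one sum by using $\mu(p)=-1$, is \emph{not} available here: that collapse occurs only when $a$ or $b$ vanishes, and in that degenerate case the statement already follows from Theorem \ref{thm1212MP.200}. Hence a genuinely bilinear treatment of $T(x)$ is required.

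Next I would expand $\Lambda$ by Vaughan's identity (or the Heath--Brown identity) with parameters $U,V\asymp x^{1/3}$, which splits $T(x)$ into \emph{Type I} sums $\sum_{d\le U}\alpha_d\sum_{m\le x/d}g(dm)$ with divisor-bounded coefficients $\alpha_d$, and \emph{Type II} bilinear sums $\sum_{d}\sum_{m}\alpha_d\beta_m\,g(dm)$ with $d$ and $m$ confined to dyadic ranges bounded away from the endpoints. In every piece the inner object is a correlation of the Mobius function along an arithmetic progression, because $g(dm)=\mu(dm+a)\mu(dm+b)$ sweeps through the residues $a,b \bmod d$ as $m$ varies. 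For the Type I sums I would invoke Hypothesis \ref{hyp2225P.560} to obtain $\sum_{m\le y}\mu(dm+a)\mu(dm+b)=O\!\left(y(\log y)^{-c'}\right)$ uniformly in the admissible moduli $d$, and then sum this saving against the coefficient weights $\sum_{d\le U}|\alpha_d|/d\ll(\log x)^{O(1)}$. For the Type II sums I would apply the Cauchy--Schwarz inequality in the outer variable $m$; expanding the resulting square replaces each term by the four-point correlation $\sum_{m}\mu(dm+a)\mu(dm+b)\mu(d'm+a)\mu(d'm+b)$, summed over pairs $(d,d')$. The diagonal $d=d'$ here contributes only $O\!\left(x(\log x)^{O(1)} D^{-1/2}\right)$ after the square root, which is negligible precisely because the Type II range forces $D\ge V\asymp x^{1/3}$ to be large, while the off-diagonal pairs are exactly what the hypothesis is designed to control.

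The principal obstacle is this last arithmetic input. The correlation sums $\sum_{m}\mu(dm+a)\mu(dm+b)$ and their four-point analogues are instances of Chowla's conjecture and are not known unconditionally, which is the sole reason Hypothesis \ref{hyp2225P.560} is assumed rather than established. Within the reduction two points demand care: (i) the two-point bound must hold \emph{uniformly} in the progression modulus $d$ up to a fixed power of $x$, so that the Type I contribution genuinely beats $x(\log x)^{-c}$ rather than merely each fixed-$d$ slice; and (ii) the Type II step needs cancellation in the \emph{four-point} correlation uniformly in the pair $(d,d')$, a strictly deeper instance of the correlation problem than the two-point bound used for Type I. Granting that the hypothesis is phrased with enough uniformity to close both points, summing the Type I and Type II estimates over the $O(\log x)$ dyadic blocks gives $T(x)=O\!\left(x(\log x)^{-c}\right)$, and inserting this into the partial-summation identity produces the claimed bound $O\!\left(x(\log x)^{-c}\right)$ for $\sum_{p\le x}\mu(p+a)\mu(p+b)$.
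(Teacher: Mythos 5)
Your reduction to $T(x)=\sum_{n\le x}\Lambda(n)\mu(n+a)\mu(n+b)$ by partial summation is fine, but the way you then consume Hypothesis \ref{hyp2225P.560} does not match what that hypothesis actually asserts, and this is a genuine gap rather than a matter of phrasing. After Vaughan's identity your Type I sums require
\begin{equation*}
\sum_{m\le y}\mu(dm+a)\mu(dm+b)=O\!\left(y(\log y)^{-c'}\right)
\end{equation*}
uniformly for $d$ up to a power of $x$; this is a two-point Chowla-type bound for the Mobius function over \emph{integers} in the progression $n\equiv 0 \bmod d$. Hypothesis \ref{hyp2225P.560} is instead a Bombieri--Vinogradov-type average, over moduli $q\le x^{1/2}/\log^Bx$, of the correlation $\sum_{p\le z,\ p\equiv d \bmod q}\mu(p+a)\mu(p+b)$ taken over \emph{shifted primes} in arithmetic progressions; it says nothing about the integer-level correlations your Type I estimate needs. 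Worse, your Type II step, after Cauchy--Schwarz, requires cancellation in the four-point quantities $\sum_m \mu(dm+a)\mu(dm+b)\mu(d'm+a)\mu(d'm+b)$ with two distinct dilations $d\ne d'$; no part of the hypothesis addresses four-point correlations at all, so ``granting that the hypothesis is phrased with enough uniformity'' is not something you can obtain by re-reading it --- you would have to assume a substantially stronger (integer-level, higher-order) form of the Chowla conjecture than the theorem permits.

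The paper's own proof never opens up the prime indicator and therefore needs none of this machinery. It writes $\mu(p+a)\mu(p+b)=\lambda(p+a)\mu^2(p+a)\lambda(p+b)\mu^2(p+b)$, expands each factor via $\mu^2(n)=\sum_{d^2\mid n}\mu(d)$, and so reduces the sum to $\sum_{d,e}\mu(d)\mu(e)\sum_{p\le x,\ p\equiv f\bmod q}\lambda(p+a)\lambda(p+b)$ with $q=\lcm(d^2,e^2)$. For $d^2,e^2\le x_0=x^{1/8-\varepsilon}$ the moduli stay below $x^{1/2-\varepsilon}$ and the Liouville part of the hypothesis applies verbatim to the inner sum over primes in progressions; the remaining ranges are bounded trivially by counting $p\le x$ with $p\equiv -a\bmod d^2$, giving $O(x^{1-\varepsilon})$. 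If you wish to salvage a Vaughan-type argument, you would need to reformulate the underlying hypothesis as an integer-level correlation statement with modulus uniformity and a four-point component, which changes the theorem being proved.
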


Unexpectedly, the matching result for the Liouville autocorrelation function is unconditional and has a sharper error term.

\begin{thm} \label{thm1212LP.700} Assume Hypothesis {\normalfont \ref{hyp2225P.560}}. Let $x>1$ be a large number, and let $a,b \in \Z$ be small fixed integers such that $a\ne b$. Then,
	\begin{equation} \label{eq1212P.710}
		\sum_{p \leq x} \lambda(p+a)\lambda(p+b) =O \left (xe^{-c\sqrt{\log x}}\right )\nonumber, 
	\end{equation}	
where $c>0$ is an absolute constant.
\end{thm}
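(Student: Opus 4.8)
The plan is to exploit the one feature that distinguishes $\lambda$ from $\mu$, namely that $\lambda$ is completely multiplicative. Since $\lambda(p+a)\lambda(p+b)=\lambda\big((p+a)(p+b)\big)$ identically, the two-point product collapses to a \emph{single} Liouville value evaluated at the reducible quadratic $(n+a)(n+b)=n^2+(a+b)n+ab$. This is precisely the step that fails for $\mu$, which is only multiplicative on coprime arguments and vanishes on non-squarefree integers, and it is what allows the Liouville estimate to be unconditional while the Mobius analogue of Theorem \ref{thm1212MP.700} is not. Thus I would first record
\[
\sum_{p\le x}\lambda(p+a)\lambda(p+b)=\sum_{p\le x}\lambda\big((p+a)(p+b)\big),
\]
and regard the right-hand side as a Liouville sum over the values of a fixed quadratic at the primes, i.e. the two-point correlation of $\lambda$ restricted to the shifts of the primes, to which the machinery already developed for Theorem \ref{thm1212LP.200} applies.

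Next I would pass from the sum over primes to a von Mangoldt--weighted sum, writing
\[
\sum_{p\le x}\lambda\big((p+a)(p+b)\big)\log p=\sum_{n\le x}\Lambda(n)\,\lambda\big((n+a)(n+b)\big)+O\!\big(x^{1/2}\log x\big),
\]
where the error absorbs the contribution of the prime powers $n=p^{k}$, $k\ge2$, and then recover the unweighted sum by partial summation, which costs only a factor of $\log x$ and does not affect the shape of the bound. The analytic heart is the estimation of $\psi_{a,b}(x):=\sum_{n\le x}\Lambda(n)\lambda\big((n+a)(n+b)\big)$, for which I would invoke Hypothesis \ref{hyp2225P.560}: it should supply the meromorphic continuation and a classical zero-free region of the Dirichlet series $\sum_{n\ge1}\Lambda(n)\lambda((n+a)(n+b))\,n^{-s}$ attached to this correlation. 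Feeding this into Perron's formula and shifting the contour into the region $\sigma\ge 1-c'/\log(|t|+2)$, then optimizing the truncation height $T$ against the abscissa (the choice $\log T\asymp\sqrt{\log x}$), produces the savings $\exp\!\big(-c\sqrt{\log x}\big)$ in the usual way, which is exactly the source of the sharper error term relative to the $(\log x)^{-c}$ bounds.

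Two technical reductions then remain. First, the quadratic $(n+a)(n+b)$ must be stripped of its square part and of the residues where $n+a$ and $n+b$ share a common factor, which can only be a divisor of $b-a$, a fixed nonzero integer since $a\ne b$; these exceptional $n$ lie in $O(1)$ residue classes and contribute an amount negligible against $xe^{-c\sqrt{\log x}}$. Second, I would verify that the dependence on $a$ and $b$ enters only through the fixed modulus $b-a$ and a bounded set of local factors, so that, for fixed $a,b$, the constant $c$ may be taken absolute.

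The step I expect to be the main obstacle is the verification that Hypothesis \ref{hyp2225P.560} is genuinely available for $\lambda$ without further assumptions, equivalently that the correlation Dirichlet series above admits the claimed zero-free region. Unlike the single linear form, the bilinear (Type II) sums governing the cancellation in $\psi_{a,b}(x)$ involve the joint distribution of $\lambda$ at two linear forms, and it is exactly here that complete multiplicativity must be leveraged to fold the two forms into one quadratic and so recover an honest one-dimensional $L$-function estimate. Controlling, uniformly in the contour parameters, the off-diagonal terms arising from the square divisors of $(n+a)(n+b)$ is the delicate point on which the unconditional character of the claim ultimately rests.
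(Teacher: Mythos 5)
Your opening identity $\lambda(p+a)\lambda(p+b)=\lambda\big((p+a)(p+b)\big)$ is correct, and complete multiplicativity is indeed what separates $\lambda$ from $\mu$ here, but the argument you build on it has a genuine gap at its analytic core. Hypothesis \ref{hyp2225P.560} is not an $L$-function statement: it is a Bombieri--Vinogradov-type mean value, asserting that the correlations $\sum_{p\le z,\,p\equiv d\bmod q}\lambda(p+a)\lambda(p+b)$ are small on average over moduli $q\le x^{1/2}/\log^B x$, with a saving of only $(\log x)^{-C}$. It does not supply, and cannot be converted into, a meromorphic continuation or a classical zero-free region for the Dirichlet series $\sum_{n\ge1}\Lambda(n)\lambda((n+a)(n+b))\,n^{-s}$; there is no Euler product attached to a completely multiplicative function evaluated at a reducible quadratic, and obtaining such a zero-free region is essentially equivalent to a strong quantitative form of the two-point Chowla conjecture. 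Consequently the Perron/contour-shift step that is supposed to produce the $\exp(-c\sqrt{\log x})$ saving has no input to run on; you flag this yourself as ``the main obstacle'' but leave it unresolved, and with the hypothesis as actually stated the best saving it can yield is $(\log x)^{-C}$, not $e^{-c\sqrt{\log x}}$.

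For comparison, the paper's proof is far more pedestrian and does not use complete multiplicativity at all: it writes $\lambda(p+a)=\sum_{d^2\mid p+a}\mu((p+a)/d^2)$ and $\lambda(p+b)=\sum_{e^2\mid p+b}\mu((p+b)/e^2)$, splits the resulting double sum over $d,e$ at a threshold $x_0\le x^{1/8-\varepsilon}$, bounds the ranges with $d^2$ or $e^2$ large trivially by $O(x^{1-\varepsilon})$, and applies Hypothesis \ref{hyp2225P.560} to the remaining range, where the conditions $d^2\mid p+a$ and $e^2\mid p+b$ place $p$ in a residue class modulo $q=\lcm(d^2,e^2)\le x^{1/2-\varepsilon}$. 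Note that even the paper's own argument only extracts $O\left(x(\log x)^{-c}\right)$ from the hypothesis, not the $O\left(xe^{-c\sqrt{\log x}}\right)$ stated in the theorem, so the exponential error term is not delivered by either route. If you want to salvage your approach honestly, observe that the $q=1$ instance of Hypothesis \ref{hyp2225P.560} already bounds $\sum_{p\le x}\lambda(p+a)\lambda(p+b)$ by $x(\log x)^{-C}$ outright --- but that makes the theorem a near-tautology of the hypothesis and still falls short of the claimed exponential saving.
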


The essential foundational materials for the analysis of the Mobius function are covered in Section \ref{S2222MP}. The proof of Theorem \ref{thm1212MP.200} for the summatory Mobius function over the shifted primes is assembled in Subsection \ref{S8111MP}, and the proof for the Mobius autocorrelation function over the shifted primes is assembled in Subsection \ref{S2626MP}. 

The essential foundational materials for the analysis of the Liouville function are covered in Section \ref{S2299}.
The proof of Theorem \ref{thm1212LP.200} for the summatory Liouville function over the shifted primes is assembled in Subsection \ref{S1616LP}. The proof of Theorem \ref{thm1212LP.700} for the Liouville autocorrelation function over the shifted primes appears in Subsection \ref{S2626LP}\\

For completeness some results on the Mobius autocorrelation function and the Liouville autocorrelation function over the integers, based on the sign patterns technique, are included in Section \ref{S7979MN} and Section \ref{S7979LN} respectively. 


\section{Results for the Mobius Function over the Shifted Primes}\label{S2222MP}

\subsection{Average Orders of Mobius Functions}\label{S2222P}
\begin{thm} \label{thm2222.500} If $\mu: \N\longrightarrow \{-1,0,1\}$ is the Mobius function, then, for any large number $x>1$, the following statements are true.
	\begin{enumerate} [font=\normalfont, label=(\roman*)]
		\item $\displaystyle \sum_{n \leq x} \mu(n)=O \left (xe^{-c\sqrt{\log x}}\right )$, \tabto{8cm} unconditionally,
		\item $\displaystyle \sum_{n\leq x}\mu(n)=O\left( x^{1/2+\varepsilon} \right ), $ \tabto{8cm} conditional on the RH,
	\end{enumerate}where $c>0$ is an absolute constant, and $\varepsilon>0$ is an arbitrarily small number.
\end{thm}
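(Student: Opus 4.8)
The plan is to analyze $M(x) = \sum_{n \le x} \mu(n)$ through the Dirichlet series $\sum_{n \ge 1} \mu(n) n^{-s} = 1/\zeta(s)$, valid for $\Re(s) > 1$, together with the analytic properties of the zeta function. First I would invoke the truncated Perron formula to write
\[
M(x) = \frac{1}{2\pi i}\int_{\kappa - iT}^{\kappa + iT} \frac{x^s}{s\,\zeta(s)}\, ds + E(x,T),
\]
with $\kappa = 1 + 1/\log x$ and an explicit error $E(x,T) \ll x(\log x)/T$ arising from the tail of the integral and the boundary truncation. The principal feature of the main term is that it carries no residue at $s=1$: since $\zeta$ has a simple pole there, $1/\zeta$ vanishes at $s=1$, and the integrand $x^s/(s\zeta(s))$ is regular at that point. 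This is precisely why $M(x) = o(x)$ is to be expected rather than a linear main term.

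For part (i), I would shift the contour to the left of the line $\Re(s) = 1$, exploiting the classical de la Vall\'ee Poussin zero-free region $\sigma > 1 - c_0/\log(|t|+2)$, on which one has the uniform bound $1/\zeta(s) \ll \log(|t|+2)$. Deforming the contour to the boundary of this region and estimating the horizontal and vertical segments, the factor $x^{\sigma}$ on the shifted line contributes $x\exp\!\big(-c_1 \log x/\log T\big)$, to which the Perron error $x(\log x)/T$ must be added. Balancing these two competing terms forces the choice $\log T \asymp \sqrt{\log x}$, which yields the de la Vall\'ee Poussin estimate $M(x) \ll x\exp(-c\sqrt{\log x})$ and establishes statement (i).

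For part (ii), the Riemann Hypothesis places every nontrivial zero of $\zeta$ on the critical line $\Re(s) = 1/2$, so the integrand is holomorphic throughout the half-plane $\Re(s) > 1/2$. I would then push the contour to the vertical line $\Re(s) = 1/2 + \varepsilon$ and apply the RH-conditional growth bound $1/\zeta(s) \ll_{\varepsilon} |t|^{\varepsilon}$ valid there. On this line the factor $x^{1/2+\varepsilon}$ dominates, and after absorbing the logarithmic and polynomial-in-$T$ factors into a slightly enlarged $\varepsilon$ and taking $T$ to be a small power of $x$, the contour integral together with the controlled Perron error is $O\!\left(x^{1/2+\varepsilon}\right)$, which proves (ii).

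The main obstacle lies in part (i): extracting the sharp $e^{-c\sqrt{\log x}}$ saving requires both the explicit zero-free region and genuinely uniform upper bounds for $1/\zeta(s)$ throughout that region, followed by the delicate optimization of the two free parameters $T$ and the contour abscissa. By contrast, the conditional estimate in (ii) is comparatively routine once the RH-conditional bound for $1/\zeta$ on vertical lines to the right of the critical line is available, since no optimization of the zero-free region is needed.
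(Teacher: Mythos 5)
Your proposal is correct: it is the classical Perron-formula argument (contour shift into the de la Vall\'ee Poussin zero-free region for (i), and to $\Re(s)=1/2+\varepsilon$ under RH for (ii)), which is precisely the proof contained in the references the paper cites in lieu of giving its own argument. No discrepancy to report.
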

\begin{proof}[\textbf{Proof}]  See \cite[p.\ 6]{DL2012}, \cite[p.\ 182]{MV2007}, \cite[p.\ 347]{HW2008}, et alii.   
\end{proof}

There are many sharp bounds of the summatory function of the Mobius function, say, $O(xe^{-c(\log x)^{\delta}})$, and the conditional estimate $O(x^{1/2+\varepsilon})$ presupposes that the nontrivial zeros of the zeta function $ \zeta(\rho)=0$ in the critical strip $\{0<\Re e(s)<1 \}$ are of the form $\rho=1/2+it, t \in \mathbb{R}$. However, the simpler notation will be used whenever it is convenient.

\begin{thm} \label{thmMF222.090} Let $x\geq1$ be a large number, and let $q\ll(\log x)^B$, where $B\geq0$ is an arbitrary constant. If $1\leq a<q$ are relatively prime integers, then, 
	\begin{equation}
		\sum_{\substack{n \leq x\\n\equiv a \bmod q}} \mu(n)=O \left (\frac{x}{\log^{C}x}\right )\nonumber, 
	\end{equation}
	where $C=C(B)>0$ is a constant. 
\end{thm}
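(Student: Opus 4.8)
The plan is to establish the Möbius analogue of the Siegel--Walfisz theorem. The congruence $n \equiv a \bmod q$ is detected by the orthogonality relations for the Dirichlet characters modulo $q$, which give
\[
\sum_{\substack{n \le x \\ n \equiv a \bmod q}} \mu(n) = \frac{1}{\phi(q)} \sum_{\chi \bmod q} \overline{\chi}(a)\, M(x,\chi), \qquad M(x,\chi) := \sum_{n \le x} \mu(n)\chi(n).
\]
This reduces the problem to bounding the twisted sums $M(x,\chi)$ uniformly in $\chi$ for $q \ll (\log x)^{B}$. Since there are $\phi(q) \le q \ll (\log x)^{B}$ characters and the prefactor is $1/\phi(q)$, any bound of the shape $M(x,\chi)=O(x/\log^{C'}x)$ that is uniform in $\chi$ propagates to the full sum with the same quality of error term, after enlarging the constant to absorb the contribution of $q$.

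First I would dispose of the principal character $\chi_0$. Here $M(x,\chi_0)=\sum_{n\le x,\,\gcd(n,q)=1}\mu(n)$, whose generating Dirichlet series is $\zeta(s)^{-1}\prod_{p\mid q}(1-p^{-s})^{-1}$. Because the finite Euler product over $p\mid q$ is bounded near $\Re(s)=1$ when $q$ is polylogarithmic, this term is controlled by the unconditional estimate $\sum_{n\le x}\mu(n)=O(xe^{-c\sqrt{\log x}})$ of Theorem \ref{thm2222.500}(i), together with a routine removal of the integers divisible by primes $p\mid q$. The principal character therefore lies comfortably within the claimed bound.

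The substance of the argument is the bound for non-principal $\chi$. I would apply a truncated Perron formula to $L(s,\chi)^{-1}=\sum_{n}\mu(n)\chi(n)n^{-s}$,
\[
M(x,\chi) = \frac{1}{2\pi i}\int_{\kappa - iT}^{\kappa + iT} \frac{1}{L(s,\chi)}\,\frac{x^{s}}{s}\,ds + O\!\left(\frac{x\log x}{T}\right),
\]
with $\kappa = 1 + 1/\log x$, and then shift the contour leftwards into the classical zero-free region $\sigma > 1 - c_{0}/\log\bigl(q(|t|+2)\bigr)$ of $L(s,\chi)$. Standard upper bounds for $L(s,\chi)^{-1}$ in this region (obtained from estimates on $\log L(s,\chi)$) render the shifted vertical contour and the horizontal segments negligible, and optimizing $T=\exp(\sqrt{\log x})$ yields $M(x,\chi)=O\bigl(x\exp(-c_{1}\sqrt{\log x})\bigr)$, which is stronger than required.

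The main obstacle is the possible Siegel (exceptional) real zero $\beta$ of $L(s,\chi)$ for a real character $\chi$, which may lie inside the nominal zero-free region and obstruct the contour shift. To handle it uniformly in $q$ I would invoke Siegel's theorem: for every $\varepsilon>0$ there is an ineffective constant $c_{\varepsilon}>0$ with $\beta < 1 - c_{\varepsilon}\,q^{-\varepsilon}$. For $q \ll (\log x)^{B}$ this forces $1-\beta \gg_{\varepsilon}(\log x)^{-\varepsilon B}$, so the contribution of the simple pole of $L(s,\chi)^{-1}$ at $s=\beta$ is $\ll x^{\beta}(\log x)^{O(1)} = x\exp\!\bigl(-(1-\beta)\log x\bigr)(\log x)^{O(1)} \ll x\exp\!\bigl(-c_{\varepsilon}(\log x)^{1-\varepsilon B}\bigr)$; choosing $\varepsilon < 1/B$ keeps this well within $O(x/\log^{C}x)$ for any prescribed $C$, at the price of the ineffectivity of $C=C(B)$. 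Only the single real character carrying an exceptional zero is affected, while all other characters obey the exponential bound above, so the worst case is the polylogarithmic saving claimed in the statement. This exceptional-zero analysis, and its uniformity over the polylogarithmic range of $q$, is precisely where the argument is delicate; the remainder is the standard contour-shifting apparatus.
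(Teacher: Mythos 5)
Your argument is the standard Siegel--Walfisz theorem for the M\"obius function (orthogonality of characters, Perron's formula with a contour shift into the classical zero-free region, and Siegel's theorem to neutralize a possible exceptional zero when $q\ll(\log x)^{B}$), and it is essentially correct, including your identification of the exceptional-zero contribution as the source of the ineffective constant $C(B)$. The paper offers no proof of its own here --- it only points to the sketch in Montgomery--Vaughan, p.~385 --- and that sketch is precisely the argument you have reconstructed, so your proposal matches the intended route.
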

\begin{proof}[\textbf{Proof}]A sketch of the proof appears in \cite[p.\ 385]{MV2007}.
\end{proof}

\subsection{Twisted Exponential Sums}
One of the earliest result for twisted sums is stated below.
\begin{thm} \label{thm3970M.300} {\normalfont (\cite{DH1937})} If $\alpha$ is a real number, and $D>0$ is an arbitrary constant, then
	\begin{equation}\label{eq3970M.310} 
		\sup_{\alpha\in\R}\sum_{n \leq x} \mu(n)e^{i 2 \pi  \alpha n}<\frac{c_1x}{(\log x)^{D}} \nonumber,
	\end{equation}
	where $c_1=c_1(D)>0$ is a constant depending on $D$, as the number $x \to \infty$.
\end{thm}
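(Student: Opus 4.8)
The plan is to prove the bound uniformly in $\alpha$ by means of the Hardy--Littlewood circle method, handling rational points of small denominator (major arcs) with the Siegel--Walfisz estimate of Theorem \ref{thmMF222.090} and the remaining $\alpha$ (minor arcs) with a bilinear decomposition of $\mu$. Throughout write $e(t)=e^{i2\pi t}$ and $S(\alpha)=\sum_{n\le x}\mu(n)e(\alpha n)$, and fix a threshold $Q=x(\log x)^{-B}$, where $B=B(D)$ will be taken large at the end. By Dirichlet's approximation theorem, every real $\alpha$ can be written as $\alpha=a/q+\beta$ with $\gcd(a,q)=1$, $1\le q\le Q$, and $|\beta|\le 1/(qQ)$. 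Since the asserted estimate must hold for every $\alpha$ at once, it suffices to bound $|S(\alpha)|$ separately in the two regimes $q\le(\log x)^{B}$ and $(\log x)^{B}<q\le Q$. Davenport's original argument used Vinogradov's method, but the cleanest modern route runs through Vaughan's identity, and that is the path I would follow.

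First I would treat the major arcs $q\le(\log x)^{B}$. Splitting the sum into residue classes modulo $q$ and using that $e(an/q)$ is constant on each class gives
\[
S(\alpha)=\sum_{r \bmod q} e(ar/q)\sum_{\substack{n\le x\\ n\equiv r \bmod q}}\mu(n)e(\beta n).
\]
Partial summation in the variable $n$ removes the factor $e(\beta n)$ at the cost of $(1+|\beta|x)$ and reduces each inner sum to the partial sums $\sum_{n\le t,\,n\equiv r \bmod q}\mu(n)$, $t\le x$. Theorem \ref{thmMF222.090} bounds these by $O\!\left(x(\log x)^{-C}\right)$ uniformly for $q\ll(\log x)^{B}$ (the residue classes with $\gcd(r,q)>1$ reduce to the coprime case over a sublattice and contribute no more). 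Since $|\beta|x\le x/Q=(\log x)^{B}$ and there are $q\le(\log x)^{B}$ classes, summing yields $|S(\alpha)|\ll x(\log x)^{2B-C}$, which is $O(x(\log x)^{-D})$ once $C=C(B)$ is taken large enough.

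Next I would treat the minor arcs $(\log x)^{B}<q\le Q$ by Vaughan's identity, which expresses $S(\alpha)$ as $O(\log x)$ sums of Type I form $\sum_{d\le y}a_d\sum_{m}e(\alpha dm)$ and Type II form $\sum_{d}\sum_{m}b_d c_m e(\alpha dm)$, with suitable truncation parameters and coefficients bounded by divisor functions. The Type I sums are estimated by evaluating the inner geometric progression through $\sum_m e(\alpha dm)\ll\min(x/d,\|\alpha d\|^{-1})$ and summing in $d$; the Type II sums are estimated after Cauchy--Schwarz, reducing to a double exponential sum in which the diagonal and off-diagonal terms are controlled by the same $\min(\cdot,\|\cdot\|^{-1})$ bound together with $|\beta|\le1/(qQ)$. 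The standard outcome is
\[
S(\alpha)\ll\left(\frac{x}{\sqrt q}+x^{4/5}+\sqrt{xq}\right)(\log x)^{4}.
\]
In the range $(\log x)^{B}<q\le x(\log x)^{-B}$ each term is $\ll x(\log x)^{4-B/2}$, again $O(x(\log x)^{-D})$ for $B$ large, and combining the two regimes finishes the proof.

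The main obstacle I anticipate is the Type II bilinear sum on the minor arcs, and specifically the fact that the bound must be uniform in $\alpha$. After Cauchy--Schwarz one must show that the contributions of the terms $\|\alpha d\|^{-1}$, summed over $d$ in dyadic ranges, collapse to the claimed power of $q$ for \emph{every} real $\alpha$; this is exactly where the rational approximation $|\alpha-a/q|\le1/(qQ)$ and a careful counting of the $d$ for which $\|\alpha d\|$ is small must be deployed. The major-arc step, by contrast, is essentially an application of Theorem \ref{thmMF222.090} plus partial summation, and the choice of the single threshold $Q=x(\log x)^{-B}$ is what makes the two estimates overlap and cover all $\alpha$.
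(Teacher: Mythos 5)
The paper offers no proof of this statement at all: Theorem \ref{thm3970M.300} is recorded purely as a citation to Davenport's 1937 paper, so there is nothing internal to compare your argument against. Your outline is the standard modern route to Davenport's bound (major/minor arc dissection at a threshold $Q=x(\log x)^{-B}$, Siegel--Walfisz as in Theorem \ref{thmMF222.090} plus partial summation on the major arcs, Vaughan's identity with Type I/Type II estimates on the minor arcs), and its structure is correct: the exponents bookkeep properly, since $q\le(\log x)^{B}$ classes times the partial-summation factor $1+|\beta|x\le 1+(\log x)^{B}$ times the Siegel--Walfisz saving $(\log x)^{-C}$ gives $x(\log x)^{2B-C}$, and on the minor arcs $(\log x)^{B}<q\le x(\log x)^{-B}$ the three terms $xq^{-1/2}$, $x^{4/5}$, $(xq)^{1/2}$ are each $\ll x(\log x)^{4-B/2}$. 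This differs from Davenport's original argument, which predates Vaughan's identity and ran through Vinogradov's method together with what was then the newly available Siegel--Walfisz theorem; the Vaughan route you describe is cleaner and is the one found in the standard references (e.g.\ Iwaniec--Kowalski, Theorem 13.9, or Davenport's \emph{Multiplicative Number Theory}, Chapter 25). The one place where your write-up asserts rather than proves is the displayed minor-arc bound $S(\alpha)\ll(xq^{-1/2}+x^{4/5}+(xq)^{1/2})(\log x)^{4}$; you correctly identify the Type II sum and its uniformity in $\alpha$ as the genuine content there, and a complete write-up would have to carry out the Cauchy--Schwarz step and the counting of $d$ with $\|\alpha d\|$ small using $|\alpha-a/q|\le 1/(qQ)\le 1/q^{2}$. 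Since that estimate is classical and fully documented, your proposal is an acceptable and essentially complete proof sketch, and it supplies an argument where the paper supplies only a reference.
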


\subsection{Mean Values and  Hypotheses over the Shifted Primes }\label{S2225P}
There are several mean values and equidistribution results for arithmetic functions over arithmetic progressions of level of distribution $\theta<1/2$. The best known case is the Bombieri-Vinogradov theorem, see \cite[Theorem 15.4]{DL2012}, the case for the Mobius function is proved in \cite[Theorem 1]{WD1973} and \cite{SW1971} states the following.

\begin{cor} {\normalfont (\cite[Corollary 1]{SW1971} }\label{cor2225P.550}  Let $a\geq 1$ be a fixed parameter, and let $x \geq 1$ be a large number. If $C>0$ is a constant, then
	\begin{equation}
		\sum_{q\leq x^{1/2}/\log^B x} \max_{a \bmod q}\max_{z \leq x}  \bigg |\sum_{\substack{n \leq z \\
				n \equiv a \bmod q}} \mu(n) \bigg |\ll \frac{x}{(\log x)^{C}},
	\end{equation}where the constant $B>0$ depends on $C$.
\end{cor}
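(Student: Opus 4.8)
The plan is to establish this as a Bombieri--Vinogradov theorem with the von Mangoldt function replaced by $\mu$, via the large sieve. The starting point is orthogonality of Dirichlet characters: the maximum over $a \bmod q$ reduces to the coprime case by a standard reduction (pulling out $\gcd(a,q)$), and for $(a,q)=1$ one writes
\[
\sum_{\substack{n \leq z \\ n \equiv a \bmod q}} \mu(n) = \frac{1}{\phi(q)} \sum_{\chi \bmod q} \bar\chi(a)\, M(z,\chi), \qquad M(z,\chi) := \sum_{n \leq z} \mu(n)\chi(n).
\]
After the triangle inequality (using $|\bar\chi(a)|\le 1$) the whole double sum is dominated by $\sum_{q\le Q}\frac{1}{\phi(q)}\sum_{\chi\bmod q}\max_{z\le x}|M(z,\chi)|$, where $Q=x^{1/2}/(\log x)^B$. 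Since each $\chi \bmod q$ is induced by a unique primitive character of conductor $f\mid q$, and replacing $\chi$ by its primitive inductor alters $M(z,\chi)$ only by a divisor-controlled error, I would reorganize this as a sum over primitive characters of conductor $f\le Q$, each carrying the weight $\sum_{f\mid q,\ q\le Q}1/\phi(q)\ll (\log x)/\phi(f)$.

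For the small conductors $f\le(\log x)^{D}$ (with $D$ chosen in terms of $C$), I would invoke the Siegel--Walfisz phenomenon for $\mu$: the classical zero-free region for $L(s,\chi)$, together with Siegel's theorem to exclude an exceptional real zero, yields $\max_{z\le x}|M(z,\chi)|\ll x\exp(-c_1\sqrt{\log x})$ uniformly over all primitive $\chi$ of conductor $f\le(\log x)^{D}$. Multiplying by the number of such characters and the weight $(\log x)/\phi(f)$ leaves a total of $\ll x\exp(-c_2\sqrt{\log x})$, which beats $x/(\log x)^{C}$ for every fixed $C$.

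For the large conductors $(\log x)^{D}<f\le Q$, the zero-free region is too weak, and I would switch to the large sieve. The essential device is a Vaughan-type identity for $1/\zeta(s)$ that decomposes $M(z,\chi)$ into Type I sums $\sum_{d\le U}\mu(d)\chi(d)\sum_{m}\chi(m)$ and Type II bilinear sums $\sum_{U<d\le V}\sum_{m} a_d b_m \chi(dm)$, with coefficients bounded by divisor functions. Applying the multiplicative large sieve inequality
\[
\sum_{f\le Q}\frac{f}{\phi(f)}\ \sideset{}{^{\ast}}\sum_{\chi\bmod f}\Big|\sum_{n\le N} c_n\chi(n)\Big|^{2}\ \ll\ (Q^{2}+N)\sum_{n\le N}|c_n|^{2}
\]
to each piece --- handling the Type I inner sum trivially and the Type II sum by Cauchy--Schwarz in the two variables, with the maximum over $z\le x$ absorbed by a maximal version of the large sieve (Gallagher's lemma) --- yields, after dividing out the $\phi$-weights, a bound of shape $(Q^{2}+x)^{1/2}x^{1/2}(\log x)^{O(1)}$. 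Taking $Q=x^{1/2}/(\log x)^{B}$ keeps $Q^{2}\le x/(\log x)^{2B}$, and choosing $B$ large relative to $C$ delivers the required saving.

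The main obstacle is the Type II estimate at level of distribution $\theta=1/2$: one must split the bilinear ranges $U,V$ so that both variables are long enough to balance the $N$-term against the $Q^{2}$-term in the large sieve, while keeping the accumulated powers of $\log x$ small enough that the residual $(\log x)^{O(1)}$ is dominated by the gain from shrinking $Q$ by $(\log x)^{B}$. The orthogonality reduction, the primitive-character bookkeeping, and the Siegel--Walfisz input are routine; the real work lies in the precise choice of $U,V$ and in verifying that the divisor-bounded coefficients satisfy $\sum_{n\le N}|c_n|^{2}\ll N(\log N)^{O(1)}$.
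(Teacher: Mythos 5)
The paper does not prove this statement at all: it is quoted verbatim as \cite[Corollary 1]{SW1971} (see also \cite[Theorem 1]{WD1973}), so there is no internal argument to compare against. Your plan is the standard Bombieri--Vinogradov machinery adapted to $\mu$ --- reduction to coprime classes, orthogonality, passage to primitive characters, Siegel--Walfisz for conductors up to $(\log x)^{D}$, and a Vaughan-type decomposition of $1/\zeta$ fed into the multiplicative large sieve for the remaining conductors --- and this is essentially the route taken in the cited literature (and in the modern treatments, e.g.\ Iwaniec--Kowalski, Ch.~17). So the architecture is right and matches the source the paper leans on.

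One concrete slip needs repair: the final bound you quote, of shape $(Q^{2}+x)^{1/2}x^{1/2}(\log x)^{O(1)}$, gives \emph{no} saving, since with $Q=x^{1/2}/(\log x)^{B}$ one has $(Q^{2}+x)^{1/2}x^{1/2}\asymp x$. The gain in Bombieri--Vinogradov does not come from shrinking the $Q^{2}$ term inside that bracket. After the dyadic decomposition in the conductor $f\sim F$ and the replacement $1/\phi(f)\le F^{-1}\cdot f/\phi(f)$, the bilinear large sieve applied to a Type II form with ranges $D\cdot M\asymp x$ yields
\begin{equation}
\frac{1}{F}\left(F^{2}+D\right)^{1/2}\left(F^{2}+M\right)^{1/2}x^{1/2}(\log x)^{O(1)}
\ \ll\ \left(Fx^{1/2}+\bigl(D^{1/2}+M^{1/2}\bigr)x^{1/2}+\frac{x}{F}\right)(\log x)^{O(1)},\nonumber
\end{equation}
and the three terms are controlled respectively by $F\le Q$ (giving $x/(\log x)^{B}$), by the choice of $U,V$ in the Vaughan ranges (giving $x/U^{1/2}$), and by the lower cutoff $F\ge(\log x)^{D}$ (giving $x/(\log x)^{D}$). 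It is the cross term $Qx^{1/2}$, not $(Q^{2}+x)^{1/2}x^{1/2}$, that delivers the $(\log x)^{-B}$ saving; with that correction, and the balancing of $U,V$ you already flag as the real work, the argument closes.
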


However, there is no literature on the mean values and equidistribution for arithmetic functions over arithmetic progressions of shifted prime. A comparable result is expected to hold.

\begin{hyp} \label{hyp2225P.550}  Let $a\geq 1$ be a fixed parameter, and let $x \geq 1$ be a large number. If $C>0$ is a constant, then
\begin{enumerate} [font=\normalfont, label=(\roman*)]
\item $\displaystyle \sum_{q\leq x^{1/2}/\log^B x} \max_{a \bmod q}\max_{z \leq x}  \bigg |\sum_{\substack{p \leq z \\
		p \equiv d \bmod q}} \lambda(p+a) \bigg |\ll \frac{x}{(\log x)^{C}},$
\item $\displaystyle \sum_{q\leq x^{1/2}/\log^B x} \max_{a \bmod q}\max_{z \leq x}  \bigg |\sum_{\substack{p \leq z \\
		p \equiv d \bmod q}} \mu(p+a) \bigg |\ll \frac{x}{(\log x)^{C}}, $ 
\end{enumerate}
where the constant $B>0$ depends on $C$.
\end{hyp}
 
A similar mean value for the autocorrelation over arithmetic progression is of interest in the theory of arithmetic correlation functions.
\begin{hyp} \label{hyp2225P.560}  Let $a,b\in \Z$ be a fixed pair of small integers such that $a\ne b$, and let $x \geq 1$ be a large number. If $C>0$ is a constant, then
\begin{enumerate} [font=\normalfont, label=(\roman*)]
	\item $\displaystyle \sum_{q\leq x^{1/2}/\log^B x} \max_{d \bmod q}\max_{z \leq x}  \bigg |\sum_{\substack{p \leq z \\
			p \equiv d \bmod q}} \lambda(p+a) \lambda(p+b)\bigg |\ll \frac{x}{(\log x)^{C}},$
	\item $\displaystyle \sum_{q\leq x^{1/2}/\log^B x} \max_{d \bmod q}\max_{z \leq x}  \bigg |\sum_{\substack{p \leq z \\
			p \equiv d \bmod q}} \mu(p+a) \mu(p+b)\bigg |\ll \frac{x}{(\log x)^{C}}, $ 
\end{enumerate}
where the constant $B>0$ depends on $C$.
\end{hyp}

\subsection{Mobius Function over the Shifted Primes}\label{S8111MP}
The analysis of the average order over the shifted primes
\begin{equation}\label{eq1616MP.200A}
	R(a,x)=	\sum_{p \leq x} \mu(p+a)
\end{equation}
is currently viewed as an intractable problem, the restricted double average order 
\begin{equation}\label{eq1616MP.210A}
	\sum_{a \leq z}\sum_{p \leq x} \mu(p+a)=o\left( z\pi(x)\right) 
\end{equation}
is the only result available in the literature, see \cite[Theorem 1.1]{LJ2021} for the exact details. An asymptotic formula for the Mobius function over the shifted primes is considered in this section. Theorem \ref{thm1212MP.200} is the same as the result proved below.

\begin{thm}\label{thm8111MP.500}  Let $x\geq 1$ be a large number, and let $\mu: \mathbb{N} \longrightarrow \{-1,0,1\}$ be the Mobius function. If $a\ne0$ is a fixed integer, then, 
	\begin{equation}\label{eq8111MP.500}
		\sum_{p \leq x}\mu(p+a)=O\left( \frac{x}{(\log x)^c}\right) ,\nonumber
	\end{equation}
	where $c> 0$ is an arbitrary constant.
\end{thm}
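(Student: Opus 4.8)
The plan is to obtain the estimate directly from Hypothesis~\ref{hyp2225P.550}(ii) by isolating the single modulus $q=1$. The outer sum over $q$ in that hypothesis is a sum of nonnegative quantities, so it dominates any one of its terms; in particular it dominates the term corresponding to $q=1$. For $q=1$ the congruence $p \equiv d \bmod 1$ holds for every prime, so the inner sum collapses to the unrestricted sum $\sum_{p \leq z} \mu(p+a)$, and the maximum over residues modulo $1$ is vacuous. Thus the $q=1$ contribution equals $\max_{z \leq x}\big|\sum_{p \leq z}\mu(p+a)\big|$.

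Concretely, first I would fix the arbitrary constant $c>0$ appearing in the statement and apply Hypothesis~\ref{hyp2225P.550}(ii) with $C=c$, which furnishes an admissible $B=B(c)$ together with the bound $\ll x/(\log x)^{c}$ for the full sum over $q$. Retaining only the $q=1$ summand on the left-hand side gives
\begin{equation}
\max_{z \leq x}\bigg|\sum_{p \leq z}\mu(p+a)\bigg| \;\ll\; \frac{x}{(\log x)^{c}}. \nonumber
\end{equation}
Specializing $z=x$ inside the maximum then yields
\begin{equation}
\bigg|\sum_{p \leq x}\mu(p+a)\bigg| \;\le\; \max_{z \leq x}\bigg|\sum_{p \leq z}\mu(p+a)\bigg| \;\ll\; \frac{x}{(\log x)^{c}}, \nonumber
\end{equation}
which is exactly the claimed estimate, since $c>0$ was arbitrary.

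It should be said plainly that there is no genuine obstacle at the level of this deduction: all of the arithmetic difficulty has been packed into Hypothesis~\ref{hyp2225P.550}, which is the shifted-prime analogue of the Bombieri--Vinogradov-type Corollary~\ref{cor2225P.550} for $\mu(n)$ over arithmetic progressions. The truly hard step, which this argument assumes rather than proves, is the cancellation of $\mu(p+a)$ on average over moduli $q$ up to $x^{1/2}/\log^{B}x$; establishing such a level of distribution for a multiplicative function evaluated at shifted primes lies beyond the reach of current techniques, and this is precisely why Theorem~\ref{thm8111MP.500} (equivalently Theorem~\ref{thm1212MP.200}) must be stated conditionally.
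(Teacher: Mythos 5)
Your deduction is correct: the sum over $q$ in Hypothesis \ref{hyp2225P.550}(ii) is a sum of nonnegative terms, its $q=1$ term is exactly $\max_{z\leq x}\left|\sum_{p\leq z}\mu(p+a)\right|$, and specializing $z=x$ gives the theorem. However, this is a genuinely different route from the paper's. The paper does not touch part (ii) at all; it writes $\mu(p+a)=\lambda(p+a)\mu^2(p+a)$, expands $\mu^2(p+a)=\sum_{d^2\mid p+a}\mu(d)$, splits the moduli at $x_0=x^{1/2-\varepsilon}$, applies Hypothesis \ref{hyp2225P.550}(i) (the Liouville part) to the moduli $d^2\leq x_0$, and bounds the tail $x_0<d^2\leq x$ trivially by $O(x^{1/2+\varepsilon})$. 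What the paper's route buys is that only the Liouville half of the hypothesis is consumed, and it exhibits the transfer mechanism between $\lambda$ and $\mu$ that recurs throughout the paper. What your route buys is economy and, more importantly, a structural observation worth stating explicitly: since the $q=1$ summand of Hypothesis \ref{hyp2225P.550}(ii) already \emph{is} the conclusion of the theorem (up to the harmless $\max_{z\leq x}$), the conditional statement is an immediate corollary of the hypothesis rather than a theorem requiring any argument — the hypothesis is vastly stronger than what is being deduced from it. Your closing caveat is also well placed: all of the content lives in the unproved level-of-distribution assumption, not in the deduction.
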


\begin{proof} Let $x_0=x^{1/2-\varepsilon}$, with $\varepsilon>0$. Substitute the identity $\mu(n)\lambda(n)\mu^2(n)$, and partition the equivalent finite sum.
\begin{eqnarray}\label{eq8111MP.520}
	\sum_{p \leq x}\mu(p+a)&=&\sum_{p \leq x}\lambda(p+a)\mu^2(p+a)\\
	&=&\sum_{p \leq x}\lambda(p+a)\sum_{d^2\mid p+a}\mu(d)\nonumber\\
	&=&\sum_{d^2\leq x}\mu(d)\sum_{\substack{p \leq x\\d^2\mid p+a}}\lambda(p+a)\nonumber\\
	&=&\sum_{d^2\leq x_0}\mu(d)\sum_{\substack{p \leq x\\d^2\mid p+a}}\lambda(p+a)+\sum_{x_0<d^2\leq x}\mu(d)\sum_{\substack{p \leq x\\d^2\mid p+a}}\lambda(p+a)\nonumber.
\end{eqnarray}	
By Hypothesis \ref{hyp2225P.550}, the first term has the upper bound
\begin{eqnarray}\label{eq8111MP.530}
\sum_{d^2\leq x_0}\mu(d)\sum_{\substack{p \leq x\\d^2\mid p+a}}\lambda(p+a)&\leq&\sum_{d^2\leq x_0}\bigg | \sum_{\substack{p \leq x\\p\equiv -a\bmod d^2}}\lambda(p+a)\bigg |\\
	&=&O\left(\frac{x}{(\log x)^{c}}\right) \nonumber,
\end{eqnarray}	
where $c>1$ is an arbitrary constant. The second term has the upper bound
\begin{eqnarray}\label{eq8111MP.540}
\sum_{x_0<d^2\leq x}\mu(d)\sum_{\substack{p \leq x\\d^2\mid p+a}}\lambda(p+a)&\leq&\sum_{x_0<d^2\leq x}\sum_{\substack{p \leq x\\p\equiv -a\bmod d^2}}1\\
	&=&O\left( x^{1/2+\varepsilon}\right) \nonumber.
\end{eqnarray}	
The sum of \eqref{eq8111MP.530} and \eqref{eq8111MP.540} completes the verification.
\end{proof}

\subsection{Squarefree Shifted Primes}\label{S8009MP}
Let $p\geq2$ be  prime, and let $a\ne0$ be a fixed integer. The number of squarefree integers of the form $p+a$, or totient integers, have the following asymptotic formula expressed in terms of the logarithm integral $\li(x)=\int_2^x(\log t)^{-1}dt$.

\begin{thm}\label{thm8009MP.350A}  Let $x\geq 1$ be a large number, and let $\mu: \mathbb{N} \longrightarrow \{-1,0,1\}$ be the Mobius function. If $a\ne0$ is a fixed integer, then, 
	\begin{equation}\label{eq8009MP.350A}
		\sum_{p \leq x}\mu(p+a)^2=s_0\li(x)+O\left( \frac{x}{(\log x)^c}\right) ,\nonumber
	\end{equation}
	where $s_0= 0.373955\ldots $ is a constant, and $c> 1$ is an arbitrary constant.
\end{thm}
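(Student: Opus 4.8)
The plan is to detect the squarefree condition on $p+a$ with the convolution identity $\mu^2(n)=\sum_{d^2\mid n}\mu(d)$, and thereby reduce the problem to counting primes in arithmetic progressions to square moduli. Discarding the finitely many primes $p\le|a|$ (which contribute $O(1)$) and writing $\pi(x;q,r)$ for the number of primes $p\le x$ with $p\equiv r\bmod q$, an interchange of summation gives
\begin{equation}
\sum_{p\le x}\mu(p+a)^2=\sum_{d^2\le x+|a|}\mu(d)\,\pi(x;d^2,-a).\nonumber
\end{equation}
First I would isolate the terms with $\gcd(d,a)>1$: for such $d$ the residue class $-a\bmod d^2$ is not coprime to $d^2$, so it contains at most one prime, and the total contribution of these terms is $O(\sqrt{x})$, hence negligible.

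Next I would fix a threshold $D=(\log x)^{B}$, with $B\ge c$ to be chosen at the end, and split the remaining sum at $d=D$. On the range $d\le D$, where $\gcd(d,a)=1$, the modulus $d^2\le(\log x)^{2B}$ is a fixed power of the logarithm, so the Siegel--Walfisz form of the prime number theorem in arithmetic progressions applies uniformly and yields
\begin{equation}
\pi(x;d^2,-a)=\frac{\li(x)}{\phi(d^2)}+O\!\left(xe^{-c_1\sqrt{\log x}}\right).\nonumber
\end{equation}
Summing against $\mu(d)$ produces the main term $\li(x)\sum_{d\le D,\,\gcd(d,a)=1}\mu(d)/\phi(d^2)$, while the accumulated error is $O\!\left(D\,xe^{-c_1\sqrt{\log x}}\right)=O\!\left(xe^{-c_2\sqrt{\log x}}\right)$. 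Completing the truncated series to the full sum, and using $\phi(d^2)=d\,\phi(d)\gg d^2/\log\log d$ to bound the tail by $\sum_{d>D}\phi(d^2)^{-1}\ll(\log\log x)/D$, identifies the leading constant as the convergent Euler product
\begin{equation}
s_0=\sum_{\substack{d\ge1\\\gcd(d,a)=1}}\frac{\mu(d)}{\phi(d^2)}=\prod_{p\nmid a}\left(1-\frac{1}{p(p-1)}\right),\nonumber
\end{equation}
which for $a=\pm1$ is Artin's constant $0.373955\ldots$; the error committed in this completion is $O\!\left(\li(x)(\log\log x)/D\right)=O\!\left(x/(\log x)^{c}\right)$.

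The one place where cancellation from $\mu(d)$ is unavailable, and hence the crux of the error analysis, is the tail range $D<d\le\sqrt{x+|a|}$. There I would fall back on the trivial bound $\pi(x;d^2,-a)\le x/d^2+1$, which gives
\begin{equation}
\Bigg|\sum_{D<d\le\sqrt{x+|a|}}\mu(d)\,\pi(x;d^2,-a)\Bigg|\le x\sum_{d>D}\frac{1}{d^2}+\sqrt{x+|a|}\ll\frac{x}{D}+\sqrt{x}.\nonumber
\end{equation}
With $D=(\log x)^{B}$ and $B\ge c$ this is $O\!\left(x/(\log x)^{c}\right)$, and it is precisely this term that fixes the quality of the bound: since the modulus $d^2$ already climbs to $x$ while $D$ is confined to a power of $\log x$ (the ceiling imposed by Siegel--Walfisz), the elementary argument yields no power saving. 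Collecting the three contributions gives $\sum_{p\le x}\mu(p+a)^2=s_0\li(x)+O(x/(\log x)^{c})$. I expect the only real difficulty to be bookkeeping rather than conceptual: the single threshold $D$ must simultaneously keep the Siegel--Walfisz modulus admissible, render the series tail negligible, and control the trivial-bound remainder, all of which force $B\ge c$. Should a sharper error be wanted, one could replace the trivial bound on the middle range $D<d\le x^{1/4-\varepsilon}$ by the Bombieri--Vinogradov theorem for primes, but this refinement is unnecessary for the stated estimate.
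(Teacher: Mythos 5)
Your proposal is correct and follows essentially the same route as the paper: detect squarefreeness of $p+a$ via $\mu^2(n)=\sum_{d^2\mid n}\mu(d)$, interchange summation, cut at a power of $\log x$ so that Siegel--Walfisz applies to the small moduli, and bound the large-$d$ tail trivially by $x\sum_{d^2>x_0}d^{-2}$. The one substantive point where you go beyond the paper is your separate treatment of the moduli with $\gcd(d,a)>1$: you correctly observe that for such $d$ the class $-a \bmod d^2$ is not reduced, contains at most one prime, and must be excluded from the main term, so that the leading constant is really $\prod_{\ell\nmid a}\bigl(1-\tfrac{1}{\ell(\ell-1)}\bigr)$ rather than the full Artin product $s_0=0.373955\ldots$, the two agreeing only when $a=\pm1$. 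The paper's proof applies the Siegel--Walfisz asymptotic $\li(x)/\varphi(d^2)$ to every $d\le x_0$ without this coprimality check, so its stated constant is slightly off for $|a|>1$; your version is the accurate one, and the error term you obtain is the same.
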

\begin{proof}[\textbf{Proof}] Substituting the identity $\mu(n)^2=\sum_{d^2\mid n}\mu(d)$, and switching the order of summation yield
	\begin{eqnarray}\label{eq8009MP.360A}
		\sum_{p \leq x}\mu(p+a)^2 
		&=&\sum_{p \leq x} \sum_{d^2\mid p+a}\mu(d)\\
		&=&\sum_{d^2 \leq x}\mu(d) \sum_{\substack{p\leq x\\d^2\mid p+a}}1\nonumber\\
		&=&\sum_{d^2 \leq x_0}\mu(d) 
		\sum_{\substack{p\leq x\\d^2\mid p+a}}1+
		\sum_{x_0< d^2 \leq x}\mu(d) 
		\sum_{\substack{p\leq x\\d^2\mid p+a}}1\nonumber,
	\end{eqnarray}
	where $x_0=(\log x)^{2c}$, and $c>1$ is a constant. Applying the Siegel-Walfisz theorem, see \cite[p.\ 405]{FI2010}, \cite[Theorem 15.3]{DL2012}, et cetera, to the first subsum in the partition yields
	\begin{eqnarray}\label{eq8009MP.370A}
		\sum_{d^2 \leq x_0}\mu(d) 
		\sum_{\substack{p\leq x\\d^2\mid p+a}}1&=&\sum_{d^2 \leq x_0}\mu(d) \left(\frac{\li(x)}{\varphi(d^2)}+ O\left(xe^{-c_0\sqrt{\log x}}\right)\right)\\
		&=&\li(x)\sum_{n \geq 1} \frac{\mu(n)}{\varphi(n^2)} +O\left( \frac{x}{(\log x)^c}\right) \nonumber,
	\end{eqnarray}
	where $c_0>0$ is an absolute constant. An estimate of the second subsum in the partition yields
	\begin{eqnarray}\label{eq8009MP.380A}
		\sum_{x_0< d^2 \leq x}\mu(d) 
		\sum_{\substack{p\leq x\\d^2\mid p+a}}1&\leq&\sum_{x_0< d^2 \leq x,} \sum_{\substack{p\leq x\\d^2\mid p+a}}1\\
		&\ll&x\sum_{x_0<d^2 \leq x}\frac{1}{d^2}\nonumber\\
		&\ll&\frac{x}{(\log x)^c}\nonumber.
	\end{eqnarray}
	Summing \eqref{eq8009MP.370A} and \eqref{eq8009MP.380A} completes the verification.
\end{proof}
The well known constant has the numerical approximation
\begin{equation}\label{eq8009MP.390A}
s_0=\sum_{n \geq 1} \frac{\mu(n)}{\varphi(n^2)}=\prod_{p\geq 2}\left ( 1-\frac{1}{p(p-1)}\right )=0.373955838964330040631201
 \ldots.
\end{equation}

\subsection{Nonlinear Autocorrelation Functions Results}\label{S8008PP}
The asymptotic formula for squarefree autocorrelation function over the shifted primes is evaluated now.
\begin{thm} \label{thm8008MP.500} Let $\mu: \mathbb{N} \longrightarrow \{-1,0,1\}$ be the Mobius function, and let $a,b\ne0$ be integers such that $a\ne b$. Then, for any sufficiently large number $x\geq1$, 
	\begin{equation} \label{eq8008MP.500}
		\sum_{p \leq x} \mu^2(p+a) \mu^2(p+b)=s_0^2\li(x)   +O\left(  \frac{x}{(\log x)^c}\right) \nonumber,
	\end{equation}
where $s_0=0.373955\ldots$, and $c>1$ is an arbitrary constant. 
\end{thm}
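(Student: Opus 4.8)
The plan is to mirror the proof of Theorem~\ref{thm8009MP.350A}, now sieving the two shift conditions simultaneously. First I would insert the squarefree indicator $\mu^2(n)=\sum_{d^2\mid n}\mu(d)$ in both arguments, writing $\mu^2(p+a)\mu^2(p+b)=\sum_{d^2\mid p+a}\sum_{e^2\mid p+b}\mu(d)\mu(e)$, and then interchange the order of summation to obtain $\sum_{p\leq x}\mu^2(p+a)\mu^2(p+b)=\sum_{d,e}\mu(d)\mu(e)\,N(d,e;x)$, where $N(d,e;x)$ counts the primes $p\leq x$ satisfying the simultaneous congruences $p\equiv -a \bmod d^2$ and $p\equiv -b \bmod e^2$. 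By the Chinese Remainder Theorem these fuse into a single progression modulo $\lcm(d^2,e^2)=\gcd(d,e)^2\cdot(\lcm(d,e))^2/\gcd(d,e)^2$, which is nonempty precisely when the two residues are compatible, i.e. when $\gcd(d,e)^2\mid b-a$; otherwise $N(d,e;x)=0$.

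Next I would truncate the divisor variables at $d,e\leq y$ with $y=(\log x)^{c}$, exactly as in \eqref{eq8009MP.360A}. The tail, where $d>y$ or $e>y$, is controlled by discarding one of the two squarefree conditions and bounding the number of $p\leq x$ for which $p+a$ (resp. $p+b$) is divisible by a square exceeding $y^2$; this is $\ll\sum_{d>y}(x/d^2+1)\ll x/y+\sqrt{x}\ll x/(\log x)^{c}$, which is admissible. On the main range $d,e\leq y$ the modulus $\lcm(d^2,e^2)\leq(\log x)^{4c}$ is of polylogarithmic size, so the Siegel--Walfisz theorem applies and gives $N(d,e;x)=\li(x)/\varphi(\lcm(d^2,e^2))+O\!\left(xe^{-c_0\sqrt{\log x}}\right)$ whenever the residue class is coprime to the modulus, the finitely many exceptional $p\mid de$ contributing only $O(1)$. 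Summing the Siegel--Walfisz error over the $O((\log x)^{2c})$ admissible pairs keeps it of the shape $O(x/(\log x)^{c})$.

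It then remains to evaluate the main term $\li(x)\sum_{d,e}\mu(d)\mu(e)\,\varphi(\lcm(d^2,e^2))^{-1}$, where the sum is extended to all compatible pairs at the cost of a negligible error from completing the absolutely convergent series. Since $\mu$ and $\varphi(\lcm(\cdot)^2)$ are multiplicative, this sum expands as an Euler product over primes $\ell$, the local factor collecting the four cases $(v_\ell(d),v_\ell(e))\in\{0,1\}^2$ subject to the constraint that the diagonal case $\ell\mid\gcd(d,e)$ survives only when $\ell^2\mid b-a$. The intended conclusion is that this product factors as $\big(\sum_{d}\mu(d)/\varphi(d^2)\big)\big(\sum_{e}\mu(e)/\varphi(e^2)\big)=s_0^2$, recovering the square of the constant in \eqref{eq8009MP.390A}.

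The hard part will be precisely this last step, and it is where I would concentrate the most care. Writing out the local factor, the case $(v_\ell(d),v_\ell(e))\in\{(1,0),(0,1)\}$ contributes $-2/(\ell(\ell-1))$, while the diagonal term $(1,1)$ contributes $+1/(\ell(\ell-1))$ only under the compatibility constraint $\ell^2\mid b-a$. Thus the singular series that the method produces is genuinely $\prod_{\ell^2\nmid b-a}\big(1-\tfrac{2}{\ell(\ell-1)}\big)\prod_{\ell^2\mid b-a}\big(1-\tfrac{1}{\ell(\ell-1)}\big)$, and the passage to the clean value $s_0^2$ amounts to asserting that the two squarefree events on $p+a$ and $p+b$ are asymptotically independent. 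The entire correlation is carried by the off-diagonal terms with $\gcd(d,e)>1$, where $\varphi(\lcm(d^2,e^2))$ must replace the independence denominator $\varphi(d^2)\varphi(e^2)$; establishing that these correlated contributions reassemble into exactly $s_0^2$ (and that the residual dependence on $a,b$ through $b-a$ is confined to the error term) is the crux of the argument and the step most sensitive to the truncation level $y$.
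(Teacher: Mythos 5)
Your plan follows the same route as the paper's own proof of this theorem: insert $\mu^2(n)=\sum_{d^2\mid n}\mu(d)$ in both arguments, interchange the summations, truncate the divisor variables at a power of $\log x$, apply Siegel--Walfisz to the small moduli, and bound the tail trivially. The difference is that you have actually carried out the one step the paper skips, and in doing so you have exposed the real problem. Your local computation is correct: a pair with $\gcd(d,e)=g>1$ contributes only when $g^2\mid b-a$, the fused modulus is $\lcm(d^2,e^2)$ rather than $d^2e^2$, and the Euler product the method genuinely produces is
\[
\prod_{\ell^2\nmid b-a}\Bigl(1-\tfrac{2}{\ell(\ell-1)}\Bigr)\prod_{\ell^2\mid b-a}\Bigl(1-\tfrac{1}{\ell(\ell-1)}\Bigr),
\]
up to further corrections at primes dividing $a$ or $b$, where the progression $p\equiv -a\bmod \ell^2$ contains no primes. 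This product is never equal to $s_0^2=\prod_\ell\bigl(1-1/(\ell(\ell-1))\bigr)^2$, since at any prime with two distinct admissible residues the local factors differ by $1/(\ell(\ell-1))^2$. So the step you flag as ``the crux'' is not merely hard, it is false: the two squarefree events are not asymptotically independent, and the correct main term is a singular series depending on $a$ and $b$, in exact analogy with Theorem \ref{thm8009MN.200}, where the integer version is stated with $\prod_p(1-\varpi(p)/p^2)$ rather than $(6/\pi^2)^2$. The prime $\ell=2$ already kills the statement outright: for $a=1$, $b=3$, every odd prime satisfies $4\mid p+1$ or $4\mid p+3$, so $\sum_{p\le x}\mu^2(p+1)\mu^2(p+3)=1$ for all $x\ge2$, not $s_0^2\li(x)+O\bigl(x(\log x)^{-c}\bigr)$. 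The paper's proof reaches $s_0^2$ only by declaring $\gcd(d,e)=1$ and replacing $\varphi(d^2e^2)$ by $\varphi(d^2)\varphi(e^2)$ for every pair in \eqref{eq8008MP.520}; that is precisely the unjustified independence assertion you declined to make, and you were right to decline.

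A secondary caution concerns your tail estimate. Bounding the range $\max(d,e)>y$ by ``discarding one squarefree condition'' still leaves a double sum over pairs $(d,e)$, and the $O(1)$ terms in $N(d,e;x)\le x/\lcm(d^2,e^2)+O(1)$ run over as many as $O(x)$ pairs if both variables range up to $\sqrt{x}$; likewise, weighting the count of exceptional $p$ by the number of admissible $e$ costs a divisor factor $x^{\varepsilon}$ that destroys a polylogarithmic saving. The repair is standard: restrict to compatible pairs with $\lcm(d,e)\le\sqrt{2x}$, of which there are only $O(\sqrt{x}\,(\log x)^2)$, and sum $x\gcd(d,e)^2/(d^2e^2)$ over the tail to get $O(x\log y/y)$, admissible for $y=(\log x)^{c+1}$. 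That part of your argument is fixable; the constant in the main term is not.
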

\begin{proof}[\textbf{Proof}] Substitute the identity $\mu(n)^2=\sum_{d^2\mid n}\mu(d)$, and switching the order of summation yield
	\begin{eqnarray}\label{eq8008MP.510}
		\sum_{p \leq x}\mu^2(p+a) \mu^2(p+b)&=&\sum_{p \leq x} \sum_{d^2\mid p+a}\mu(d)\sum_{e^2\mid p+b}\mu(e)\\
		&=&\sum_{d^2 \leq x}\mu(d) \sum_{e^2 \leq x}\mu(e)\sum_{\substack{p\leq x\\d^2\mid p+a,\;e^2\mid p+b }}1\nonumber\\
		&=&\sum_{d^2 \leq x_0}\mu(d) \sum_{e^2 \leq x_0}\mu(e)\sum_{\substack{p\leq x\\d^2\mid p+a,\;e^2\mid p+b }}1\nonumber\\
		&& \hskip .75 in +\sum_{x_0<d^2 \leq x}\mu(d) \sum_{x_0<e^2 \leq x}\mu(e)\sum_{\substack{p\leq x\\d^2\mid p+a,\;e^2\mid p+b }}1,\nonumber
	\end{eqnarray}
	where $x_0=(\log x)^c$, and $c>1$ is an arbitrary constant. Let $q=d^2e^2$, $\gcd(d,e)=1$, and $p\equiv f \bmod q$. Applying the Siegel-Walfisz theorem, see \cite[p.\ 405]{FI2010}, \cite[Theorem 15.3]{DL2012}, et cetera, to the first subsum in the partition yields
	\begin{eqnarray}\label{eq8008MP.520}
T(x)&=&	\sum_{d^2 \leq x_0}\mu(d) \sum_{e^2 \leq x_0}\mu(e)\sum_{\substack{p\leq x\\d^2\mid p+a,\;e^2\mid p+b }}1\\	&=&\sum_{d^2 \leq x_0}\mu(d) \sum_{e^2 \leq x_0}\mu(e)\left(\frac{\li(x)}{\varphi(d^2 e^2)}+ O\left(xe^{-c\sqrt{\log x}}\right)\right)\nonumber\\
&=&\sum_{d^2 \leq x_0}\frac{\mu(d)}{\varphi(d^2)} \sum_{e^2 \leq x_0}\frac{\mu(e)}{\varphi(e^2)}\left(\li(x)+ O\left(xe^{-c_0\sqrt{\log x}}\right)\right)\nonumber\\	
	&=&\li(x) \left(\sum_{n \geq 1} \frac{\mu(n)}{\varphi(n^2)}\right)^2  +O\left(  \frac{x}{(\log x)^c}\right)   \nonumber,
	\end{eqnarray}
	where the constant $s_0=\sum_{n \geq 1} \mu(n)\varphi(n^2)^{-1}$ is computed in \eqref{eq8009MP.390A}, and $c_0>0$ is an absolute constant. An estimate of the second subsum in the partition yields
	\begin{eqnarray}\label{eq8008MP.530}
		\sum_{x_0<d^2 \leq x}\mu(d) \sum_{x_0<e^2 \leq x}\mu(e)\sum_{\substack{p\leq x\\d^2\mid p+a,\;e^2\mid p+b }}1&\leq&\sum_{x_0<d^2 \leq x} \sum_{x_0<e^2 \leq x}\sum_{\substack{p\leq x\\d^2\mid p+a,\;e^2\mid p+b }}1\nonumber\\
		&\ll&x\sum_{x_0<d^2 \leq x}\frac{1}{d^2}\sum_{x_0<e^2 \leq x}\frac{1}{e^2}\\
		&\ll&\frac{x}{(\log x)^c}\nonumber.
	\end{eqnarray}
	Summing \eqref{eq8008MP.520} and \eqref{eq8008MP.530} completes the verification.
\end{proof}

\begin{thm}\label{thm8008MP.700} Assume Hypothesis {\normalfont \ref{hyp2225P.550}}. Let $x\geq 1$ be a large number, and let $\mu: \mathbb{N} \longrightarrow \{-1,0,1\}$ be the Mobius function. If $a,b\ne0$ are small fixed integers such that $a\ne b$, then, for any sufficiently large number $x\geq1$, 
	\begin{equation}\label{eq8008MP.710}
		\sum_{p \leq x}\mu(p+a)^2 \mu(p+b)=O\left (\frac{x}{(\log x)^c} \right ),\nonumber
	\end{equation}
	where $c> 0$ is an arbitrary constant.
\end{thm}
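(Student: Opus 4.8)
The plan is to follow the same template used in the proofs of Theorem \ref{thm8111MP.500} and Theorem \ref{thm8009MP.350A}: detect the squarefree factor $\mu(p+a)^2$ with the convolution identity $\mu(n)^2=\sum_{d^2\mid n}\mu(d)$, exchange the order of summation, and split the resulting $d$-sum at a threshold $x_0=x^{1/2-\varepsilon}$ with $\varepsilon>0$. Concretely, I would write
\begin{eqnarray}
\sum_{p\leq x}\mu(p+a)^2\mu(p+b)
&=&\sum_{p\leq x}\mu(p+b)\sum_{d^2\mid p+a}\mu(d)\nonumber\\
&=&\sum_{d^2\leq x}\mu(d)\sum_{\substack{p\leq x\\ p\equiv -a\bmod d^2}}\mu(p+b)\nonumber\\
&=&\sum_{d^2\leq x_0}\mu(d)\sum_{\substack{p\leq x\\ p\equiv -a\bmod d^2}}\mu(p+b)
+\sum_{x_0<d^2\leq x}\mu(d)\sum_{\substack{p\leq x\\ p\equiv -a\bmod d^2}}\mu(p+b).\nonumber
\end{eqnarray}
The key observation is that each inner sum is now an average of $\mu(\,\cdot\,+b)$ over the primes lying in a single residue class modulo $d^2$, which is precisely the object controlled by Hypothesis \ref{hyp2225P.550}.

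For the main term (the first sum), note that $d^2\leq x_0=x^{1/2-\varepsilon}\leq x^{1/2}/\log^B x$ once $x$ is large, so every modulus $q=d^2$ falls inside the level-of-distribution range of the Bombieri--Vinogradov-type average in Hypothesis \ref{hyp2225P.550}(ii). I would bound the inner sum by the maximum over residue classes, then extend the sum over the square moduli $d^2\leq x_0$ to a sum over all moduli $q\leq x^{1/2}/\log^B x$, which only adds nonnegative terms. Applying Hypothesis \ref{hyp2225P.550}(ii) with shift parameter $b$ then yields
\begin{equation}
\Bigg|\sum_{d^2\leq x_0}\mu(d)\sum_{\substack{p\leq x\\ p\equiv -a\bmod d^2}}\mu(p+b)\Bigg|
\leq \sum_{q\leq x^{1/2}/\log^B x}\max_{e\bmod q}\max_{z\leq x}\Bigg|\sum_{\substack{p\leq z\\ p\equiv e\bmod q}}\mu(p+b)\Bigg|
\ll\frac{x}{(\log x)^{C}},\nonumber
\end{equation}
where $C=C(B)>1$ is arbitrary; choosing $B$ large enough makes $C$ exceed the target exponent $c$.

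For the tail term (the second sum), I would discard the signs of $\mu(d)$ and $\mu(p+b)$ and use the trivial count of integers in the relevant progression, namely $\#\{p\leq x:\,p\equiv -a\bmod d^2\}\leq x/d^2+1$. Summing over $x_0<d^2\leq x$, i.e. over $x^{1/4-\varepsilon/2}<d\leq x^{1/2}$, gives
\begin{equation}
\sum_{x_0<d^2\leq x}\Big(\frac{x}{d^2}+1\Big)\ll \frac{x}{x_0^{1/2}}+x^{1/2}\ll x^{3/4+\varepsilon/2},\nonumber
\end{equation}
which is a genuine power saving and hence $o\!\left(x/(\log x)^c\right)$ for every $c$. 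Summing the two contributions completes the argument.

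The main obstacle is not computational but structural: the entire estimate rests on Hypothesis \ref{hyp2225P.550}(ii), an unproven Bombieri--Vinogradov analogue for $\mu$ over shifted primes in progressions; without it the main term cannot be controlled, since the individual inner sums over $d^2\leq x_0$ are not known to exhibit cancellation. The one genuine point to verify is that the threshold $x_0$ is chosen simultaneously small enough that $d^2\leq x_0$ stays within the range $q\leq x^{1/2}/\log^B x$ of the hypothesis, and large enough that the trivial tail bound beats $x/(\log x)^c$; the choice $x_0=x^{1/2-\varepsilon}$ satisfies both. I note finally that this argument never uses $a\neq b$: it goes through verbatim when $a=b$, in which case it recovers Theorem \ref{thm8111MP.500} via $\mu(n)^3=\mu(n)$.
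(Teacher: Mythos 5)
Your proposal is correct and follows essentially the same route as the paper's own proof: the identity $\mu(n)^2=\sum_{d^2\mid n}\mu(d)$, an exchange of summation, a split of the $d$-range with Hypothesis \ref{hyp2225P.550}(ii) controlling the small moduli and a trivial divisor-count bound on the tail. The only cosmetic difference is your cutoff $x_0=x^{1/2-\varepsilon}$ versus the paper's $x^{2\varepsilon}$ with $\varepsilon\in(0,1/4)$; both sit inside the level of distribution of the hypothesis and both give a power-saving tail, and your closing remark that the argument nowhere uses $a\ne b$ is accurate.
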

\begin{proof}[\textbf{Proof}] Substitute the identity $\mu(n)^2=\sum_{d^2\mid n}\mu(d)$, and switching the order of summation yield
	\begin{eqnarray}\label{eq8009MP.360}
		\sum_{p \leq x}\mu(p+a)^2 \mu(p+b)&=&\sum_{p \leq x} \mu(p+b)\sum_{d^2\mid p+a}\mu(d)\\
		&=&\sum_{d^2 \leq x}\mu(d) \sum_{\substack{p\leq x\\d^2\mid p+a}}\mu(p+b)\nonumber\\
		&=&\sum_{d^2 \leq x^{2\varepsilon}}\mu(d) \sum_{\substack{p\leq x\\d^2\mid p+a}}\mu(p+b)+\sum_{x^{2\varepsilon}< d^2 \leq x}\mu(d) \sum_{\substack{p\leq x\\d^2\mid p+a}}\mu(p+b)\nonumber,
	\end{eqnarray}
	where $\varepsilon\in (0,1/4)$. Applying Hypothesis \ref{hyp2225P.550} to the first subsum in the partition yields
	\begin{eqnarray}\label{eq8009MP.370}
		\sum_{d^2 \leq x^{2\varepsilon}}\mu(d) \sum_{\substack{n\leq x\\d^2\mid n}}\mu(n+t)&\leq&\sum_{q \leq x^{\varepsilon}} \bigg | \sum_{\substack{p\leq x\\p\equiv -a \bmod q}}\mu(p+b)\bigg |\\
		&=&O\left( \frac{x}{(\log x)^{c}}\right) \nonumber,
	\end{eqnarray}
	where $q=d^2$. An estimate of the second subsum in the partition yields
	\begin{eqnarray}\label{eq8009MP.380}
		\sum_{x^{2\varepsilon}< d^2 \leq x}\mu(d) \sum_{\substack{p\leq x\\d^2\mid p+a}}\mu(p+b)&\leq&\sum_{x^{2\varepsilon}< d^2 \leq x} \sum_{\substack{p\leq x\\d^2\mid p+a}}1\\
		&\ll&x\sum_{x^{2\varepsilon}< d^2 \leq x} \frac{1}{d^2}\nonumber\\
		&\ll&x^{1-\varepsilon}\nonumber.
	\end{eqnarray}
	Summing \eqref{eq8009MP.370} and \eqref{eq8009MP.380} completes the verification.
\end{proof}

\begin{thm}\label{thm8008MP.750} Assume Hypothesis {\normalfont \ref{hyp2225P.560}}. Let $x\geq 1$ be a large number, and let $\mu: \mathbb{N} \longrightarrow \{-1,0,1\}$ be the Mobius function. If $a_0,a_1,a_2\ne0$ are small fixed integers such that $a_0<a_1<a_2$, then, for any sufficiently large number $x\geq1$, 
	\begin{equation}\label{eq8008MP.750}
		\sum_{p \leq x}\mu(p+a_0)^2 \mu(p+a_1)\mu(p+a_2)=O\left (\frac{x}{(\log x)^c} \right ),\nonumber
	\end{equation}
	where $c> 0$ is an arbitrary constant.
\end{thm}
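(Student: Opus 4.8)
The plan is to imitate the proof of Theorem~\ref{thm8008MP.700}, detaching the squarefree weight $\mu(p+a_0)^2$ and feeding the surviving degree-two product $\mu(p+a_1)\mu(p+a_2)$ into the autocorrelation hypothesis along arithmetic progressions. First I would insert the identity $\mu(n)^2=\sum_{d^2\mid n}\mu(d)$ at $n=p+a_0$ and interchange the order of summation, so that
\[
\sum_{p \leq x}\mu(p+a_0)^2 \mu(p+a_1)\mu(p+a_2)=\sum_{d^2 \leq x}\mu(d)\sum_{\substack{p\leq x\\ p\equiv -a_0 \bmod d^2}}\mu(p+a_1)\mu(p+a_2).
\]
Here the inner condition $d^2\mid p+a_0$ has been rewritten as the congruence $p\equiv -a_0\bmod d^2$. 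I would then split the outer sum at $d^2\leq x^{2\varepsilon}$ for a fixed $\varepsilon\in(0,1/4)$, regarding the low range as the main contribution and the high range as a negligible tail.

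For the low range, set $q=d^2$, so that $q\leq x^{2\varepsilon}$. After passing to absolute values and dominating the single residue $-a_0\bmod q$ by the maximum over all residue classes, the low-range contribution is majorized by
\[
\sum_{q\leq x^{2\varepsilon}}\max_{r \bmod q}\max_{z\leq x}\bigg|\sum_{\substack{p\leq z\\ p\equiv r \bmod q}}\mu(p+a_1)\mu(p+a_2)\bigg|.
\]
Because $\varepsilon\in(0,1/4)$ gives $2\varepsilon<1/2$, the range $q\leq x^{2\varepsilon}$ lies below the Bombieri--Vinogradov threshold $x^{1/2}/(\log x)^B$ for all large $x$, so this is exactly the shape governed by Hypothesis~\ref{hyp2225P.560}(ii), which delivers the bound $O\left(x/(\log x)^c\right)$ for $c$ as large as desired. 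The admissibility requirement $a_1\ne a_2$ demanded by the hypothesis is satisfied since $a_0<a_1<a_2$.

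For the high range $x^{2\varepsilon}<d^2\leq x$ I would discard all oscillation via $|\mu(p+a_1)\mu(p+a_2)|\leq1$ and count the integers in each progression $p\equiv -a_0\bmod d^2$, obtaining
\[
\sum_{x^{2\varepsilon}<d^2\leq x}\ \sum_{\substack{p\leq x\\ p\equiv -a_0 \bmod d^2}}1\ \ll\ x\sum_{x^{2\varepsilon}<d^2\leq x}\frac{1}{d^2}\ \ll\ x^{1-\varepsilon},
\]
precisely as in \eqref{eq8009MP.380}. Adding the two ranges then completes the argument.

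The only conceptual step is recognizing that expanding the single squarefree factor converts the triple product into a dyadic family of genuine pair autocorrelations of $\mu(\cdot+a_1)\mu(\cdot+a_2)$ over progressions of squared modulus $d^2$, which is exactly what Hypothesis~\ref{hyp2225P.560} is designed to control. I expect the main obstacle to be bookkeeping rather than analysis: one must confirm that the fixed residue $-a_0\bmod d^2$ is uniformly absorbed by the maximum over residue classes in the hypothesis, and that the level $x^{2\varepsilon}$ remains below $x^{1/2}/(\log x)^B$. All genuine oscillation is supplied by the hypothesis, and the tail is purely trivial, so no new analytic input beyond Hypothesis~\ref{hyp2225P.560} should be required.
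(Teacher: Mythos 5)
Your proposal is correct and follows essentially the same route as the paper, which itself simply instructs the reader to repeat the argument of Theorem \ref{thm8008MP.700} (expand $\mu(p+a_0)^2$ via $\mu(n)^2=\sum_{d^2\mid n}\mu(d)$, split at $d^2\leq x^{2\varepsilon}$, apply Hypothesis \ref{hyp2225P.560} to the low range, and bound the tail trivially). Your write-up is in fact somewhat more careful than the paper's sketch about absorbing the residue class $-a_0\bmod d^2$ into the maximum and checking the level condition $x^{2\varepsilon}<x^{1/2}/(\log x)^B$.
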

\begin{proof}[\textbf{Proof}] Except for minor changes, the verification is similar to the previous proof, for example, repeat \eqref{eq8009MP.360} to \eqref{eq8009MP.380}, but uses Hypothesis \ref{hyp2225P.560}.
\end{proof}

\subsection{Single-Sign Pattern Mobius Characteristic Functions} \label{S8833MP}
The analysis of single-Sign pattern characteristic function 
\begin{equation}\label{eq8833MP.100}
\mu^{\pm}(n)=\mu^2(n)\left( \frac{1\pm\mu(n)}{2}\right)= 	\begin{cases}
1 &\text{ if } \mu(n)=\pm1,\\
0 &\text{ if } \mu(n)\ne\pm1,
\end{cases}
\end{equation}
of the subset of integers
\begin{equation}\label{eq8833MP.110}
\mathcal{P}_{\mu}^{\pm}	=	\{n\geq 1: \mu(n)=\pm\}
\end{equation} 
is well known. Here, the same idea is extended to the shifted primes.

\begin{lem}\label{lem8833MP.200A} Let $a\ne0$ be an integer, and let $\mu(n)\in \{-1,0,1\}$ be the Mobius function. Then,
\begin{eqnarray}\label{eq8833MP.200A}
	\mu^{\pm}(a,p)&=&\mu^2(p+a)\left( \frac{1\pm\mu(p+a)}{2}\right)\\	&=&
\begin{cases}
		1 &\text{ if } \mu(p+a)=\pm1,\\
		0 &\text{ if } \mu(p+a)\ne\pm1,\nonumber\\
\end{cases}
\end{eqnarray}
are the characteristic functions of the subset of primes
\begin{equation}\label{eq8833MP.210A}
	\mathcal{P}_{\mu}^{\pm }(a)	=	\{p\geq 2: \mu(p+a)=\pm1\}.
\end{equation} 
\end{lem}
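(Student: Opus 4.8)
The plan is to verify the two claimed identities by a direct case analysis on the value of $\mu(p+a)$, which by the definition \eqref{eq1212P.050A} lies in $\{-1,0,1\}$. Fixing a prime $p$ and abbreviating $n = p+a$, I would split into the three mutually exclusive cases $\mu(n)=0$, $\mu(n)=1$, and $\mu(n)=-1$, and in each case evaluate the two factors $\mu^2(n)$ and $\frac{1 \pm \mu(n)}{2}$ separately before multiplying. Since this is nothing more than a pointwise identity of $\{0,1\}$-valued functions, the argument amounts to confirming that the formula in \eqref{eq8833MP.200A} reproduces the displayed piecewise expression.

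First I would dispose of the degenerate case $\mu(n)=0$, i.e.\ when $p+a$ fails to be squarefree: here $\mu^2(n)=0$, so the product $\mu^2(n)\bigl(\frac{1 \pm \mu(n)}{2}\bigr)$ vanishes regardless of the sign chosen, correctly assigning the value $0$. Next I would treat the two squarefree cases. When $\mu(n)=1$ we have $\mu^2(n)=1$ together with $\frac{1+\mu(n)}{2}=1$ and $\frac{1-\mu(n)}{2}=0$, so that $\mu^{+}(a,p)=1$ and $\mu^{-}(a,p)=0$; symmetrically, when $\mu(n)=-1$ one obtains $\mu^{+}(a,p)=0$ and $\mu^{-}(a,p)=1$. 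Collating the three cases shows that $\mu^{+}(a,p)$ equals $1$ precisely when $\mu(p+a)=+1$ and $0$ otherwise, while $\mu^{-}(a,p)$ detects exactly $\mu(p+a)=-1$. These are by definition the indicator functions of the prime subsets $\mathcal{P}_{\mu}^{+}(a)$ and $\mathcal{P}_{\mu}^{-}(a)$ given in \eqref{eq8833MP.210A}, which completes the verification.

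There is no genuine analytic obstacle here; the statement is a specialization to the argument $n=p+a$ of the well-known integer identity \eqref{eq8833MP.100}. The only point deserving attention is recognizing the role of the squarefree indicator $\mu^2(n)$: it is this factor that suppresses the $\mu(n)=0$ contribution, so that the sign-selecting factor $\frac{1 \pm \mu(n)}{2}$ can never produce a spurious nonzero value on a non-squarefree shift $p+a$. Once this is noted, the three-case check is immediate and the equalities in \eqref{eq8833MP.200A} follow at once.
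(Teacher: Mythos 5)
Your proposal is correct: the three-way case analysis on $\mu(p+a)\in\{-1,0,1\}$, with the factor $\mu^2(p+a)$ killing the non-squarefree case and the factor $\frac{1\pm\mu(p+a)}{2}$ selecting the sign, verifies both identities in \eqref{eq8833MP.200A} completely. The paper states this lemma without any proof, treating it as the immediate specialization to $n=p+a$ of the integer identity \eqref{eq8833MP.100}, and your check is exactly the verification that is implicitly intended, so there is nothing to compare beyond noting that you have supplied the routine details the paper omits.
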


\subsection{Single-Sign Pattern Mobius Counting Functions} \label{S8844MP} 
The single-sign patterns $\mu(n)=1$ and $\mu(n)=-1$ counting functions over the integers have the forms
\begin{equation}\label{eq8844MP.100A}
	R^{+}(x)=\sum_{\substack{n\leq x\\ \mu(n)=1}}1=\sum_{n\leq x}\left( \frac{1+\mu(n)}{2}\right) \mu^2(n)=\frac{1}{2}\frac{x}{\zeta(2)}+O\left( xe^{-c\sqrt{\log x}}\right), 
\end{equation}
and 
\begin{equation}\label{eq8844MP.100B}
	R^{-}(x)=\sum_{\substack{n\leq x\\ \mu(n)=-1}}1=\sum_{n\leq x}\left( \frac{1-\mu(n)}{2}\right) \mu^2(n)=\frac{1}{2}\frac{x}{\zeta(2)}+O\left( xe^{-c\sqrt{\log x}}\right), 
\end{equation}
respectively. In terms of these functions, the summatory Mobius function has the asymptotic formula
\begin{equation}\label{eq8844MP.100C}
	R(x)=\sum_{n\leq x}\mu(n)=R^{+}(x)-R^{-}(x)=O\left( xe^{-c\sqrt{\log x}}\right). 
\end{equation}
Basically, it is a different form of the Prime Number Theorem
\begin{equation}\label{eq8844MP.100D}
\pi(x)=\li(x)+O\left(xe^{-c\sqrt{\log x}} \right) ,	
\end{equation}
where $\li(x)=\int_2^2(\log t)^{-1}dt$ is the logarithm integral, and $c>0$ is an absolute constant, see \cite[Eq.~27.12.5]{DLMF}, \cite[Theorem 3.10]{EL1985}, et alii. \\

The same idea extends to the single-sign patterns $\mu(p+a)=1$ and $\mu(p+a)=-1$ of the shifted primes. Let $R(a,x)=\sum_{p\leq x}\mu(p+a)$. The single-sign pattern counting functions are defined by
\begin{equation}\label{eq8844MP.110A}
	R^{+}(a,x)=\sum_{\substack{p\leq x\\ \mu(p+a)=1}}1=\sum_{\substack{p\leq x\\ n\in \mathcal{P}_{\mu}^{+}(a)}}1,
\end{equation}
\begin{equation}\label{eq8844MP.110B}
	R^{-}(a,x)=\sum_{\substack{p\leq x\\ \mu(p+a)=-1}}1=\sum_{\substack{p\leq x\\ p\in \mathcal{P}_{\mu}^{-}(a)}}1.
\end{equation}

The single-sign pattern counting functions \eqref{eq8844MP.110A} to \eqref{eq8844MP.110B} are precisely the cardinalities of the subsets of integers
\begin{multicols}{2}
	\begin{enumerate}
		\item $\mathcal{P}_{\mu}^{+}(a)\subset \tP$ ,
		\item $\mathcal{P}_{\mu}^{-}(a)\subset \tP$ ,
	\end{enumerate}
\end{multicols}
defined in \eqref{eq8833MP.210A}. The symbol $\tP =\{2,3,5,\ldots, \}$ denotes the set of prime numbers. \\

The next result is required to complete the analysis of the asymptotic formula for $R(a,x)=\sum_{p\leq x}\mu(p+a)$, which is completed in the next section.

\begin{lem}\label{lem8844MP.300A} Let $x\geq1$ be a large number, and let $a\ne0$ be a fixed integer. Then, 
	\begin{equation}\label{eq8844MP.300A}
		R^{\pm}(a,x)= \frac{1}{2}s_0\li(x)+ \frac{1}{2}R(a,x)+O\left(\frac{x}{(\log x)^{c}} \right) ,\nonumber
	\end{equation}
	where $s_0=0.373955\ldots$, and $c>0$ are constants.
\end{lem}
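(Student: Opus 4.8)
The plan is to express the single-sign counting function $R^{\pm}(a,x)$ directly through the characteristic-function identity recorded in Lemma \ref{lem8833MP.200A} and then reduce the two resulting sums to quantities already controlled earlier in the paper. First I would write
\[
R^{\pm}(a,x)=\sum_{p\leq x}\mu^{\pm}(a,p)=\sum_{p\leq x}\mu^2(p+a)\left(\frac{1\pm\mu(p+a)}{2}\right),
\]
and split the right-hand side linearly into
\[
R^{\pm}(a,x)=\frac{1}{2}\sum_{p\leq x}\mu^2(p+a)\pm\frac{1}{2}\sum_{p\leq x}\mu^2(p+a)\mu(p+a).
\]

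The key elementary observation is the pointwise identity $\mu^2(n)\mu(n)=\mu(n)$, valid for every $n\geq1$: indeed $\mu(n)=0$ exactly when $\mu^2(n)=0$, while $\mu^2(n)=1$ whenever $\mu(n)=\pm1$, so multiplying $\mu$ by $\mu^2$ changes nothing. This collapses the second sum to
\[
\sum_{p\leq x}\mu^2(p+a)\mu(p+a)=\sum_{p\leq x}\mu(p+a)=R(a,x).
\]
For the first sum I would invoke Theorem \ref{thm8009MP.350A}, which furnishes the asymptotic formula $\sum_{p\leq x}\mu^2(p+a)=s_0\li(x)+O\!\left(x(\log x)^{-c}\right)$ with $s_0=0.373955\ldots$ and $c>1$ arbitrary. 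Since the error term there is absorbed without change after multiplication by $1/2$, substituting both evaluations yields
\[
R^{\pm}(a,x)=\frac{1}{2}s_0\li(x)+\frac{1}{2}R(a,x)+O\left(\frac{x}{(\log x)^{c}}\right),
\]
which is exactly the claimed formula (with the understanding that the $\pm$ on the left propagates to the sign of the $\frac{1}{2}R(a,x)$ term).

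I expect no substantive obstacle in this argument: once the sign-decomposition of $\mu^{\pm}(a,p)$ and the squarefree count of Theorem \ref{thm8009MP.350A} are in hand, the derivation is mechanical. The only points requiring care are the bookkeeping of the $\pm$ sign attached to $R(a,x)$ and the verification that the error term $O(x(\log x)^{-c})$ inherited from Theorem \ref{thm8009MP.350A} survives the halving and delivers the constant $c>0$ quoted in the statement (which follows a fortiori, since the theorem supplies any $c>1$ and hence any $c>0$ by monotonicity of the bound in $c$). All the analytic content has therefore been front-loaded into Theorem \ref{thm8009MP.350A}, and this lemma merely repackages it via the single-sign characteristic function.
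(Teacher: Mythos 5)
Your proposal is correct and follows essentially the same route as the paper's own proof: expand $\mu^{\pm}(a,p)=\mu^2(p+a)(1\pm\mu(p+a))/2$, use $\mu^3=\mu$ to reduce the signed sum to $R(a,x)$, and invoke Theorem \ref{thm8009MP.350A} for $\sum_{p\leq x}\mu^2(p+a)$. Your remark that the $\pm$ should propagate to the $\frac{1}{2}R(a,x)$ term is a fair catch of a small sign slip in the lemma's statement, but otherwise there is nothing to add.
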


\begin{proof}[\textbf{Proof}] Consider the pattern $\mu(p+a)=+1$. Now, use Lemma \ref{lem8833MP.200A} to express the single-sign pattern counting function as 
	\begin{eqnarray}   \label{eq4422P.310A}
		2R^{+}(a,x)&=&\sum_{p\leq x}\mu^{+}(a,p)\\
		&=&\sum_{p\leq x}\mu^2(p+a)\left( 1+\mu(p+a)\right)\nonumber\\
		&=&\sum_{p\leq x}\mu^2(p+a)+\sum_{p\leq x}\mu^3(p+a)\nonumber.
	\end{eqnarray}
	The two finite sums have the following evaluations or estimates.
	\begin{enumerate}
		\item $ \displaystyle \sum_{p\leq x}\mu^2(p+a)=s_0\li(x)+O\left(\frac{x}{(\log x)^{c}} \right), $\\
		
		where $s_0=0.373955\ldots $ is a constant, see Theorem \ref{thm8009MP.350A}.\\
				
		\item $ \displaystyle \sum_{n\leq x}\mu^3(p+a)=\sum_{p\leq x}\mu(p+a), $\\
		
		since $\mu^{2k+1}(n)=\mu(n)$ for any integer $k$. 
	\end{enumerate}
	Summing these evaluations or estimates verifies the claim for $R^{+}(a,x)\geq0$. The verification for the other single-sign pattern counting function $R^{-}(a,x)\geq0$, is similar.
\end{proof}

\subsection{Double-Sign Patterns Mobius Characteristic Functions} \label{S8855MP}
The analysis of single-sign pattern characteristic functions is extended here to the double-sign patterns
\begin{equation}\label{eq8855MP.100}
	\mu(p+a)=\pm1 \quad \text{ and }\quad 	\mu(p+b)=\pm1, 
\end{equation}
where $a,b\ne0$ such that $a\ne b$, and $p\geq2$ is prime. The current research has emphasized the sign patterns over the integers, confer \cite{HA1986}, \cite[Corollary 1.7]{TT2015}, \cite{KP1986}, and there is some literature on the patterns over the shifted primes, \cite{PJ2015}, and similar literature, for details.

\begin{lem}\label{lem8855MP.200} Let $a,b\ne0$ such that $a\ne b$ be small fixed integers, and let $\mu(n)\in \{-1,0,1\}$ be the Mobius function. Then,
	\begin{eqnarray}\label{eq8855MP.200}
		\mu^{\pm \pm}(a,b,p)&=&\mu^2(p+a)\mu^2(p+b)\left( \frac{1\pm\mu(p+ a)}{2}\right)\left( \frac{1\pm\mu(p+b)}{2}\right)\\	&=&
		\begin{cases}
			1 &\text{ if } \mu(p+a)=\pm1,\mu(p+b)=\pm1,\\
			0 &\text{ if } \mu(p+a)\ne\pm1,\mu(p+b)\ne\pm1,\nonumber\\
		\end{cases}
	\end{eqnarray}
	are the characteristic functions of the subset of integers
	\begin{equation}\label{eq8855MP.210}
		\mathcal{P}_{\mu}^{\pm \pm}(a,b)	=	\{p\geq 2: \mu(p+a)=\pm1, \mu(p+b)=\pm1\}.
	\end{equation} 
\end{lem}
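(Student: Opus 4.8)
The plan is to reduce the double-sign statement to the already-established single-sign case of Lemma \ref{lem8833MP.200A}. First I would observe that the defining expression factors cleanly as a product of two single-sign pattern characteristic functions, namely
\begin{equation}
\mu^{\pm\pm}(a,b,p) = \left[\mu^2(p+a)\left(\frac{1\pm\mu(p+a)}{2}\right)\right]\left[\mu^2(p+b)\left(\frac{1\pm\mu(p+b)}{2}\right)\right] = \mu^{\pm}(a,p)\,\mu^{\pm}(b,p),
\end{equation}
where each bracketed factor is exactly one of the single-sign pattern functions introduced in \eqref{eq8833MP.200A}, and the two signs may be selected independently so as to realize each of the four patterns $++$, $+-$, $-+$, $--$.

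Next I would apply Lemma \ref{lem8833MP.200A} to each factor separately. That lemma asserts that $\mu^{\pm}(a,p)$ takes only the values $0$ and $1$, equalling $1$ precisely when $\mu(p+a)=\pm1$ and $0$ otherwise; likewise $\mu^{\pm}(b,p)\in\{0,1\}$, with $\mu^{\pm}(b,p)=1$ if and only if $\mu(p+b)=\pm1$. Since a product of two $\{0,1\}$-valued quantities equals $1$ exactly when both equal $1$ and vanishes otherwise, the product $\mu^{\pm}(a,p)\,\mu^{\pm}(b,p)$ equals $1$ if and only if simultaneously $\mu(p+a)=\pm1$ and $\mu(p+b)=\pm1$, and equals $0$ otherwise. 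This is the stated case distinction, and it identifies $\mu^{\pm\pm}(a,b,p)$ as the indicator function of the set $\mathcal{P}_{\mu}^{\pm\pm}(a,b)$ defined in \eqref{eq8855MP.210}.

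There is no substantive obstacle here: the assertion is essentially a tautological verification, whose only content is the multiplicativity of indicator functions combined with the single-sign identity. The one point worth stating carefully is that the conditions $\mu(p+a)=\pm1$ and $\mu(p+b)=\pm1$ already subsume the squarefree requirements $\mu^2(p+a)=\mu^2(p+b)=1$, so no separate squarefreeness bookkeeping is needed; the explicit factors $\mu^2(p+a)$ and $\mu^2(p+b)$ in the definition are precisely what enforce this and exclude any contribution from the degenerate case $\mu(p+a)=0$ or $\mu(p+b)=0$.
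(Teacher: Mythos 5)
Your argument is correct and complete: the factorization $\mu^{\pm\pm}(a,b,p)=\mu^{\pm}(a,p)\,\mu^{\pm}(b,p)$ together with the $\{0,1\}$-valuedness of each single-sign factor from Lemma \ref{lem8833MP.200A} immediately yields the stated case distinction, and your remark that the $\mu^2$ factors are what kill the degenerate value $\mu=0$ is exactly the right point to make explicit. The paper itself supplies no proof of this lemma, treating it as a self-evident consequence of the single-sign case, so your write-up is in effect the verification the paper omits; note only that the paper's ``$0$'' case is stated with an ``and'' where it means ``or'' (the function vanishes as soon as either condition fails), which your product argument handles correctly.
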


\subsection{Double-Sign Patterns Mobius Counting Functions} \label{S8877MP}
The single-sign counting functions for the single patterns $\mu(p+a)=1$ and $\mu(p+a)=-1$ are extended to the double-sign counting functions for the double signs patterns 
\begin{equation}\label{eq8877MP.100}
	(\mu(p+a),\mu(p+b))=(\pm1,\pm1).
\end{equation}

The double-sign counting functions are defined by
\begin{equation}\label{eq8877MP.110A}
	R^{++}(a,b,x)=\sum_{\substack{p\leq x\\ \mu(p+a)=1,\; \mu(p+b)=1}}1=\sum_{\substack{p\leq x\\ p\in \mathcal{P}_{\mu}^{++}(a,b)}}1,
\end{equation}
\begin{equation}\label{eq8877MP.110B}
	R^{+-}(a,b,x)=\sum_{\substack{p\leq x\\ \mu(p+a)=1,\; \mu(p+b)=-1}}1=\sum_{\substack{p\leq x\\ p\in \mathcal{P}_{\mu}^{+-}(a,b)}}1,
\end{equation}
\begin{equation}\label{eq8877MP.110C}
	R^{-+}(a,b,x)=\sum_{\substack{p\leq x\\ \mu(p+a)=-1,\; \mu(p+b)=1}}1=\sum_{\substack{p\leq x\\ p\in \mathcal{P}_{\mu}^{-+}(a,b)}}1,
\end{equation}
\begin{equation}\label{eq8877MP.110D}
	R^{- -}(a,b,x)=\sum_{\substack{p\leq x\\ \mu(p+a)=-1,\; \mu(p+b)=-1}}1=\sum_{\substack{p\leq x\\ p\in \mathcal{P}_{\mu}^{- -}(a,b)}}1.
\end{equation}

The double-sign counting functions \eqref{eq8877MP.110A} to \eqref{eq8877MP.110D} are precisely the cardinalities of the subsets of integers
\begin{multicols}{2}
	\begin{enumerate}
		\item $\mathcal{P}_{\mu}^{++}(a,b)\subset \tP$ ,
		\item $\mathcal{P}_{\mu}^{+-}(a,b)\subset \tP$ ,
		\item $\mathcal{P}_{\mu}^{-+}(a,b)\subset \tP$ ,
		\item $\mathcal{P}_{\mu}^{--}(a,b)\subset \tP$ ,
	\end{enumerate}
\end{multicols}
defined in \eqref{eq8855MP.210}. In terms of these functions, the double autocorrelation function has form 
\begin{eqnarray}\label{eq8877MP.120}
	R(a,b,x)&=&\sum_{p\leq x}\mu(p+a)\mu(p+b)\\
	&=&R^{++}(a,b,x)-R^{+-}(a,b,x)+R^{--}(a,b,x)-R^{-+}(a,b,x).\nonumber
\end{eqnarray}
The next result is required to complete the analysis of the asymptotic formula for $R(a,b,x)$, which is completed in the next section.

\begin{lem}\label{lem8877MP.300} Assume Hypothesis {\normalfont \ref{hyp2225P.550}}. Let $x\geq1$ be a large number, and let $a,b\in \Z$ such that $a\ne b$, be fixed integers. Then, 
	\begin{equation}\label{eq8877MP.300}
		R^{\pm \pm}(a,b,x)= \frac{1}{4}s_0^2\li(x)+ \frac{1}{4}R(a,b,x)+O\left(\frac{x}{(\log x)^{c}} \right) ,\nonumber
	\end{equation}
	where $s_0=0.373955\ldots$, and $c>1$ is an arbitrary constants.
\end{lem}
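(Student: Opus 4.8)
The plan is to follow the template of Lemma \ref{lem8844MP.300A}, upgrading the degree-one characteristic function to the degree-two product furnished by Lemma \ref{lem8855MP.200}. I treat the matched pattern $\mu(p+a)=\mu(p+b)=+1$ first; the remaining three patterns run identically. Summing the characteristic function $\mu^{++}(a,b,p)$ over $p\le x$ and clearing the denominator $4$ gives
\[
4R^{++}(a,b,x)=\sum_{p\leq x}\mu^2(p+a)\,\mu^2(p+b)\bigl(1+\mu(p+a)\bigr)\bigl(1+\mu(p+b)\bigr).
\]

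First I would expand the bracketed product into its four monomials and distribute the factor $\mu^2(p+a)\mu^2(p+b)$ across each, using $\mu^{2k+1}(n)=\mu(n)$ to collapse the odd powers $\mu^2\cdot\mu=\mu$. This yields
\[
4R^{++}(a,b,x)=\sum_{p\leq x}\mu^2(p+a)\mu^2(p+b)+\sum_{p\leq x}\mu(p+a)\mu^2(p+b)+\sum_{p\leq x}\mu^2(p+a)\mu(p+b)+\sum_{p\leq x}\mu(p+a)\mu(p+b).
\]
Now each sum is quoted from a result already in hand. The first is the squarefree autocorrelation $s_0^2\li(x)+O(x/(\log x)^c)$ of Theorem \ref{thm8008MP.500}. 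The middle two are each a squarefree indicator at one shift twisted by $\mu$ at the other; by Theorem \ref{thm8008MP.700} (applied once to the pair $(a,b)$ and once to $(b,a)$) both are $O(x/(\log x)^c)$, and it is here that Hypothesis \ref{hyp2225P.550} enters. The last sum is, by definition \eqref{eq8877MP.120}, exactly $R(a,b,x)$. Dividing by $4$ delivers the stated formula for $R^{++}$.

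For the remaining patterns I would repeat the expansion verbatim; the only change is the sign carried into the bracket, and hence into the monomial $\mu(p+a)\mu(p+b)$, so that the coefficient of $R(a,b,x)$ tracks the product of the two chosen signs while the main term $\tfrac14 s_0^2\li(x)$ and the error $O(x/(\log x)^c)$ are unaffected. The expansion and the appeal to Theorems \ref{thm8008MP.500} and \ref{thm8008MP.700} are routine; the genuine content sits entirely in the two mixed sums $\sum_{p\leq x}\mu^2(p+a)\mu(p+b)$, whose smallness is the crux and is exactly what forces the conditional Hypothesis \ref{hyp2225P.550} into the statement. Everything else, namely the constant $s_0^2$ and its error term, is unconditional, being inherited from the Siegel-Walfisz input behind Theorem \ref{thm8008MP.500}.
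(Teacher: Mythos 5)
Your proposal coincides with the paper's own proof: both expand $4R^{++}(a,b,x)$ via the characteristic function of Lemma \ref{lem8855MP.200} into the four monomial sums, evaluate the pure squarefree term by Theorem \ref{thm8008MP.500}, bound the two mixed terms by Theorem \ref{thm8008MP.700} (where Hypothesis \ref{hyp2225P.550} enters), and identify the last term with $R(a,b,x)$. Your remark that the coefficient of $R(a,b,x)$ carries the product of the two chosen signs is in fact a more careful bookkeeping of the mixed patterns than the paper's uniform $+\tfrac14 R(a,b,x)$, but the argument is the same.
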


\begin{proof}[\textbf{Proof}] Without loss in generality, consider the pattern $(\mu(p+a),\mu(p+b))=(+1,+1)$. Now, use Lemma \ref{lem8855MP.200} to express the double-sign pattern counting function as 
	\begin{eqnarray}   \label{eq4477MP.310}
		4R^{++}(a,b,x)&=&\sum_{p\leq x}\mu^{++}(a,b,p)\\
		&=&\sum_{p\leq x}\mu^2(p+a)\mu^2(p+b)\left( 1+\mu(p+a)\right) \left( 1+\mu(p+b)\right)\nonumber\\
		&=&\sum_{p\leq x}\mu^2(p+a)\mu^2(p+b)\left( 1+\mu(p+a)+\mu(p+b)+\mu(p+a)\mu(p+b)\right)\nonumber\\
		&=&\sum_{p\leq x}\mu^2(p+a)\mu^2(p+b)+\sum_{p\leq x}\mu^3(p+a)\mu^2(p+b) \nonumber\\
		&&\hskip 1 in +\sum_{p\leq x}\mu(p+a)^2\mu^3(p+b)+\sum_{p\leq x}\mu^3(p+a)\mu^3(p+b)\nonumber\\
		&\geq&0\nonumber.
	\end{eqnarray}
	The last four finite sums have the following evaluations or estimates.
	\begin{enumerate}
		\item $ \displaystyle \sum_{p\leq x}\mu^2(p+a)\mu^2(p+b)=s_0^2\li(x)+O\left(\frac{x}{(\log x)^{c}} \right), $\\
		
		where $s_0>0$ is a constant, see Theorem \ref{thm8008MP.500}.\\
		
		\item $ \displaystyle \sum_{p\leq x}\mu^3(p+a)\mu^2(p+b)=O\left(\frac{x}{(\log x)^{c}} \right), $\\
		
		where $c>0$ is an arbitrary constant, see Theorem \ref{thm8008MP.700}.\\
		
		\item $ \displaystyle \sum_{p\leq x}\mu^2(p+a)\mu^3(p+b)=O\left(\frac{x}{(\log x)^{c}} \right), $\\
		
		where $c>0$ is an arbitrary constant, see Theorem \ref{thm8008MP.700}.\\
		
		\item $ \displaystyle \sum_{p\leq x}\mu^3(p+a)\mu^3(p+b)=\sum_{p\leq x}\mu(p+a)\mu(p+b), $\\
		
		since $\mu^{2k+1}(n)=\mu(n)$ for any integer $k$. 
	\end{enumerate}
	Summing these evaluations or estimates verifies the claim for $R^{++}(a,b,x)\geq0$. The verifications for the next three double-sign pattern counting functions $R^{+-}(a,b,x)\geq0$, $R^{-+}(a,b,x)\geq0$, and $R^{--}(a,b,x)\geq0$ are similar.
\end{proof}

\subsection{Result for the Mobius Autocorrelation Function} \label{S2626MP}

\begin{proof}[\textbf{Proof}] ({\bfseries Theorem \ref{thm1212MP.700}}) Utilize the identity $\mu(n)=\lambda(n)\mu^2(n)$ to rewrite the autocorrelation function.  
	\begin{eqnarray}\label{eq2626MP.220A}
		R(a,b,x)&=&\sum_{p\leq x}\mu(p+a)\mu(p+b)\\
		&=&\sum_{p\leq x}\lambda(p+a)\mu^2(p+a) \lambda(p+b)\mu^2(p+b) \nonumber\\
		&=&\sum_{p\leq x}\mu(p+a)\mu(p+b)\\
		&=&\sum_{d^2\leq x}\mu(d)\sum_{e^2\leq x}\mu(e)\sum_{\substack{p\leq x\\d^2\mid p+a\\e^2\mid p+b}}\lambda(p+a) \lambda(p+b)  \nonumber. 
	\end{eqnarray} 
Let $x_0\leq x^{1/8-\varepsilon}$, where $\varepsilon>0$, and partition it.\\

Applying Hypothesis \ref{hyp2225P.550} to the first term yields the upper bound
\begin{eqnarray}\label{eq2626MP.230A}
T_0(a,b,x)&=&\sum_{d^2\leq x_0}\mu(d)\sum_{e^2\leq x_0}\mu(e)\sum_{\substack{p\leq x\\d^2\mid p+a\\e^2\mid p+b}}\lambda(p+a) \lambda(p+b)\\
&\leq&\sum_{q\leq x_0^4}\bigg |\sum_{\substack{p\leq x\\p\equiv f\bmod q}}\lambda(p+a) \lambda(p+b) \bigg | \nonumber\\ 
&=&O\left(\frac{x}{(\log x)^c} \right) ,
\end{eqnarray}
where $q=\lcm(d^2,e^2)\leq x_0^4\leq x^{1/2-\varepsilon}$, and $c>1$ is an arbitrary constant.\\

The second term has the upper bound
\begin{eqnarray}\label{eq2626MP.240A}
	T_1(a,b,x)&=&\sum_{d^2\leq x_0}\mu(d)\sum_{x_0<e^2\leq x}\mu(e)\sum_{\substack{p\leq x\\d^2\mid p+a\\e^2\mid p+b}}\lambda(p+a) \lambda(p+b)\\
	&\leq&\sum_{x_0<d^2\leq x,}\sum_{x_0<e^2\leq x}\sum_{\substack{p\leq x\\d^2\mid p+a\\e^2\mid p+b}}1  \nonumber\\
&=&O\left( x^{1-\varepsilon}\right) \nonumber. 
\end{eqnarray}
The third term has the upper bound
\begin{eqnarray}\label{eq2626MP.250A}
	T_2(a,b,x)&=&\sum_{x_0<d^2\leq x}\mu(d)\sum_{e^2\leq x_0}\mu(e)\sum_{\substack{p\leq x\\d^2\mid p+a\\e^2\mid p+b}}\lambda(p+a) \lambda(p+b)\\
	&\leq&\sum_{x_0<d^2\leq x,}\sum_{e^2\leq x_0}\sum_{\substack{p\leq x\\d^2\mid p+a\\e^2\mid p+b}}1  \nonumber\\
	&=&O\left( x^{1-\varepsilon}\right)\nonumber.
\end{eqnarray} 
The fourth term has the upper bound
\begin{eqnarray}\label{eq2626MP.260A}
	T_3(a,b,x)&=&\sum_{x_0<d^2\leq x}\mu(d)\sum_{x_0<e^2\leq x}\mu(e)\sum_{\substack{p\leq x\\d^2\mid p+a\\e^2\mid p+b}}\lambda(p+a) \lambda(p+b)\\
	&\leq&\sum_{x_0<d^2\leq x,}\sum_{x_0<e^2\leq x_0}\sum_{\substack{p\leq x\\d^2\mid p+a\\e^2\mid p+b}}1  \nonumber\\
	&=&O\left( x^{1-\varepsilon}\right)\nonumber.
\end{eqnarray}  
Summing \eqref{eq2626MP.230A} to \eqref{eq2626MP.260A} completes the proof.
\end{proof}

\section{Results for the Liouville Function over the Shifted Primes}\label{S2299}
\subsection{Average Orders of Liouville Functions}\label{S2299B}

\begin{thm} \label{thm2299.500} If $\lambda: \N\longrightarrow \{-1,1\}$ is the Liouville function, then, for any large number $x>1$, the following statements are true.
\begin{enumerate} [font=\normalfont, label=(\roman*)]
	\item $\displaystyle \sum_{n \leq x} \lambda(n)=O \left (xe^{-c\sqrt{\log x}}\right )$, \tabto{8cm} unconditionally,
	\item $\displaystyle \sum_{n\leq x}\lambda(n)=O\left( x^{1/2+\varepsilon} \right ), $ \tabto{8cm}conditional on the RH.
\end{enumerate}
where $c>0$ is an absolute constant, and $\varepsilon>0$ is an arbitrarily small number.
\end{thm}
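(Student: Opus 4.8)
The plan is to deduce both statements from the corresponding Möbius bounds in Theorem \ref{thm2222.500} by exploiting the elementary convolution identity that links $\lambda$ and $\mu$. Since $\lambda$ is completely multiplicative with $\lambda(p)=-1$, its Dirichlet series factors as $\sum_{n\geq1}\lambda(n)n^{-s}=\zeta(2s)/\zeta(s)$, and comparing this with $\zeta(2s)\cdot\zeta(s)^{-1}$ (or checking at prime powers directly) yields the pointwise identity $\lambda(n)=\sum_{d^2\mid n}\mu(n/d^2)$. Writing $n=d^2m$, summing over $n\leq x$, and switching the order of summation gives the clean reduction
\begin{equation}
\sum_{n\leq x}\lambda(n)=\sum_{d\leq\sqrt{x}}M\!\left(\frac{x}{d^2}\right),\qquad M(y):=\sum_{m\leq y}\mu(m).\nonumber
\end{equation}
Everything then rests on inserting the two Möbius estimates of Theorem \ref{thm2222.500} and controlling the accumulation of error over $d$.

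For the unconditional bound (i), I would first split the range of $d$ at $d=x^{1/4}$. For $d\leq x^{1/4}$ the argument satisfies $x/d^2\geq x^{1/2}$, so $\sqrt{\log(x/d^2)}\geq\tfrac{1}{\sqrt2}\sqrt{\log x}$ and Theorem \ref{thm2222.500}(i) gives $M(x/d^2)\ll(x/d^2)e^{-c'\sqrt{\log x}}$; summing against $\sum_d d^{-2}=O(1)$ produces the admissible bound $O(xe^{-c'\sqrt{\log x}})$. For $x^{1/4}<d\leq\sqrt{x}$ I would use only the trivial bound $M(y)\ll y$, which contributes $\ll x\sum_{d>x^{1/4}}d^{-2}\ll x^{3/4}$, itself $o(xe^{-c'\sqrt{\log x}})$. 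Adding the two ranges yields statement (i).

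For the conditional bound (ii), I would instead feed the RH estimate of Theorem \ref{thm2222.500}(ii), namely $M(y)\ll_\varepsilon y^{1/2+\varepsilon}$ uniformly for $y\geq1$, into the same identity, obtaining
\begin{equation}
\sum_{n\leq x}\lambda(n)\ll_\varepsilon\sum_{d\leq\sqrt{x}}\left(\frac{x}{d^2}\right)^{1/2+\varepsilon}=x^{1/2+\varepsilon}\sum_{d\leq\sqrt{x}}d^{-1-2\varepsilon}\ll_\varepsilon x^{1/2+\varepsilon},\nonumber
\end{equation}
since $\sum_d d^{-1-2\varepsilon}$ converges for every fixed $\varepsilon>0$, while the terms with $x/d^2<1$ contribute only $O(\sqrt{x})$ and are harmless.

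The only genuine subtlety, and hence the step I would be most careful about, is uniformity: both Möbius bounds must hold with implied constants independent of the argument as $d$ ranges over its full interval, including where $x/d^2$ is comparatively small. Once that is secured, no deeper input is needed, because all the arithmetic content (the zero-free region of $\zeta$ for (i), and the location of its zeros for (ii)) is already packaged inside Theorem \ref{thm2222.500}. As a cross-check, the analytic route via Perron's formula applied to $\zeta(2s)/\zeta(s)$ reveals a simple pole at $s=1/2$ coming from $\zeta(2s)$, whose residue $x^{1/2}/\zeta(1/2)$ sets the true order of magnitude; this is comfortably absorbed into both stated error terms, confirming that the bounds are consistent and cannot be pushed below the $x^{1/2}$ scale.
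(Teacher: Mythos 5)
Your proof follows essentially the same route as the paper: both substitute the identity $\lambda(n)=\sum_{d^2\mid n}\mu(n/d^2)$ to reduce the sum to $\sum_{d\le\sqrt{x}}M(x/d^2)$ and then insert the M\"obius bounds of Theorem \ref{thm2222.500}. If anything, your version is the more careful one --- the paper applies $M(x/d^2)\ll (x/d^2)e^{-c\sqrt{\log x}}$ uniformly over the entire range $d\le\sqrt{x}$ without addressing the small values of $x/d^2$ (exactly the uniformity issue you flag), whereas you split at $d=x^{1/4}$ and dispose of the tail trivially; you also supply an argument for the RH case (ii), which the paper states but does not prove.
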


\begin{proof} [\textbf{Proof}] (i) Substitute the identity $\lambda(n)=\sum_{d^2\mid n}\mu(n/d^2)$ to convert the summatory function to 
	\begin{eqnarray}\label{eq2299L.300}
		\sum_{n\leq x}\lambda(n)
		&=&\sum_{n\leq x}\sum_{d^2\mid n}\mu(n/d^2)\\
		&=&\sum_{d^2\leq x}M(x/d^2)\nonumber,
	\end{eqnarray} 
	where $M(t)=\sum_{n\leq t}\mu(n)$. Applying Theorem \ref{thm2222.500} yields
	\begin{eqnarray}\label{eq2299L.310}
		\sum_{d^2\leq x}M(x/d^2)
		&=&O\left(xe^{-c\sqrt{\log x}}\sum_{d\leq \sqrt{x}}\frac{1}{d^2}\right) \\
		&=&O\left(xe^{-c\sqrt{\log x}} \right) \nonumber,  
	\end{eqnarray} 
	where $c>0$ is an absolute constant.	
\end{proof}

\subsection{Single-Sign Patterns Liouville Characteristic Functions} \label{S8833LP}
The analysis of single-sign pattern characteristic function 
\begin{equation}\label{eq8833LP.100}
	\lambda^{\pm}(n)=\left( \frac{1\pm\lambda(n)}{2}\right)= 	\begin{cases}
		1 &\text{ if } \lambda(n)=\pm1,\\
		0 &\text{ if } \lambda(n)\ne\pm1,
	\end{cases}
\end{equation}
of the subset of integers
\begin{equation}\label{eq8833LP.110}
	\mathcal{N}_{\lambda}^{\pm}	=	\{n\geq 1: \lambda(n)=\pm\}
\end{equation} 
is well known. Here, the same idea is extended to the shifted primes.

\begin{lem}\label{lem8833LP.200A} Let $a\ne0$ be an integer, and let $\lambda(n)\in \{-1,1\}$ be the Liouville function. Then,
	\begin{eqnarray}\label{eq8833LP.200A}
		\lambda^{\pm}(a,p)&=&\left( \frac{1\pm\lambda(p+a)}{2}\right)\\	&=&
		\begin{cases}
			1 &\text{ if } \lambda(p+a)=\pm1,\\
			0 &\text{ if } \lambda(p+a)\ne\pm1,\nonumber\\
		\end{cases}
	\end{eqnarray}
	are the characteristic functions of the subset of primes
	\begin{equation}\label{eq8833LP.210A}
		\mathcal{P}_{\lambda}^{\pm }(a)	=	\{p\geq 2: \lambda(p+a)=\pm1\}.
	\end{equation} 
\end{lem}

\subsection{Single-Sign Pattern Liouville Counting Functions} \label{S8844LP}
The single-sign patterns $\lambda(n)=1$ and $\lambda(n)=-1$ single-sign pattern counting functions over the integers have the forms
\begin{equation}\label{eq8844LP.100A}
	Q^{+}(x)=\sum_{\substack{n\leq x\\ \lambda(n)=1}}1=\sum_{n\leq x}\left( \frac{1+\lambda(n)}{2}\right) =\frac{1}{2}[x]+O\left( xe^{-c\sqrt{\log x}}\right), 
\end{equation}
and 
\begin{equation}\label{eq8844LP.100B}
	Q^{-}(x)=\sum_{\substack{n\leq x\\ \lambda(n)=-1}}1=\sum_{n\leq x}\left( \frac{1-\lambda(n)}{2}\right) =\frac{1}{2}[x]+O\left(xe^{-c\sqrt{\log x}}\right), 
\end{equation}
where $[x]$ is the largest integer function, respectively. In terms of the single-sign pattern counting functions, the summatory function has the asymptotic formula
\begin{equation}\label{eq8844LP.100C}
	Q(x)=\sum_{n\leq x}\lambda(n)=Q^{+}(x)-Q^{-}(x)=O\left( xe^{-c\sqrt{\log x}}\right). 
\end{equation}
Basically, it is a form of the prime number theorem, stated in \eqref{eq8844MP.100D}. \\

The same idea extends to the single-sign patterns $\lambda(p+a)=1$ and $\lambda(p+a)=-1$ of the shifted primes. Let $Q(a,x)=\sum_{p\leq x}\lambda(p+a)$. The single-sign patterns counting functions are defined by
\begin{equation}\label{eq8844LP.110A}
	Q^{+}(a,x)=\sum_{\substack{p\leq x\\ \lambda(p+a)=1}}1=\sum_{\substack{p\leq x\\ n\in \mathcal{B}^{+}(a)}}1,
\end{equation}
\begin{equation}\label{eq8844LP.110B}
	Q^{-}(a,x)=\sum_{\substack{p\leq x\\ \lambda(p+a)=-1}}1=\sum_{\substack{p\leq x\\ p\in \mathcal{B}^{-}(a)}}1.
\end{equation}

The single-sign patterns counting functions \eqref{eq8844LP.110A} to \eqref{eq8844LP.110B} are precisely the cardinalities of the subsets of integers
\begin{multicols}{2}
	\begin{enumerate}
		\item $\mathcal{P}_{\lambda}^{+}(a)\subset \tP$ ,
		\item $\mathcal{P}_{\lambda}^{-}(a)\subset \tP$ ,
	\end{enumerate}
\end{multicols}
defined in \eqref{eq8833LP.210A}. The symbol $\tP =\{2,3,5,\ldots, \}$ denotes the set of prime numbers. \\

The next result is required to complete the analysis of the asymptotic formula for $Q(a,x)=\sum_{p\leq x}\lambda(p+a)$, which is completed in the next section.

\begin{lem}\label{lem8844LP.300A} Let $x\geq1$ be a large number, and let $a\ne0$ be a fixed integer. Then, 
	\begin{equation}\label{eq8844LP.300A}
		Q^{\pm}(a,x)= \frac{1}{2}\li(x)+\frac{1}{2}Q(a,x)+O\left(xe^{-c\sqrt{\log x}} \right) ,\nonumber
	\end{equation}
where $c>0$ ia an absolute constant.
\end{lem}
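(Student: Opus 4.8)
The plan is to imitate the proof of Lemma \ref{lem8844MP.300A}, the Mobius analogue, but the argument is markedly shorter here because the Liouville function never vanishes, so no squarefree factor $\mu^2$ intervenes and the ``trivial part'' becomes the full prime count rather than a density-weighted one. Consider first the pattern $\lambda(p+a)=+1$. Using Lemma \ref{lem8833LP.200A} to express the counting function through its characteristic function $\lambda^{+}(a,p)=(1+\lambda(p+a))/2$, I would write
\[
2Q^{+}(a,x)=\sum_{p\leq x}\bigl(1+\lambda(p+a)\bigr)=\sum_{p\leq x}1+\sum_{p\leq x}\lambda(p+a)=\pi(x)+Q(a,x).
\]
Next I would recognize the trivial sum as $\pi(x)$ and invoke the Prime Number Theorem in the form \eqref{eq8844MP.100D}, namely $\pi(x)=\li(x)+O\bigl(xe^{-c\sqrt{\log x}}\bigr)$, to obtain
\[
2Q^{+}(a,x)=\li(x)+Q(a,x)+O\bigl(xe^{-c\sqrt{\log x}}\bigr).
\]
Dividing by $2$ then yields the stated formula for $Q^{+}(a,x)$, with $c>0$ the absolute constant furnished by the Prime Number Theorem.

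For the opposite pattern $\lambda(p+a)=-1$, I would start instead from $\lambda^{-}(a,p)=(1-\lambda(p+a))/2$, which produces $2Q^{-}(a,x)=\pi(x)-Q(a,x)$ and hence the analogous formula, in which the sign of the $\tfrac{1}{2}Q(a,x)$ term follows the superscript $\pm$. Both single-sign cases are thereby covered, exactly paralleling the treatment of $R^{\pm}(a,x)$ in the Mobius setting.

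The key observation is that there is essentially no obstacle to overcome. In contrast to the Mobius case, where the bulk term $\sum_{p\leq x}\mu^2(p+a)$ demanded the squarefree-shifted-prime asymptotic of Theorem \ref{thm8009MP.350A} together with the density constant $s_0$, here the only genuine arithmetic input is the Prime Number Theorem, since the sign-pattern count splits cleanly into the full prime count $\pi(x)$ and the summatory Liouville function $Q(a,x)$. The latter is simply carried along unevaluated, so the lemma is an exact identity up to the classical PNT error term; the deeper analysis of $Q(a,x)$ itself is deferred to the subsequent proof of Theorem \ref{thm1212LP.200}, where Hypothesis \ref{hyp2225P.550} supplies the required cancellation.
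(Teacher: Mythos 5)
Your proposal is correct and follows essentially the same route as the paper: expand $2Q^{\pm}(a,x)=\sum_{p\le x}(1\pm\lambda(p+a))=\pi(x)\pm Q(a,x)$ via the characteristic function of Lemma \ref{lem8833LP.200A}, then apply the Prime Number Theorem in the form \eqref{eq8844MP.100D} and divide by $2$. Your remark that the $Q(a,x)$ term should carry the sign $\pm$ matching the superscript is a small but genuine correction to the lemma as stated, and the cancellation of that term in $Q^{+}-Q^{-}$ is exactly what the subsequent argument uses.
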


\begin{proof}[\textbf{Proof}] Consider the pattern $\lambda(p+a)=+1$. Now, use Lemma \ref{lem8833LP.200A} to express the single-sign pattern counting function as 
	\begin{eqnarray}   \label{eq442LP.310A}
		2Q^{+}(a,x)&=&\sum_{p\leq x}\lambda^{+}(a,p)\\
		&=&\sum_{p\leq x}\left( 1+\lambda(p+a)\right)\nonumber\\
		&=&\sum_{p\leq x}1+\sum_{p\leq x}\lambda(p+a)\nonumber.
	\end{eqnarray}
The first finite sum is $\sum_{p\leq x}1=\pi(x)$, see \eqref{eq8844MP.100D} for more details, and the second sum is $Q(a,x)/2$.
Summing these evaluations or estimates verifies the claim for $Q^{+}(a,x)\geq0$. The verification for the next single-sign pattern counting function $Q^{-}(a,x)\geq0$, is similar.
\end{proof}

\subsection{Result for Liouville Summatory Function} \label{S1616LP}
The analysis of the average order over the shifted primes

\begin{equation}\label{eq1616LP.200A}
	Q(a,x)=	\sum_{p \leq x} \lambda(p+a)
\end{equation}
is currently viewed as an intractable problem, the restricted double average order 
\begin{equation}\label{eq1616LP.210A}
	\sum_{a \leq z}\sum_{p \leq x} \lambda(p+a)=o\left( z\pi(x)\right) 
\end{equation}

is the only result available in the literature, see \cite{PJ2015}, and \cite[Theorem 1.1]{LJ2021} for the exact details. However, the introduction of the single-sign counting function $Q^{+}(a,x)$ and $Q^{-}(a,x)$ transforms the problem into an elementary problem. 

\begin{proof} [\textbf{Proof}]({\bfseries Theorem \ref{thm1212LP.200}}) Let $x_0=x^{1/2-\varepsilon}$, with $\varepsilon>0$. Utilize the identity $\lambda(n)=\sum_{d^2\mid n}\mu(n/d^2)$ to rewriting the Liouville summatory function in terms of the Mobius function, and partition the finite sum.
\begin{eqnarray}\label{eq1616LP.220A}
Q(a,x)&=&\sum_{p\leq x}\lambda(p+a)\\
&=&\sum_{p\leq x}\sum_{d^2\mid p+a}\mu((p+a)/d^2) \nonumber\\
&=&\sum_{d^2\leq x} \sum_{\substack{p\leq x\\d^2\mid p+a}}\mu((p+a)/d^2)\nonumber\\
&=&\sum_{d^2\leq x_0} \sum_{\substack{p\leq x\\d^2\mid p+a}}\mu((p+a)/d^2)+\sum_{x_0<d^2\leq x} \sum_{\substack{p\leq x\\d^2\mid p+a}}\mu((p+a)/d^2)\nonumber. 
\end{eqnarray} 
By Hypothesis \ref{hyp2225P.550}, the first term has the upper bound
\begin{eqnarray}\label{eq1616LP.230}
\sum_{d^2\leq x_0} \sum_{\substack{p\leq x\\d^2\mid p+a}}\mu((p+a)/d^2)&\leq&\sum_{d^2\leq x_0} \bigg | \sum_{\substack{p\leq x\\d^2\mid p+a}}\mu((p+a)/d^2) \bigg |\\
	&=&\left( \frac{x}{(\log x)^c}\right) \nonumber,
\end{eqnarray} 
where $c>0$ is an arbitrary constant. The second term has the upper bound
\begin{eqnarray}\label{eq1616LP.240}
\sum_{x_0<d^2\leq x} \sum_{\substack{p\leq x\\d^2\mid p+a}}\mu((p+a)/d^2)&\leq&\sum_{x_0<d^2\leq x} \sum_{\substack{p\leq x\\d^2\mid p+a}}1\\
&\leq&x\sum_{x_0<d^2\leq x}\frac{1}{d^2} \nonumber\\
	&=&O\left( x^{1/2+\varepsilon}\right)\nonumber. 
\end{eqnarray} 
The sum of \eqref{eq1616LP.230} and \eqref{eq1616LP.240} completes the verification.

\end{proof}

\subsection{Result for the Liouville-Mobius Correlation Function} \label{S1616LM}

\begin{thm} \label{thm1616LM.900} Let $c>1$ be an arbitrary constant, and let $x>1$ be a large number. If $a \ne0$ is a small fixed integer, then
	\begin{equation} \label{eq1616LM.900}
		\sum_{n \leq x} \mu(n)\lambda(n+a) =O \left (\frac{x}{(\log x)^{c}} \right )\nonumber. 
	\end{equation}	
\end{thm}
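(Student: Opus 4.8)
The plan is to mirror the reductions used for the summatory functions in Subsections \ref{S8111MP} and \ref{S1616LP}, exploiting the elementary identity $\mu(n)=\lambda(n)\mu^2(n)$, which holds because $\mu$ and $\lambda$ agree on the squarefree integers while $\mu$ vanishes on the rest. Writing $S(a,x)=\sum_{n\le x}\mu(n)\lambda(n+a)$ and substituting this identity together with $\mu^2(n)=\sum_{d^2\mid n}\mu(d)$, then switching the order of summation, gives
\begin{equation}
S(a,x)=\sum_{n\le x}\mu^2(n)\lambda(n)\lambda(n+a)=\sum_{d^2\le x}\mu(d)\sum_{\substack{n\le x\\ n\equiv 0 \bmod d^2}}\lambda(n)\lambda(n+a). \nonumber
\end{equation}
The inner sum is now a two-point Liouville autocorrelation restricted to the progression $n\equiv 0\pmod{d^2}$, which is exactly the type of object that Hypothesis \ref{hyp2225P.560} is designed to control.

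First I would partition the outer sum at $x_0=(\log x)^{2c}$, as in the proof of Theorem \ref{thm8009MP.350A}. For the tail $x_0<d^2\le x$ I would discard the oscillation of the inner correlation and use the trivial bound $\bigl|\sum_{n\le x,\; d^2\mid n}\lambda(n)\lambda(n+a)\bigr|\le x/d^2$, so that
\begin{equation}
\Bigl|\sum_{x_0<d^2\le x}\mu(d)\sum_{\substack{n\le x\\ n\equiv 0\bmod d^2}}\lambda(n)\lambda(n+a)\Bigr|\ll x\sum_{d>\sqrt{x_0}}\frac{1}{d^2}\ll \frac{x}{\sqrt{x_0}}=\frac{x}{(\log x)^{c}}. \nonumber
\end{equation}
This already matches the target error term, so the entire contribution of the large moduli is admissible.

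For the main range $d^2\le x_0$ the plan is to invoke a two-point, integer-valued analogue of Hypothesis \ref{hyp2225P.560}: summing the absolute values of the inner correlations against the weights $|\mu(d)|\le 1$ over the moduli $q=d^2\le x_0$ should be $O\!\left(x/(\log x)^{c}\right)$. Since $x_0=(\log x)^{2c}$ lies far below the level $x^{1/2}/\log^B x$ appearing in the hypothesis, and since the squares $q=d^2\le x_0$ form a subset of all moduli $q\le x_0$, the estimate needed here is a very weak special case of such a mean value; combining it with the tail bound then completes the argument.

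The main obstacle is precisely this inner estimate for small moduli. Even the single term $d=1$ is the unrestricted correlation $\sum_{n\le x}\lambda(n)\lambda(n+a)$, whose cancellation is a Chowla-type statement not known unconditionally in the natural-density formulation. Thus the proof cannot avoid an autocorrelation input: the substantive work is to establish, uniformly over the progressions $n\equiv 0\pmod{d^2}$ with $d\le(\log x)^{c}$, the required savings for $\sum_{n\equiv 0 (d^2)}\lambda(n)\lambda(n+a)$. I would supply this either from the integer form of Hypothesis \ref{hyp2225P.560} (recording that, since the present statement carries no hypothesis, it should strictly be read as conditional on this input) or from the unconditional sign-pattern analysis of the Liouville autocorrelation over the integers developed in Section \ref{S7979LN}; granting that input, the two displayed steps above are routine.
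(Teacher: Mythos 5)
Your decomposition diverges from the paper's at the very first step, and the divergence is what creates the gap. You expand the factor $\mu(n)$ as $\lambda(n)\mu^2(n)=\lambda(n)\sum_{d^2\mid n}\mu(d)$, which lands you on the inner sums $\sum_{n\le x,\ d^2\mid n}\lambda(n)\lambda(n+a)$ --- two-point Liouville correlations restricted to progressions. As you yourself observe, already the term $d=1$ is the full two-point Chowla sum, and no result in the paper controls these objects: Hypothesis \ref{hyp2225P.560} is stated for correlations over \emph{shifted primes}, not over integers in a progression, and Theorem \ref{thm7979LN.200} treats only the unrestricted sum $\sum_{n\le x}\lambda(n+a)\lambda(n+b)$ with no congruence condition $d^2\mid n$. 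So the "very weak special case of such a mean value" you invoke for the range $d^2\le(\log x)^{2c}$ is not a special case of anything established or even hypothesized in the paper; your argument is conditional on an input the statement does not carry, whereas the theorem is asserted (and proved) unconditionally.

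The paper instead expands the \emph{other} factor, writing $\lambda(n+a)=\sum_{d^2\mid n+a}\mu((n+a)/d^2)$, so that after switching summation the oscillation left in the inner sum is that of the single function $\mu(n)$ over the progression $n\equiv-a\pmod{d^2}$. For moduli $d^2\le(\log x)^{2c}$ this is controlled unconditionally by the Siegel--Walfisz-type bound for $\mu$ in arithmetic progressions (Theorem \ref{thmMF222.090}), and the tail $d^2>(\log x)^{2c}$ is handled trivially exactly as in your proposal. The moral is that the two expansions are not symmetric: pushing the square-divisor sum onto the shifted argument decouples the problem into a one-function sum over progressions (classical input suffices), while pushing it onto the unshifted argument preserves the two-point correlation structure (Chowla-type input required). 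To repair your proof you would either need to switch to the paper's decomposition, or supply an arithmetic-progression version of the two-point Liouville correlation bound, which is strictly stronger than anything available here.
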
	

\begin{proof} [\textbf{Proof}] Let $x_0=(\log x)^{2c}$, with $c>0$ is arbitrary. Rewriting the summatory Mobius-Liouville correlation function in terms of the Mobius function, and partition the finite sum.
	\begin{eqnarray}\label{eq1616LM.220A}
D(a,x)&=&\sum_{n\leq x}\mu(n)\lambda(n+a)\\
		&=&\sum_{n\leq x}\mu(n)\sum_{d^2\mid n+a}\mu((n+a)/d^2) \nonumber\\
		&=&\sum_{d^2\leq x}\mu((n+a)/d^2) \sum_{\substack{n\leq x\\d^2\mid n+a}}\mu(n)\nonumber\\
		&=&\sum_{d^2\leq x_0}\mu((n+a)/d^2) \sum_{\substack{n\leq x\\d^2\mid n+a}}\mu(n)+\sum_{x_0<d^2\leq x}\mu((n+a)/d^2) \sum_{\substack{n\leq x\\d^2\mid n+a}}\mu(n)\nonumber. 
	\end{eqnarray} 
	Applying Theorem \ref{thmMF222.090} to the first term yields the asymptotic formula
	\begin{eqnarray}\label{eq1616LM.230}
\sum_{d^2\leq x_0}\mu((n+a)/d^2) \sum_{\substack{n\leq x\\d^2\mid n+a}}\mu(n)&=&\sum_{d^2\leq x_0}\mu((n+a)/d^2) \cdot O\left(\frac{x}{d^2}e^{-c_0\sqrt{\log x}} \right) \nonumber\\
		&=&O\left(xe^{-c_0\sqrt{\log x}} \sum_{d^2\leq x}\frac{1}{d^2}\right) \\
		&=&O\left(xe^{-c_0\sqrt{\log x}} \right)\nonumber, 
	\end{eqnarray} 
where $c_0>0$ is an absolute constant. The second term has the upper bound	
	\begin{eqnarray}\label{eq1616LM.240}
	\sum_{x_0<d^2\leq x}\mu((n+a)/d^2) \sum_{\substack{n\leq x\\d^2\mid n+a}}\mu(n)&\leq&\sum_{x_0<d^2\leq x,} \sum_{\substack{n\leq x\\d^2\mid n+a}}1\\
		&\leq&x\sum_{x_0<d^2\leq x} \frac{1}{d^2}\nonumber\\
		&=&O\left(\frac{x}{(\log x)^c} \right)    \nonumber,  
	\end{eqnarray} 
	where $c>0$ is an arbitrary constant.
\end{proof}

\subsection{Double-Sign Patterns Liouville Characteristic Functions} \label{S8855LP}
The analysis of single-sign pattern characteristic functions is extended here to the double-sign patterns
\begin{equation}\label{eq8855LP.100}
	\lambda(p+a)=\pm1 \quad \text{ and }\quad 	\lambda(p+b)=\pm1, 
\end{equation}
where $a,b\ne0$ such that $a\ne b$, and $p\geq2$ is prime. The current research has emphasized the sign patterns over the integers, confer \cite{HA1986}, \cite[Corollary 1.7]{TT2015}, \cite{KP1986}, \cite{SA2022}, and there is some literature on the sign patterns over the shifted primes, \cite{PJ2015}, and similar literature, for details.

\begin{lem}\label{lem8855LP.200} Let $a,b\in \Z$ such that $a\ne b$ be small fixed integers, and let $\lambda(n)\in \{-1,1\}$ be the Liouville function. Then,
	\begin{eqnarray}\label{eq8855LP.200}
		\lambda^{\pm \pm}(a,b,p)&=&\left( \frac{1\pm \lambda(p+ a)}{2}\right)\left( \frac{1\pm\lambda(p+b)}{2}\right)\\	&=&
		\begin{cases}
			1 &\text{ if } \lambda(p+a)=\pm1,\mu(p+b)=\pm1,\\
			0 &\text{ if } \lambda(p+a)\ne\pm1,\mu(p+b)\ne\pm1,\nonumber\\
		\end{cases}
	\end{eqnarray}
	are the characteristic functions of the subset of integers
	\begin{equation}\label{eq8855LP.210}
		\mathcal{P_{\lambda}}^{\pm \pm}(a,b)	=	\{p\geq 2: \lambda(p+a)=\pm1, \lambda(p+b)=\pm1\}.
	\end{equation} 
\end{lem}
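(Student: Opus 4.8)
The plan is to verify the identity directly by exhausting the possible values of the Liouville function at the two shifted arguments. The crucial feature, which distinguishes the Liouville case from the Mobius case treated in Lemma \ref{lem8855MP.200}, is that $\lambda(n)\in\{-1,1\}$ takes only two values and never vanishes. Consequently no squarefree indicator $\mu^2$ is needed as a pre-factor: each linear expression $(1\pm\lambda(p+c))/2$ is already a clean two-valued indicator, so the argument is purely a matter of recognizing a product of indicators as the indicator of an intersection.

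First I would record the single-factor behaviour. Fix a prime $p$ large enough that $p+a\geq 1$ and $p+b\geq 1$, which holds for all but finitely many $p$ since $a,b$ are fixed. For the $+$ choice the factor $(1+\lambda(p+a))/2$ equals $1$ when $\lambda(p+a)=+1$ and equals $0$ when $\lambda(p+a)=-1$; as these are the only two possibilities, this factor is exactly the indicator of the event $\lambda(p+a)=+1$. The analogous statement holds for the $-$ choice and for the argument $p+b$, producing four indicator functions, one for each sign selection.

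Next I would multiply the two chosen factors. A product of two $\{0,1\}$-valued indicators equals $1$ precisely when both are $1$ and equals $0$ otherwise, so $\lambda^{\pm\pm}(a,b,p)$ equals $1$ exactly when both sign conditions $\lambda(p+a)=\pm1$ and $\lambda(p+b)=\pm1$ hold simultaneously, and equals $0$ otherwise. This is by definition the characteristic function of the set $\mathcal{P_{\lambda}}^{\pm \pm}(a,b)$ in \eqref{eq8855LP.210}, which completes the verification; one applies the same computation verbatim to each of the four sign patterns $(+,+)$, $(+,-)$, $(-,+)$, $(-,-)$.

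The only step requiring any attention at all is the bookkeeping of the finitely many small primes for which $p+a$ or $p+b$ fails to be a positive integer; these are irrelevant to the asymptotic applications and may be excluded without affecting the statement. Beyond this there is no genuine obstacle: the lemma is a structural identity that follows immediately from the two-valuedness of $\lambda$, exactly paralleling the Mobius version of Lemma \ref{lem8855MP.200} but without the squarefree correction factor.
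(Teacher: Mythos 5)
Your verification is correct: since $\lambda$ never vanishes, each factor $(1\pm\lambda(p+c))/2$ is a genuine $\{0,1\}$-indicator and the product is the indicator of the intersection, which is exactly the content of the lemma. The paper states this lemma without proof, treating it as evident, and your argument is precisely the reasoning it leaves implicit (note only that the displayed cases in the paper contain a typo, writing $\mu(p+b)$ where $\lambda(p+b)$ is meant, which you have correctly read through).
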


\subsection{Double-Sign Patterns in Liouville Counting Functions} \label{S8877LP}
The counting functions for the single-sign patterns $\lambda(p+a)=1$ and $\lambda(p+a)=-1$ are extended to the counting functions for the double-sign patterns 
\begin{equation}\label{eq8877LP.100}
	(\lambda(p+a),\lambda(p+b))=(\pm1,\pm1).
\end{equation}

The double-sign patterns counting functions are defined by
\begin{equation}\label{eq8877LP.110A}
	Q^{++}(a,b,x)=\sum_{\substack{p\leq x\\ \lambda(p+a)=1,\; \lambda(p+b)=1}}1=\sum_{\substack{p\leq x\\ p\in \mathcal{P_{\lambda}}^{++}(a,b)}}1,
\end{equation}
\begin{equation}\label{eq8877LP.110B}
	Q^{+-}(a,b,x)=\sum_{\substack{p\leq x\\ \lambda(p+a)=1,\; \lambda(p+b)=-1}}1=\sum_{\substack{p\leq x\\ p\in \mathcal{P_{\lambda}}^{+-}(a,b)}}1,
\end{equation}
\begin{equation}\label{eq8877LP.110C}
	Q^{-+}(a,b,x)=\sum_{\substack{p\leq x\\ \lambda(p+a)=-1,\; \lambda(p+b)=1}}1=\sum_{\substack{p\leq x\\ p\in \mathcal{P_{\lambda}}^{-+}(a,b)}}1,
\end{equation}
\begin{equation}\label{eq8877LP.110D}
	Q^{- -}(a,b,x)=\sum_{\substack{p\leq x\\ \lambda(p+a)=-1,\; \lambda(p+b)=-1}}1=\sum_{\substack{p\leq x\\ p\in \mathcal{P_{\lambda}}^{- -}(a,b)}}1.
\end{equation}

The double-sign patterns counting functions \eqref{eq8877LP.110A} to \eqref{eq8877LP.110D} are precisely the cardinalities of the subsets of integers
\begin{multicols}{2}
	\begin{enumerate}
		\item $\mathcal{P_{\lambda}}^{++}(a,b)\subset \tP$ ,
		\item $\mathcal{P_{\lambda}}^{+-}(a,b)\subset \tP$ ,
		\item $\mathcal{P_{\lambda}}^{-+}(a,b)\subset \tP$ ,
		\item $\mathcal{P_{\lambda}}^{--}(a,b)\subset \tP$ ,
	\end{enumerate}
\end{multicols}
defined in \eqref{eq8855LP.210}. The symbol $\tP =\{2,3,5,\ldots, \}$ denotes the set of prime numbers. In terms of the double-sign patterns counting functions, the autocorrelation function has form 
\begin{eqnarray}\label{eq8877L.120}
	Q(a,b,x)&=&\sum_{p\leq x}\lambda(p+a)\lambda(p+b)\\
	&=&Q^{++}(a,b,x)-Q^{+-}(a,b,x)+Q^{--}(a,b,x)-Q^{-+}(a,b,x).\nonumber
\end{eqnarray}
The next result is required to complete the analysis of the asymptotic formula for $Q(a,b,x)$, which is completed in the next section.

\begin{lem}\label{lem8877LP.300} Let $x\geq1$ be a large number, and let $a,b\in \Z$ such that $a\ne b$, be fixed integers. If $\lambda: \mathbb{Z} \longrightarrow \{-1,1\}$ is the Liouville function, then, 
	\begin{equation}\label{eq8877LP.300}
		Q^{\pm \pm}(a,b,x)= \frac{1}{4}\li(x)+ \frac{1}{4}Q(a,b,x)+O\left(xe^{-c\sqrt{\log x}} \right) ,\nonumber
	\end{equation}
	where $c>0$ is an absolute constant.
\end{lem}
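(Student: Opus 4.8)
The plan is to mimic exactly the structure of the Mobius double-sign counting lemma (Lemma \ref{lem8877MP.300}), but with the crucial simplification that the Liouville function takes no zero value, so the squarefree indicators $\mu^2(p+a)$ disappear entirely. First I would fix, without loss of generality, the pattern $(\lambda(p+a),\lambda(p+b))=(+1,+1)$ and use the characteristic-function identity from Lemma \ref{lem8855LP.200} to write
\begin{eqnarray*}
4Q^{++}(a,b,x)&=&\sum_{p\leq x}\bigl(1+\lambda(p+a)\bigr)\bigl(1+\lambda(p+b)\bigr)\\
&=&\sum_{p\leq x}1+\sum_{p\leq x}\lambda(p+a)+\sum_{p\leq x}\lambda(p+b)+\sum_{p\leq x}\lambda(p+a)\lambda(p+b).
\end{eqnarray*}

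Next I would evaluate each of the four resulting sums. The first is simply $\pi(x)=\li(x)+O(xe^{-c\sqrt{\log x}})$ by the Prime Number Theorem \eqref{eq8844MP.100D}. The two linear sums $\sum_{p\leq x}\lambda(p+a)$ and $\sum_{p\leq x}\lambda(p+b)$ are each $O(xe^{-c\sqrt{\log x}})$ by Theorem \ref{thm1212LP.200} (the summatory Liouville result over the shifted primes, already proved in Subsection \ref{S1616LP}). The fourth sum is by definition $Q(a,b,x)=\sum_{p\leq x}\lambda(p+a)\lambda(p+b)$, which I leave as the correlation term on the right-hand side. Collecting these and dividing by $4$ yields $Q^{++}(a,b,x)=\tfrac14\li(x)+\tfrac14 Q(a,b,x)+O(xe^{-c\sqrt{\log x}})$, which is the claimed formula for the $(+,+)$ pattern.

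Finally I would remark that the other three patterns are handled identically: for a general sign pattern the expansion of $\bigl(1\pm\lambda(p+a)\bigr)\bigl(1\pm\lambda(p+b)\bigr)$ always produces the constant term contributing $\pi(x)$, two linear Liouville sums controlled by Theorem \ref{thm1212LP.200}, and a product term whose sign matches the pattern, so that in every case the error is $O(xe^{-c\sqrt{\log x}})$ and the correlation term carries the appropriate coefficient $\pm\tfrac14$; this is exactly the content absorbed into the compact $\pm$ notation of the statement.

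I do not anticipate a genuine obstacle here, since this lemma is purely a bookkeeping consequence of results established earlier in the section. The only point requiring mild care is verifying that the error term stays at the sharper level $O(xe^{-c\sqrt{\log x}})$ rather than degrading to $O(x(\log x)^{-c})$: this is exactly why the lemma is stated unconditionally and with the stronger error bound, and it works precisely because the linear Liouville sums obey Theorem \ref{thm1212LP.200} with that sharper error, whereas the corresponding Mobius lemma had to settle for the weaker polynomial-logarithmic savings coming from the squarefree counting term $\sum_{p\leq x}\mu^2(p+a)\mu^2(p+b)=s_0^2\li(x)+O(x(\log x)^{-c})$.
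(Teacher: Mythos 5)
Your expansion of $4Q^{++}(a,b,x)=\sum_{p\leq x}\bigl(1+\lambda(p+a)\bigr)\bigl(1+\lambda(p+b)\bigr)$ into $\pi(x)$, two linear shifted-prime Liouville sums, and the correlation term $Q(a,b,x)$ is exactly the paper's argument (equation \eqref{eq8877LP.310}), and the bookkeeping for the remaining three sign patterns is handled the same way. So in structure the two proofs coincide.

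The genuine gap is in the treatment of the linear sums, and it is precisely the point you flag as the ``only point requiring mild care.'' You invoke Theorem \ref{thm1212LP.200} to claim $\sum_{p\leq x}\lambda(p+a)=O\left(xe^{-c\sqrt{\log x}}\right)$ unconditionally. But Theorem \ref{thm1212LP.200} is conditional on Hypothesis \ref{hyp2225P.550}, so your proof delivers only a conditional version of the lemma, whereas the statement asserts an absolute constant with no hypothesis; worse, the proof of that theorem in Subsection \ref{S1616LP} actually produces an error of size $O\left(x(\log x)^{-c}\right)+O\left(x^{1/2+\varepsilon}\right)$, so even granting the hypothesis the bound $O\left(xe^{-c\sqrt{\log x}}\right)$ for the linear sums is not established anywhere. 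The paper's own proof instead cites Theorem \ref{thm2299.500}, which is unconditional but concerns $\sum_{n\leq x}\lambda(n)$ over all integers, not $\sum_{p\leq x}\lambda(p+a)$ over shifted primes, so that citation is a non sequitur; the sum $\sum_{p\leq x}\lambda(p+a)$ is exactly the hard quantity the paper can only treat conditionally. Either way, the claimed unconditional error term $O\left(xe^{-c\sqrt{\log x}}\right)$ in the lemma is not supported: your route makes the lemma (and everything downstream, including Theorem \ref{thm1212LP.700}) conditional on Hypothesis \ref{hyp2225P.550} and still does not justify the exponential-savings error term.
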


\begin{proof}[\textbf{Proof}] Without loss in generality, consider the pattern $(\lambda(p+a),\lambda(p+b))=(+1,+1)$. Now, use Lemma \ref{lem8855LP.200} to express the double-sign pattern counting function as 
	\begin{eqnarray}   \label{eq8877LP.310}
		4Q^{++}(a,b,x)&=&\sum_{p\leq x}\lambda^{++}(a,b,p)\\
		&=&\sum_{p\leq x}\left( 1+\lambda(p+a)\right) \left( 1+\lambda(p+b)\right)\nonumber\\
		&=&\sum_{p\leq x}\left( 1+\lambda(p+a)+\lambda(p+b)+\lambda(p+a)\lambda(p+b)\right)\nonumber\\
		&=&\sum_{p\leq x}1+\sum_{p\leq x}\lambda(p+a)  +\sum_{p\leq x}\lambda(p+b)+\sum_{p\leq x}\lambda(p+a)\lambda(p+b)\nonumber\\
		&\geq&0\nonumber.
	\end{eqnarray}
	The first three finite sums on the last line have the following evaluations or estimates.
	\begin{enumerate}
		\item $ \displaystyle \sum_{p\leq x}1=\li(x)+O\left(xe^{-c\sqrt{\log x}} \right) , $ \tabto{8cm}confer \eqref{eq8844MP.100D} for more details,\\
		\item $ \displaystyle \sum_{p\leq x}\lambda(p+a)=O \left (xe^{-c\sqrt{\log x}}\right )$, \tabto{8cm}see Theorem \ref{thm2299.500},\\
		\item $ \displaystyle \sum_{p\leq x}\lambda(p+b)=O \left (xe^{-c\sqrt{\log x}}\right )$, \tabto{8cm}see Theorem \ref{thm2299.500}.
	\end{enumerate}
In all cases $c>0$ is an absolute constant, unconditionally. Summing these evaluations or estimates verifies the claim for $Q^{++}(a,b,x)\geq0$. The verifications for the next three double-sign pattern counting functions $Q^{+-}(a,b,x)\geq0$, $Q^{-+}(a,b,x)\geq0$, and $Q^{--}(a,b,x)\geq0$ are similar.
\end{proof}

\subsection{Result for the Liouville Autocorrelation Function} \label{S2626LP}
Unlike the conditional result for the Mobius autocorrelation function in Theorem \ref{thm1212LP.700}, the matching result for the Liouville autocorrelation function is unconditional, and has a sharper error term.

\begin{proof}[\textbf{Proof}] ({\bfseries Theorem \ref{thm1212LP.700}}) Utilize the identity $\lambda(n)=\sum_{d^2\mid n}\mu(n/d^2)$ to rewriting the Liouville summatory function in terms of the Mobius function.
	\begin{eqnarray}\label{eq2626LP.220A}
		Q(a,b,x)&=&\sum_{p\leq x}\lambda(p+a)\lambda(p+b)\\
		&=&\sum_{p\leq x} \sum_{d^2\mid p+a}\mu((p+a)/d^2)\sum_{e^2\mid p+b}\mu((p+b)/e^2)\nonumber\\
&=& \sum_{d^2\leq x} \sum_{e^2\leq x} \sum_{\substack{p\leq x\\ d^2\mid p+a\\e^2\mid p+b}}\mu((p+a)/d^2)\mu((p+b)/e^2) \nonumber.
	\end{eqnarray} 
Let $x_0\leq x^{1/8-\varepsilon}$, with $\varepsilon>0$, and partition the finite sum.\\

Applying Hypothesis \ref{hyp2225P.560} to the first term yields the upper bound
\begin{eqnarray}\label{eq2626LP.230A}
	T_0(a,b,x)&=&\sum_{d^2\leq x_0}\sum_{e^2\leq x_0}\sum_{\substack{p\leq x\\d^2\mid p+a\\e^2\mid p+b}}\mu((p+a)/d^2)\mu((p+b)/e^2)\\
	&\leq&\sum_{q\leq x_0^4}\bigg |\sum_{\substack{p\leq x\\p\equiv f\bmod q}}\mu(p+a) \mu(p+b) \bigg | \nonumber\\ 
	&=&O\left(\frac{x}{(\log x)^c} \right) ,
\end{eqnarray}
where $q=\lcm(d^2,e^2)\leq x_0^4\leq x^{1/2-\varepsilon}$, and $c>1$ is an arbitrary constant.\\

The second term has the upper bound
\begin{eqnarray}\label{eq2626LP.240A}
	T_1(a,b,x)&=&\sum_{d^2\leq x_0}\sum_{x_0<e^2\leq x}\sum_{\substack{p\leq x\\d^2\mid p+a\\e^2\mid p+b}}\mu((p+a)/d^2)\mu((p+b)/e^2)\\
	&\leq&\sum_{x_0<d^2\leq x,}\sum_{x_0<e^2\leq x}\sum_{\substack{p\leq x\\d^2\mid p+a\\e^2\mid p+b}}1  \nonumber\\
	&=&O\left( x^{1-\varepsilon}\right) \nonumber. 
\end{eqnarray}
The third term has the upper bound
\begin{eqnarray}\label{eq2626LP.250A}
	T_2(a,b,x)&=&\sum_{x_0<d^2\leq x}\sum_{e^2\leq x_0}\sum_{\substack{p\leq x\\d^2\mid p+a\\e^2\mid p+b}}\mu((p+a)/d^2)\mu((p+b)/e^2)\\
	&\leq&\sum_{x_0<d^2\leq x,}\sum_{e^2\leq x_0}\sum_{\substack{p\leq x\\d^2\mid p+a\\e^2\mid p+b}}1  \nonumber\\
	&=&O\left( x^{1-\varepsilon}\right)\nonumber.
\end{eqnarray} 
The fourth term has the upper bound
\begin{eqnarray}\label{eq2626LP.260A}
	T_3(a,b,x)&=&\sum_{x_0<d^2\leq x}\sum_{x_0<e^2\leq x}\sum_{\substack{p\leq x\\d^2\mid p+a\\e^2\mid p+b}}\mu((p+a)/d^2)\mu((p+b)/e^2)\\
	&\leq&\sum_{x_0<d^2\leq x,}\sum_{x_0<e^2\leq x_0}\sum_{\substack{p\leq x\\d^2\mid p+a\\e^2\mid p+b}}1  \nonumber\\
	&=&O\left( x^{1-\varepsilon}\right)\nonumber.
\end{eqnarray}  
Summing \eqref{eq2626LP.230A} to \eqref{eq2626LP.260A} completes the proof.
\end{proof}

\section{Hardy-Littlewood-Chowla Conjecture}\label{S5511}
The Hardy-Littlewood-Chowla Conjecture is a hybrid of the Hardy-Littlewood conjecture, see \cite{DA1849}, \cite[Conjecture B]{HL1923}, and the Chowla conjecture, see \cite{CS1965}, \cite{RO2018}. A version of the merged conjectures has the structure described below.

\begin{conj}\label{conj5511.200} Let $x\geq1$ be a large number, let $a_0,a_1,\ldots, a_{k-1}$ be a fixed admissible $k$-tuple, and let $b_0<b_1<\cdots<b_{l-1}$ be a fixed subset of small integers. Then, 
	\begin{equation}\label{eq5511.200}
		\sum_{n\leq x}\Lambda(n+a_0)\Lambda(n+a_1)\cdots \Lambda(n+a_{k-1})\mu(n+b_0)\mu(n+b_1)\cdots\mu(n+b_{l-1})= o(x) \nonumber.
	\end{equation}
\end{conj}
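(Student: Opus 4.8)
The plan is to read this conjecture as a simultaneous strengthening of the shifted-prime results of the present paper. The von Mangoldt factors $\Lambda(n+a_0)\cdots\Lambda(n+a_{k-1})$ restrict $n$ so that the tuple $(n+a_0,\dots,n+a_{k-1})$ consists essentially of primes, while the Möbius factors $\mu(n+b_0)\cdots\mu(n+b_{l-1})$ must then exhibit cancellation over this prime-restricted set. Indeed, in the smallest nontrivial case $k=l=1$ with $a_0=0$, partial summation converts $\sum_{n\le x}\Lambda(n)\mu(n+b_0)$ into essentially $\log x\sum_{p\le x}\mu(p+b_0)$, which is exactly the quantity bounded in Theorem \ref{thm1212MP.200}. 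So the first step is to detach the prime-power contribution from each $\Lambda$ by the usual splitting into a prime part weighted by $\log m$ plus a negligible higher-prime-power remainder, reducing the whole sum to a weighted correlation of Möbius values over $k$-tuples of shifted primes.

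With the additive skeleton fixed, I would process the multiplicative factors exactly as in Subsections \ref{S8111MP} and \ref{S2626LP}: writing $\mu(m)=\lambda(m)\mu^2(m)$ and $\mu^2(m)=\sum_{d^2\mid m}\mu(d)$ on each Möbius factor, as in \eqref{eq8111MP.520} and \eqref{eq2626LP.220A}, peels off an explicit squarefree density and leaves a product of Liouville signs $\lambda(n+b_0)\cdots\lambda(n+b_{l-1})$ attached to the prime-tuple weight. The goal is then to show that this $\lambda$-correlation, summed over $n$ for which $n+a_0,\dots,n+a_{k-1}$ are prime, is $o(x)$. To quantify the constraint ``$n+a_i$ prime'' one inserts the Hardy--Littlewood singular series $\mathfrak{S}(a_0,\dots,a_{k-1})$ as the expected local density and controls the remainder by a Bombieri--Vinogradov mean value over the modulus $q=\lcm(d_0^2,\dots,d_{l-1}^2)$, in the exact spirit of Hypothesis \ref{hyp2225P.560} but now for the mixed weight $\Lambda\cdots\Lambda\,\lambda\cdots\lambda$ over arithmetic progressions. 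The sign-pattern decomposition of Lemma \ref{lem8877LP.300} and equation \eqref{eq8877L.120} then package the required cancellation into the corresponding counting functions.

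The hard part will be supplying the genuine (not merely logarithmically averaged) cancellation in the $\lambda$-correlation over the prime-restricted set. Even without the $\Lambda$-weight, the plain Chowla correlation $\sum_{n\le x}\lambda(n+b_0)\cdots\lambda(n+b_{l-1})=o(x)$ is unconditionally known only when $l$ is odd or on logarithmic average, so for even $l$ the multiplicative input is already open; superimposing the prime restriction coming from the $\Lambda$-factors only deepens the difficulty, because the von Mangoldt product is \emph{not} uniform in the Gowers sense. On the major arcs it correlates with nilsequences, and closing the argument would require showing that the Möbius product is asymptotically orthogonal to precisely those nilsequence obstructions. That orthogonality is the joint prime--Möbius equidistribution the conjecture itself asserts, so every reduction I can make circles back to a statement at least as deep as the target.

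Consequently I would present this not as an unconditional proof but as the conditional route the rest of the paper follows: reduce the correlation to a Bombieri--Vinogradov-type mean-value hypothesis for the mixed weight $\Lambda\cdots\Lambda\,\mu\cdots\mu$ over arithmetic progressions of level $\theta<1/2$, analogous to Hypotheses \ref{hyp2225P.550} and \ref{hyp2225P.560}, and then deduce $o(x)$ by the squarefree expansion and sign-pattern bookkeeping above. The genuinely unconditional cases would be exactly those where the underlying Chowla correlation is already a theorem, namely $l$ odd, mirroring the unconditional Liouville results of Theorems \ref{thm1212LP.200} and \ref{thm1212LP.700}.
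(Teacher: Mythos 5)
The statement you were asked to address is labelled a \emph{Conjecture} in the paper, and the paper supplies no proof of it: Section \ref{S5511} only observes that $k=0$ recovers the Chowla conjecture, $l=0$ recovers the Hardy--Littlewood conjecture, cites \cite{LT2021} for the averaged cases, and treats the single instance $k=l=1$, $a_0=0$, in Theorem \ref{thm5757.500}. Your decision not to claim an unconditional proof is therefore the correct call, and your reduction of that base case by partial summation to $\sum_{p\le x}\mu(p+b_0)$ is precisely the argument of \eqref{eq5757.510}. Your proposed treatment of the general case --- squarefree expansion $\mu=\lambda\mu^2$, $\mu^2(m)=\sum_{d^2\mid m}\mu(d)$, a Bombieri--Vinogradov-type mean value over the moduli $\lcm(d_0^2,\dots,d_{l-1}^2)$, and sign-pattern bookkeeping --- is the natural extension of Subsections \ref{S8111MP} and \ref{S2626LP}, so in spirit you follow the same route the paper would follow.

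Where you go beyond the paper is in the diagnosis of why this route cannot close unconditionally, and that diagnosis is accurate: the prime restriction imposed by the $\Lambda$-factors is a non-uniform weight, the required orthogonality of the M\"obius product to the resulting nilsequence obstructions is equivalent in strength to the conjecture itself, and for even $l$ even the unweighted Chowla input is open. Indeed you are more careful than the source: the paper calls Theorem \ref{thm5757.500} ``unconditional,'' yet its proof invokes Theorem \ref{thm1212MP.200}, which assumes Hypothesis \ref{hyp2225P.550}, so even the $k=l=1$ case is conditional within the paper's own logic. One small imprecision on your side: the odd-order Chowla correlations are currently known only in \emph{logarithmically averaged} form (Tao--Ter\"av\"ainen), not as unaveraged asymptotics, so your closing claim that the cases of odd $l$ would be ``genuinely unconditional'' overstates what the existing multiplicative input delivers once the logarithmic weight is removed.
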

The special case $k=0$ reduces to the Chowla conjecture, and the special case $l=0$ reduces to the Hardy-Littlewood conjecture. The other cases for $(k,l)\ne(0,0)$ has been proved on average for almost all combinations of the admissible $k$-tuple $a_0,a_1,\ldots, a_{k-1}$, and the small subset of integers $b_0<b_1<\cdots<b_{l-1}$, the details of the proof are provided in \cite{LT2021}. \\

The first proof for the case $k=1$ and $l=1$ for any fixed pair of small integers $a_0=0$ and $b_0\ne0$ is proved in Theorem \ref{thm5757.500}, unconditionally.

\subsection{Twisted vonMangoldt Function over the Squarefree Shifted Primes}\label{S5577}
\begin{lem}\label{lem5577.800} Let $x\geq1$ be a large number, and let $a\ne0$ be a fixed integer. Then, 
	\begin{equation}\label{eq5577.800}
		\sum_{n\leq x}\Lambda(n)\mu^{2}(n+a)= s_0x+O\left(\frac{x}{(\log x)^{c}}\right) \nonumber,
	\end{equation}
	where $s_0=0.373955\ldots$, and $c>1$ is a constant.
\end{lem}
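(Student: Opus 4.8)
The plan is to evaluate $\sum_{n\leq x}\Lambda(n)\mu^2(n+a)$ by opening up the squarefree indicator $\mu^2(n+a)=\sum_{d^2\mid n+a}\mu(d)$ and switching the order of summation, exactly as in the proof of Theorem \ref{thm8009MP.350A}. This turns the sum into
\begin{equation}\label{eq5577planA}
	\sum_{n\leq x}\Lambda(n)\mu^2(n+a)=\sum_{d^2\leq x}\mu(d)\sum_{\substack{n\leq x\\ n\equiv -a\bmod d^2}}\Lambda(n),
\end{equation}
after noting that $d^2\mid n+a$ is the congruence condition $n\equiv -a\bmod d^2$. I would then split the range of $d$ at $x_0=(\log x)^{2c}$ into a main part $d^2\leq x_0$ and a tail $x_0<d^2\leq x$, mirroring the partition used throughout Section \ref{S2222MP}.

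For the main part $d^2\leq x_0=(\log x)^{2c}$, the modulus $q=d^2$ satisfies $q\leq(\log x)^{2c}$, which is exactly the range $q\ll(\log x)^B$ where the Siegel–Walfisz theorem for $\Lambda$ applies. Here I must be careful about the gcd condition: for $p\nmid d$ the inner sum is $\psi(x;d^2,-a)$ and by Siegel–Walfisz equals $x/\varphi(d^2)+O(xe^{-c_0\sqrt{\log x}})$ whenever $\gcd(-a,d^2)=1$; when $\gcd(a,d)>1$ there are no primes in the progression (only a bounded prime-power contribution), so those terms are negligible. Summing the main terms gives $x\sum_{d}\mu(d)/\varphi(d^2)$ restricted to $\gcd(d,a)=1$, and extending the sum to all $d\geq1$ introduces an error $O(x/x_0^{1/2})=O(x/(\log x)^{c})$. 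The accumulated Siegel–Walfisz error is $O\!\left(x_0^{1/2}\,xe^{-c_0\sqrt{\log x}}\right)=O(xe^{-c_0\sqrt{\log x}}/2)$, which is absorbed into $O(x/(\log x)^c)$.

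The one genuine subtlety is identifying the leading constant as $s_0$. The sum $\sum_{n\geq1}\mu(n)/\varphi(n^2)=s_0$ is exactly the constant computed in \eqref{eq8009MP.390A}; but here the correct answer must account for the coprimality constraint $\gcd(d,a)=1$ forced by the primality of $n$, so strictly the constant is $\prod_{p\nmid a}(1-1/(p(p-1)))$ rather than the full product $s_0$. I expect this to be the main obstacle, or at least the place where the statement as written is only approximately correct: for $a$ with prime factors the true density differs from $s_0$ by the local factors at $p\mid a$. The cleanest resolution is to observe that $a$ is a fixed bounded integer, so the discrepancy is an $O(1)$-distortion of the constant that one either states explicitly or folds into the claimed form by taking $a$ coprime to the relevant primes; I would flag this and present the singular series in the corrected form.

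For the tail $x_0<d^2\leq x$, I would bound $\Lambda(n)\ll\log x$ trivially and count the integers $n\leq x$ in each progression $n\equiv -a\bmod d^2$, of which there are $\ll x/d^2+1$, giving
\begin{equation}\label{eq5577planB}
	\sum_{x_0<d^2\leq x}\bigg|\sum_{\substack{n\leq x\\ n\equiv -a\bmod d^2}}\Lambda(n)\bigg|\ll (\log x)\,x\sum_{d^2>x_0}\frac{1}{d^2}\ll \frac{x\log x}{x_0^{1/2}}=\frac{x}{(\log x)^{c-1}}.
\end{equation}
Choosing $x_0=(\log x)^{2c'}$ with $c'$ slightly larger than $c$ absorbs the stray $\log x$ and keeps the tail within $O(x/(\log x)^c)$. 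Summing the main part and the tail then completes the verification. The whole argument is unconditional since it rests only on Siegel–Walfisz for $\Lambda$ rather than on any of the Hypotheses; the hard part is purely the bookkeeping of the local Euler factors in the constant.
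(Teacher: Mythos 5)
Your proposal follows essentially the same route as the paper's proof: the identity $\mu^2(n+a)=\sum_{d^2\mid n+a}\mu(d)$, the switch of summation order, the partition at $x_0=(\log x)^{2c}$, Siegel--Walfisz on the small moduli, and the trivial bound $\Lambda(n)\le\log x$ on the tail. The one place you go beyond the paper is the coprimality condition, and you are right to flag it: the paper applies Siegel--Walfisz to $\psi(x;d^2,-a)$ without checking $\gcd(a,d)=1$, so its stated constant $s_0=\prod_{p}\left(1-\tfrac{1}{p(p-1)}\right)$ is correct only for $a=\pm1$; for general fixed $a$ the moduli with $\gcd(d,a)>1$ contribute only a negligible prime-power term of size $O((\log x)^{c+1})$, and the true leading constant is $\prod_{p\nmid a}\left(1-\tfrac{1}{p(p-1)}\right)$, exactly as you observe.
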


\begin{proof}[\textbf{Proof}] Substituting the identity $\mu(n)^2=\sum_{d^2\mid n}\mu(d)$, and switching the order of summation yield
	\begin{eqnarray}   \label{eq5577.810}
		\sum_{n\leq x}\Lambda(n)\mu^{2}(n+a)
		&=&\sum_{n\leq x}\Lambda(n)\sum_{d^2\mid n+a}\mu(d)\\
		&=&\sum_{d^2\leq x}\mu(d)\sum_{\substack{n\leq x\\ d^2\mid n+a}}\Lambda(n)\nonumber\\
		&=&\sum_{d^2\leq x_0}\mu(d)\sum_{\substack{n\leq x\\ d^2\mid n+a}}\Lambda(n)+\sum_{x_0<d^2\leq x}\mu(d)\sum_{\substack{n\leq x\\ d^2\mid n+a}}\Lambda(n)\nonumber,
	\end{eqnarray}
	where $x_0=(\log x)^{2c_1}$, with $c_1>1$. Applying the Siegel-Walfisz theorem, see \cite[p.\ 405]{FI2010}, \cite[Theorem 15.3]{DL2012}, et cetera, to the first subsum in the above partition yields
	\begin{eqnarray}\label{eq5577.820}
		\sum_{d^2 \leq x_0}\mu(d) 
		\sum_{\substack{n\leq x\\d^2\mid n+a}}\Lambda(n)&=&\sum_{d^2 \leq x_0}\mu(d) \left(\frac{x}{\varphi(d^2)}+ O\left(xe^{-c_0\sqrt{\log x}}\right)\right)\\
		&=&s_0x +O\left(xe^{-c_1\sqrt{\log x}}\right) \nonumber,
	\end{eqnarray}
	where $c_0,c_2>0$ are absolute constants, and 
\begin{equation}\label{eq5577.830}
s_0=	\sum_{n \geq 1} \frac{\mu(n)}{\varphi(n^2)}=\prod_{p\geq2}\left(1-\frac{1}{p(p-1)} \right) 0.373955838964330040631201\ldots, 
\end{equation}	
the same constant appears in \eqref{eq8009MP.390A}. An estimate of the second subsum in the partition yields
	\begin{eqnarray}\label{eq5577.840}
		\sum_{x_0< d^2 \leq x}\mu(d) 
		\sum_{\substack{p\leq x\\d^2\mid p+a}}\Lambda(n)&\leq&(\log x)\sum_{x_0< d^2 \leq x,} \sum_{\substack{p\leq x\\d^2\mid p+a}}1\\
		&\ll&x(\log x)\sum_{x_0<d^2 \leq x}\frac{1}{d^2}\nonumber\\
		&\ll&\frac{x}{(\log x)^{c_1-1}}\nonumber.
	\end{eqnarray}
	Summing \eqref{eq5577.810} and \eqref{eq5577.820}, and setting $c=c_1-1>0$ complete the verification.
	
\end{proof}

\subsection{Signed vonMangoldt Function over the Shifted Primes}\label{S5599}
Define the pattern indicator function 
\begin{equation}\label{eq5599.500A}
	\mu^{+}(n)=\mu^2(n)\left( \frac{1+\mu(n)}{2}\right)=
	\begin{cases}
		1&\text{ if } \mu(n)=1,\\
		0&\text{ if } \mu(n)\ne1,\\
	\end{cases}
\end{equation}
and 
\begin{equation}\label{eq5599.500B}
	\mu^{-}(n)=\mu^2(n)\left( \frac{1-\mu(n)}{2}\right)=
	\begin{cases}
		1&\text{ if } \mu(n)=-1,\\
		0&\text{ if } \mu(n)\ne-1.\\
	\end{cases}
\end{equation}
These are the same as the characteristic functions stated in \eqref{eq8833LN.100}.
\begin{lem}\label{lem5599.510} Let $x\geq1$ be a large number, and let $a\ne0$ be a fixed integer. Then, 
	\begin{equation}\label{eq5599.510}
		\sum_{n\leq x}\Lambda(n)\mu^{\pm}(n+a)= \frac{1}{2}s_0x+\frac{1}{2}\sum_{n\leq x}\Lambda(n)\mu(n+a)+O\left(\frac{x}{(\log x)^{c}}\right) \nonumber,
	\end{equation}
	where $s_0>0$ is a constant, and $c>1$ is an absolute constant.
\end{lem}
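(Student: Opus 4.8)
The plan is to reduce the signed twisted sum to the squarefree-weighted sum already evaluated in Lemma \ref{lem5577.800}. Without loss in generality I would treat the $+$ pattern first, inserting the definition \eqref{eq5599.500A} of the single-sign indicator $\mu^{+}$ directly into the sum. This splits the sum linearly:
\begin{equation}
\sum_{n\leq x}\Lambda(n)\mu^{+}(n+a)=\frac{1}{2}\sum_{n\leq x}\Lambda(n)\mu^{2}(n+a)+\frac{1}{2}\sum_{n\leq x}\Lambda(n)\mu^{3}(n+a).
\end{equation}

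Next I would exploit the elementary identity $\mu^{2k+1}(m)=\mu(m)$, valid for every integer $m$ because $\mu$ takes values in $\{-1,0,1\}$; in particular $\mu^{3}(n+a)=\mu(n+a)$, so the second sum equals $\tfrac{1}{2}\sum_{n\leq x}\Lambda(n)\mu(n+a)$, precisely the main term retained in the statement. For the first sum I would invoke Lemma \ref{lem5577.800}, which gives $\sum_{n\leq x}\Lambda(n)\mu^{2}(n+a)=s_0 x+O\!\left(x(\log x)^{-c}\right)$ with $c>1$. Substituting these two evaluations yields the leading term $\tfrac{1}{2}s_0 x$ together with the retained $\mu$-twisted sum, while the error from Lemma \ref{lem5577.800} is absorbed, unchanged in shape, into $O\!\left(x(\log x)^{-c}\right)$. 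This establishes the formula for $\mu^{+}$; the case of $\mu^{-}$ is identical, starting from \eqref{eq5599.500B}, the only difference being the sign attached to the $\mu(n+a)$-term.

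I do not anticipate a genuine obstacle in this argument: all of the analytic difficulty is packaged inside Lemma \ref{lem5577.800} (which itself rests on Siegel--Walfisz applied to the squarefree sieve identity $\mu^{2}=\sum_{d^{2}\mid\,\cdot}\mu(d)$), and what remains is the linear expansion of the indicator and the cube-reduction $\mu^{3}=\mu$. The only point requiring a moment's care is bookkeeping of the constant $c>1$, which is preserved automatically since both error contributions carry the same $x(\log x)^{-c}$ envelope.
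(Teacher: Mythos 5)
Your proposal is correct and takes essentially the same route as the paper's own proof: expand the indicator $\mu^{\pm}(n+a)=\mu^{2}(n+a)\left(1\pm\mu(n+a)\right)/2$, reduce $\mu^{3}(n+a)$ to $\mu(n+a)$, and evaluate $\sum_{n\leq x}\Lambda(n)\mu^{2}(n+a)$ by Lemma \ref{lem5577.800}. Nothing further is needed.
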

\begin{proof}[\textbf{Proof}] Without loss in generality, consider the pattern $\mu^{+}(n+a)=+1$. Now, use the indicator function \eqref{eq5599.500A} to express the single-sign pattern counting function as 
	\begin{eqnarray}   \label{eq5599P.520}
	2\sum_{n\leq x}\Lambda(n)\mu^{+}(n+a)
		&=&\sum_{n\leq x}\Lambda(n)\mu^2(n+a)\left( 1+\mu(n+a)\right)\\
		&=&\sum_{n\leq x}\Lambda(n)\left( \mu^2(n+a)+\mu^3(n+a)\right)\nonumber\\
		&=&\sum_{n\leq x}\Lambda(n) \mu^2(n+a)+\sum_{n\leq x}\Lambda(n) \mu(n+a) \nonumber\\
		&\geq&0\nonumber.
	\end{eqnarray}
	By Lemma \ref{lem5577.800}, the first finite sums has asymptotic evaluation
	\begin{equation}\label{eq5599P.530}
		\sum_{n\leq x}\Lambda(n) \mu^2(n+a)=s_0x+O\left(xe^{-c_0\sqrt{\log x}}\right)
	\end{equation}	
	where $s_0>0$ is a constant, and $c_0>0$ is an absolute constant. Summing these evaluations or estimates verifies the claim for $\sum_{n\leq x}\Lambda(n)\mu^{+}(n+a)\geq0$. The verifications for the other single-sign pattern counting function $\sum_{n\leq x}\Lambda(n)\mu^{-}(n+a)\geq0$ is similar.
\end{proof}

\subsection{Twisted vonMangoldt Function over the Shifted Primes}\label{S5757}
The first case of the hybrid finite, confer Conjecture  \eqref{conj5511.200}, involving the vonMangoldt function and the Mobius function is verified here.
\begin{thm}\label{thm5757.500} Let $x\geq1$ be a large number, and let $a\ne0$ be a fixed integer. Then, 
	\begin{equation}\label{eq5757.500}
		\sum_{n\leq x}\Lambda(n)\mu(n+a)=O\left(\frac{x}{(\log x)^{c}}\right) ,
	\end{equation}
where $c>1$ is an arbitrary constant.
\end{thm}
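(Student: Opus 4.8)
The plan is to extract cancellation from the multiplicativity of $\mu$ rather than from $\Lambda$, by expanding the von Mangoldt weight into a Dirichlet convolution and transferring the problem to sums of $\mu$ along arithmetic progressions, where Theorem \ref{thmMF222.090} supplies the savings. One could try to mimic Lemma \ref{lem5577.800} by writing $\mu(n+a)=\lambda(n+a)\mu^{2}(n+a)=\lambda(n+a)\sum_{d^{2}\mid n+a}\mu(d)$, but after interchanging summations the inner sum still carries the factor $\lambda(n+a)$, so that route merely reshuffles the $\Lambda$–$\mu$ correlation into a $\Lambda$–$\lambda$ correlation of the same depth. Instead I would start from Vaughan's identity, which writes $\Lambda(n)$ for $n>U$ as a ``type~I'' part built from terms $\mu(d)\,\log(n/d)\,\mathbf 1_{d\mid n}$ with $d\le U$ and a ``type~II'' bilinear part $\sum_{de=n,\,d>U,\,e>V}\alpha_{d}\beta_{e}$ with bounded coefficients. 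Substituting this into $\sum_{n\le x}\Lambda(n)\mu(n+a)$ and interchanging summations turns the type~I contribution into combinations of inner sums of the shape $\sum_{m\le x/d}\mu(dm+a)$.

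For each fixed $d$ the inner sum runs over the progression $k\equiv a \pmod d$, $k\le x$, so (once the contribution of $k$ not coprime to $d$, which is sparse, is separated off) it reduces to $\sum_{k\le x,\,k\equiv a\,(d)}\mu(k)+O(1)$. Provided $d\ll(\log x)^{B}$ this is exactly the situation covered by Theorem \ref{thmMF222.090}, giving each inner sum the bound $O\!\left(x/(\log x)^{C}\right)$ with $C=C(B)$ as large as desired. Summing against the bounded type~I coefficients and choosing $U=(\log x)^{B}$ then yields the target estimate $O(x/(\log x)^{c})$ for the whole type~I part. Note that divisors with $d>U$ never enter the type~I sum, which is precisely why one cannot use the cruder identity $\Lambda=\mu*\log$ directly: its large-$d$ tail carries no cancellation and its trivial bound overwhelms the main term.

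The hard part will be the type~II bilinear sum $\sum_{d\sim D}\sum_{e\sim E}\alpha_{d}\beta_{e}\,\mu(de+a)$, in which $\mu$ is evaluated along the shifted values of a bilinear form. Unlike the type~I piece, this cannot be collapsed onto a single progression; obtaining cancellation here is the genuine content of the theorem. The natural attack is Cauchy--Schwarz, bounding $\sum_{e}\bigl|\sum_{d}\alpha_{d}\,\mu(de+a)\bigr|$ through the expanded second moment $\sum_{d_{1},d_{2}}\sum_{e}\mu(d_{1}e+a)\mu(d_{2}e+a)$, and then controlling these correlations of $\mu$ over the two-parameter family $\{de+a\}$ by a large-sieve or dispersion argument. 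Securing a power-of-$\log$ saving in exactly this bilinear estimate is the step on which the whole proof rests; it is the $\Lambda$-weighted analogue of the shifted-average input packaged by Hypothesis \ref{hyp2225P.550}, and it is the obstacle that any honest unconditional argument must overcome.
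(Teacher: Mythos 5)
There is a genuine gap, and you have named it yourself: your argument terminates at the type~II bilinear sums $\sum_{d}\sum_{e}\alpha_d\beta_e\,\mu(de+a)$ without estimating them, and you concede that obtaining a power-of-$\log$ saving there is ``the obstacle that any honest unconditional argument must overcome.'' That obstacle is not overcome in your proposal, and indeed an unconditional treatment of those bilinear forms is essentially equivalent to the open problem of bounding $\sum_{p\le x}\mu(p+a)$ itself; the Cauchy--Schwarz expansion you sketch leads to correlations $\sum_e \mu(d_1e+a)\mu(d_2e+a)$ of Chowla type, for which no unconditional power-of-$\log$ saving is known. So as written the proposal proves only the type~I contribution and leaves the main difficulty untouched.

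You have also missed the much shorter route the paper actually takes, which sidesteps Vaughan's identity entirely. The quantity $N(t)=\sum_{p\le t}\mu(p+a)$ has already been bounded by $O\left(t(\log t)^{-c_0}\right)$ in Theorem \ref{thm1212MP.200} (equivalently Theorem \ref{thm8111MP.500}), conditionally on Hypothesis \ref{hyp2225P.550}. Since $\sum_{n\le x}\Lambda(n)\mu(n+a)=\sum_{p\le x}(\log p)\,\mu(p+a)+O\!\left(x^{1/2}(\log x)^2\right)$ (the prime powers $p^k$, $k\ge2$, are negligible), a single partial summation
\begin{equation}
\sum_{p\le x}(\log p)\,\mu(p+a)=N(x)\log x-\int_1^x\frac{N(t)}{t}\,dt=O\!\left(\frac{x}{(\log x)^{c_0-1}}\right)\nonumber
\end{equation}
finishes the proof. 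Note two consequences: first, the identity $\mu=\lambda\mu^2$ that you dismiss as ``merely reshuffling'' is in fact how the paper establishes the prerequisite bound on $N(t)$ (the reshuffle becomes effective once Hypothesis \ref{hyp2225P.550} is invoked on the inner sums over progressions $p\equiv -a\bmod d^2$); second, the theorem is therefore conditional on that hypothesis even though its statement omits the assumption, whereas your proposal is implicitly aiming at an unconditional proof, which is a strictly harder and currently inaccessible target.
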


\begin{proof}[\textbf{Proof}] Rewrite it in terms of the Mobius function over the shifted primes $N(x)=\sum_{p\leq x}\mu(p+a)=O\left(x(\log x) ^{-c_0}\right) $ computed in Theorem \ref{thm1212MP.200}, as
	\begin{eqnarray}   \label{eq5757.510}
		\sum_{n\leq x}\Lambda(n)\mu(n+a)
		&=&\sum_{p\leq x}(\log p)\mu(p+a)\\
		&=&\int_1^x (\log t)d N(t)\nonumber\\
		&=&(\log x)O\left(\frac{x}{(\log x)^{c_0}}\right)-\int_1^x \frac{N(t)}{t}dt\nonumber\\
		&=&O\left(\frac{x}{(\log x)^{c_0-1}}\right)\nonumber,
	\end{eqnarray}
	where $c=c_0-1>1$ is an arbitrary constant.
	
\end{proof}

For an interesting application of Theorem \ref{thm5757.500}, set $a=2k$. Then, a result in \cite[Theorem 1.1]{MV2017} proves that \eqref{eq5757.500} and 
\begin{equation}\label{eq5599.520}
	\sum_{n\leq x}\Lambda(n)\Lambda(n+2k)=\mathfrak{G}(2k)x+o(x) 
\end{equation}  
are equivalent.\\

Consider the single-sign pattern natural density defined by
\begin{equation}\label{eq5599.530}
	\delta_{0}^{\pm}(a)=  \lim_{x\to \infty}\;	\frac{\#\{n\leq x:\Lambda(n)\mu(n+a)=\pm1\log p\}}{x}.
\end{equation}

\begin{thm}\label{thm5757.900} Let $x\geq1$ be a large number, and let $a\ne0$ be a fixed integer. Then, the values $ n\in [1,x]$ such that
	\begin{equation}\label{eq5757.900}
 \Lambda(n)\mu(n+a)=\log p\nonumber
	\end{equation}
are equidistributed on the interval $[1,x]$ as $x\to \infty$. A similar result is valid for $ \Lambda(n)\mu(n+a)=-\log p$. 
\end{thm}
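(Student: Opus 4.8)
The plan is to read the statement through the von Mangoldt weighted counting function of the set $S^{+}=\{n\ge 1:\Lambda(n)>0,\ \mu(n+a)=1\}$, since $\Lambda(n)\mu^{+}(n+a)$ is exactly the $\log p$--weighted indicator of $S^{+}$; ``equidistributed on $[1,x]$'' is then read, via the interval form of Weyl's criterion and the $\Lambda$--weighting used in \eqref{eq5599.530}, as the requirement that the proportion of this weight lying in any subinterval $[\alpha x,\beta x]$ tends to $\beta-\alpha$. First I would pin down the global asymptotic. Set $F(t)=\sum_{n\le t}\Lambda(n)\mu^{+}(n+a)$. By Lemma \ref{lem5599.510}, applied with upper limit $t$, one has $F(t)=\tfrac12 s_0 t+\tfrac12\sum_{n\le t}\Lambda(n)\mu(n+a)+O\!\left(t/(\log t)^{c}\right)$, and by Theorem \ref{thm5757.500} the middle sum is $O\!\left(t/(\log t)^{c}\right)$. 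Combining the two yields the clean formula
\[
F(t)=\tfrac12 s_0\, t+O\!\left(\frac{t}{(\log t)^{c}}\right),
\]
valid for all large $t$, with $c>1$ arbitrary.

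Next I would localize to a subinterval. Fix $0\le\alpha<\beta\le 1$. Applying the displayed asymptotic at the endpoints $\beta x$ and $\alpha x$ and subtracting (using $\log(\alpha x)\asymp\log(\beta x)\asymp\log x$ for fixed $\alpha,\beta$) gives
\[
\sum_{\alpha x<n\le\beta x}\Lambda(n)\mu^{+}(n+a)=\tfrac12 s_0(\beta-\alpha)\,x+O\!\left(\frac{x}{(\log x)^{c}}\right).
\]
Dividing by $F(x)=\tfrac12 s_0 x+O(x/(\log x)^{c})$ shows that the fraction of the total $S^{+}$--weight falling in $[\alpha x,\beta x]$ converges to $\beta-\alpha$. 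Since $\alpha,\beta$ were arbitrary, the scaled points of $S^{+}$ satisfy the interval criterion and are therefore equidistributed with respect to the von Mangoldt weighting as $x\to\infty$. The contribution of genuine prime powers $p^{k}$ with $k\ge 2$ is $O(\sqrt{x}\,\log x)$ and may be discarded throughout, so the conclusion is in substance a statement about the primes $p\le x$ with $\mu(p+a)=1$.

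The $-\log p$ pattern is handled identically, replacing $\mu^{+}$ by $\mu^{-}$ in Lemma \ref{lem5599.510}; the cancellation furnished by Theorem \ref{thm5757.500} again removes the $\sum\Lambda(n)\mu(n+a)$ term and leaves the same main term $\tfrac12 s_0 t$. The main obstacle is not any new cancellation but the uniformity of the error in the upper endpoint $t\in[\alpha x,\beta x]$: I must confirm that the Siegel--Walfisz input behind Lemma \ref{lem5577.800}, the partial--summation step in Theorem \ref{thm5757.500}, and the algebraic identity of Lemma \ref{lem5599.510} each hold with error $O(t/(\log t)^{c})$ for a general large argument $t$, not only at $t=x$. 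This is the case because those estimates are proved for a generic large parameter and because $\log t\asymp\log x$ on the fixed-proportion window; once this uniformity is recorded, the equidistribution follows formally from the two displays above.
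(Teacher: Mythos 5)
Your proposal is correct and is built from exactly the two ingredients the paper itself uses, namely Lemma \ref{lem5599.510} for the signed sum $\sum_{n\le t}\Lambda(n)\mu^{\pm}(n+a)$ and Theorem \ref{thm5757.500} to remove the term $\sum_{n\le t}\Lambda(n)\mu(n+a)$, but it goes a genuine step beyond the paper's own argument. The paper's proof consists solely of the single computation $\lim_{x\to\infty}\sum_{n\le x}\Lambda(n)\mu^{\pm}(n+a)/x=\tfrac12 s_0$ at the one endpoint $t=x$ and then simply declares equidistribution; strictly speaking that establishes only that the weighted set has a positive natural density, which by itself does not say how the elements are distributed inside $[1,x]$. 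Your localization step, applying $F(t)=\tfrac12 s_0 t+O\left(t/(\log t)^{c}\right)$ at both endpoints of $[\alpha x,\beta x]$, subtracting, and dividing by $F(x)$ to verify the interval criterion for every $0\le\alpha<\beta\le1$, is precisely the missing bridge from ``positive density'' to ``equidistributed,'' and your checks on uniformity of the error in the argument $t$ and on discarding the prime-power contribution $O(\sqrt{x}\log x)$ are the right ones (the needed inputs, Lemma \ref{lem5577.800} and Theorem \ref{thm5757.500}, are indeed proved for a generic large parameter). One bookkeeping caution you inherit from the paper: the density in \eqref{eq5599.530} is written as an unweighted count of integers $n$ divided by $x$, which would literally be $0$ since there are only $O(x/\log x)$ prime powers up to $x$; both your argument and the paper's actually compute the $\Lambda$-weighted count, and if one insists on the unweighted version a partial summation against $1/\log n$ is required, changing the normalization from $x$ to $\li(x)$ but nothing else in the structure of the proof.
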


\begin{proof} The single-sign pattern counting function stated in Lemma \ref{lem5599.510} leads to an analytic expression for the natural density
\begin{eqnarray}\label{eq5599.540}
		\delta_{0}^{\pm}(a)&=&  \lim_{x\to \infty}\;	\frac{\#\{n\leq x:\Lambda(n)\mu(n+a)=\pm1\log p\}}{x}\\
		&=& \lim_{x\to \infty}\;	\frac{s_0x+\sum_{n\leq x}\Lambda(n)\mu(n+a)+O\left(x(\log x)^{-c}\right)}{2x}\nonumber\\
		&=&\frac{1}{2}s_0\nonumber.
	\end{eqnarray} 
Here, the last line follows from Theorem \ref{thm5757.500}, and $s_0=0.3739\ldots.$
\end{proof}


\section{Results for the Mobius Autocorrelation Function over the Integers}\label{S7979M}
\subsection{Introduction} \label{S7979MN}
The current estimate of the logarithmic average order of the autocorrelation of the Mobius function $\mu$ has the asymptotic formula
\begin{equation} \label{eq7979MN.100}
	\sum_{n \leq x} \frac{\mu(n) \mu(n+t)}{n} =O \left (\frac{\log x}{\sqrt{\log \log x} } \right ),
\end{equation} 
where $t \ne0$ is a fixed parameter, in \cite[Corollary 1.5]{HR2021} and \cite[Corollary 2]{HH2022}. This improves the estimate $O((\log x)(\log \log \log x)^{-c})$, where $c>0$ is a constant, described in \cite[p. 5]{TT2015}. Here, the following result is considered.   

\begin{thm} \label{thm7979MN.200} Let $\mu:\mathbb{N} \longrightarrow \{-1,0,1\}$ be the Mobius function. Then, for any sufficiently large number $x>1$, and a fixed integer $t \ne0$,
	\begin{equation} \label{eq7979MN.200}
		\sum_{n \leq x} \mu(n) \mu(n+t) =O\left(\frac{x}{(\log \log x)^{1/2-\varepsilon}} \right), 
	\end{equation}	
	where $\varepsilon>0$ is arbitrarily small. 
\end{thm}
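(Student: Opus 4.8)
The plan is to take the logarithmically averaged bound \eqref{eq7979MN.100} as the sole analytic input and to pass from it to the natural average \eqref{eq7979MN.200} by Abel summation. Write $a_n=\mu(n)\mu(n+t)$, set $L(y)=\sum_{n\le y}\frac{a_n}{n}$ and $S(y)=\sum_{n\le y}a_n$, so that \eqref{eq7979MN.100} reads $L(y)=O\!\left(\frac{\log y}{\sqrt{\log\log y}}\right)$ while the target \eqref{eq7979MN.200} is a bound for $S(x)$. Partial summation gives the exact identity
\[
S(x)=x\,L(x)-\int_{1}^{x}L(y)\,dy,
\]
which is the natural starting point, since it expresses the sought natural average entirely in terms of the quantity controlled by \eqref{eq7979MN.100}. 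First I would try to insert the bound on $L$ term by term.

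This immediately reveals the obstruction. Both pieces on the right are individually of size $x\log x/\sqrt{\log\log x}$, because $\int_1^x \frac{\log y}{\sqrt{\log\log y}}\,dy \asymp \frac{x\log x}{\sqrt{\log\log x}}$; that is a full factor $\log x$ larger than the claimed bound for $S(x)$. Hence \eqref{eq7979MN.200} can only emerge from cancellation between the boundary term $x\,L(x)$ and the integral $\int_1^x L(y)\,dy$. To capture that cancellation one needs far more than the $O$-bound \eqref{eq7979MN.100}: one would need either an honest asymptotic for $L$ with controlled variation, or uniform control of the dyadic logarithmic sums $\sum_{y<n\le 2y}\frac{a_n}{n}=L(2y)-L(y)$ across all scales $y\le x$, together with a summation over those scales.

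The main obstacle is that no such upgrade follows from \eqref{eq7979MN.100} by soft analysis alone, because the implication ``logarithmic average small $\Rightarrow$ natural average small'' is false for bounded sequences. Indeed, a sequence equal to $+1$ on $[2^{2k},2^{2k+1})$ and $-1$ on $[2^{2k+1},2^{2k+2})$ has bounded, hence $o(\log x)$, logarithmic sums, while its natural average $S(x)/x$ oscillates with amplitude of order $1$; so the passage from \eqref{eq7979MN.100} to \eqref{eq7979MN.200} must use the arithmetic of $\mu(n)\mu(n+t)$ and not merely the size of $L$. Concretely, the content being claimed is a natural-density bound for the two-point M\"obius correlation, which is the (still open) non-averaged Chowla-type problem, whereas only its logarithmic form \eqref{eq7979MN.100} is presently known. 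Closing the gap therefore requires genuinely new input: the most plausible route I see is a Matom\"{a}ki--Radziwi\l\l-type short-interval analysis adapted to the correlation $\mu(n)\mu(n+t)$ itself, rather than to a single multiplicative function, producing dyadic natural-average bounds that are summable to the stated rate $(\log\log x)^{-1/2+\varepsilon}$. I expect this short-interval step, and the $\varepsilon$-power of $\log\log x$ lost in it, to be essentially the whole difficulty; the Abel-summation bookkeeping above is routine by comparison.
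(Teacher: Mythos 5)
You do not prove the statement, and your diagnosis of why it cannot be proved from the stated input is essentially correct; what you should know is that the paper's own proof commits precisely the fallacy you identify. The paper derives Theorem \ref{thm7979MN.200} from the sign-pattern decomposition $R(t,x)=R^{++}-R^{+-}+R^{--}-R^{-+}$ together with Theorem \ref{thm5225MN.500}, but the error term $O\left(x(\log\log x)^{-1/2+\varepsilon}\right)$ attributed there to each $R^{\pm\pm}(t,x)$ is supplied by Lemma \ref{lem7766MN.200}, which asserts exactly the implication you reject: that the logarithmic bound \eqref{eq7979MN.100} forces $\sum_{n\le x}\mu(n)\mu(n+t)\ll x(\log\log x)^{-1/2+\varepsilon}$. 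The proof of that lemma argues by contradiction, replacing the negation of ``$B(x)\ll x(\log\log x)^{-1/2+\varepsilon}$'' by the far stronger hypothesis that $B(z)\gg z(\log\log z)^{-1/2+\varepsilon}$ with a fixed sign at every scale $z\le x$, and only then does the integral $\int_1^x B(z)z^{-2}\,dz$ become large. Your alternating dyadic-block sequence refutes the implication as stated: it is bounded, has bounded (hence admissible) logarithmic sums, yet $|S(x)|\asymp x$ infinitely often; since Lemma \ref{lem7766MN.200} uses nothing about $\mu(n)\mu(n+t)$ beyond boundedness, it cannot be valid. The remaining bookkeeping is circular anyway: Lemma \ref{lem8844MN.300} gives $R^{\pm\pm}(t,x)=\tfrac14 s_1x\pm\tfrac14R(t,x)+O(\cdot)$, so substituting into the four-term alternating sum returns the tautology $R=R+O(\cdot)$ unless $R$ has already been bounded independently.

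Where your proposal stops short is that it offers only a programme (a Matom\"aki--Radziwi\l{}\l-type short-interval analysis of the correlation itself) rather than an argument, so it does not establish \eqref{eq7979MN.200} either; as you correctly note, a natural-density bound of this strength for the two-point M\"obius correlation is the open, non-averaged form of the Chowla problem, and no presently known unconditional input yields it. The honest conclusion is that the theorem is not proved by the paper, and your Abel-summation analysis, together with the counterexample, is the precise reason why.
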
	

Section \ref{S8009MN} to Section \ref{S8844MN} cover the basic standard materials in analytic number theory, and new materials. The proof of Theorem \ref{thm7979MN.200} is assembled in Section \ref{S3322MN}.

\subsection{Nonlinear Autocorrelation Functions Results}\label{S8009MN}
The number of squarefree integers have the following asymptotic formulas.
\begin{lem} \label{lem9339MN.107} Let $\mu: \mathbb{Z} \longrightarrow \{-1,0,1\}$ be the Mobius function. Then, for any sufficiently large number $x\geq1$, 
	\begin{equation} 
		\sum_{n \leq x} \mu^2(n) =\frac{6}{\pi^2}x+O \left (x^{1/2} \right ). \nonumber
	\end{equation} 
\end{lem}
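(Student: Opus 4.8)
The plan is to use the standard convolution identity $\mu^2(n)=\sum_{d^2\mid n}\mu(d)$, which the paper has already employed repeatedly (for instance in Theorem \ref{thm8009MP.350A}), and then evaluate the resulting sum by switching the order of summation. First I would write
\begin{equation*}
\sum_{n\leq x}\mu^2(n)=\sum_{n\leq x}\sum_{d^2\mid n}\mu(d)=\sum_{d\leq \sqrt{x}}\mu(d)\left\lfloor \frac{x}{d^2}\right\rfloor,
\end{equation*}
where the inner count arises because, for fixed $d$, the multiples of $d^2$ up to $x$ number exactly $\lfloor x/d^2\rfloor$.

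Next I would replace the floor by its main term using $\lfloor x/d^2\rfloor = x/d^2 + O(1)$, which produces
\begin{equation*}
\sum_{d\leq \sqrt{x}}\mu(d)\left(\frac{x}{d^2}+O(1)\right)=x\sum_{d\leq \sqrt{x}}\frac{\mu(d)}{d^2}+O(\sqrt{x}),
\end{equation*}
since the error contributes at most $O(\sqrt{x})$ terms each bounded by $1$. The remaining task is to complete the finite Dirichlet series to the full series $\sum_{d\geq 1}\mu(d)/d^2=1/\zeta(2)=6/\pi^2$, and to control the tail $\sum_{d>\sqrt{x}}\mu(d)/d^2$. The tail is bounded trivially by $\sum_{d>\sqrt{x}}1/d^2 = O(1/\sqrt{x})$, so multiplying by $x$ yields a contribution of $O(\sqrt{x})$.

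Assembling these pieces gives
\begin{equation*}
\sum_{n\leq x}\mu^2(n)=\frac{6}{\pi^2}x+O(\sqrt{x})+O(\sqrt{x})=\frac{6}{\pi^2}x+O\left(x^{1/2}\right),
\end{equation*}
which is the claim. There is no genuine obstacle in this argument; it is a textbook computation, and the only point demanding mild care is matching the two $O(\sqrt{x})$ error contributions (one from the floor approximation, one from the tail of the series) so that neither dominates the stated $O(x^{1/2})$ bound. An elementary estimate such as $\sum_{d>\sqrt{x}}d^{-2}\leq \int_{\sqrt{x}}^\infty t^{-2}\,dt = x^{-1/2}$ suffices to make the tail bound rigorous.
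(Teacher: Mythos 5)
Your proof is correct and is precisely the "elementary routine" the paper's one-line proof alludes to: the identity $\mu^2(n)=\sum_{d^2\mid n}\mu(d)$, interchange of summation, the floor approximation, and completion of the series to $1/\zeta(2)=6/\pi^2$ with an $O(x^{-1/2})$ tail. No differences to report.
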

\begin{proof} Use identity $\mu(n)^2=\sum_{d^2\mid n}\mu(d)$, and other elementary routines, or confer to the literature.\end{proof}

The constant coincides with the density of squarefree integers. Its approximate numerical value is
\begin{equation}\label{eq9339MN.72}
	\frac{6}{\pi^2}=\prod_{p\geq 2}\left ( 1-\frac{1}{p^2}\right )=0.607988295164627617135754\ldots,
\end{equation}
where $p\geq2$ ranges over the primes. The remainder term
\begin{equation} 
	E(x)=\sum_{n \leq x} \mu^2(n) -\frac{6}{\pi^2}x
\end{equation} 
is a topic of current research, its optimum value is expected to satisfies the upper bound $E(x)=O(x^{1/4+\varepsilon})$ for any small number $\varepsilon>0$. Currently, $E(x)=O\left (x^{1/2}e^{-\sqrt{\log x}}\right )$ is the best unconditional remainder term.\\

Assuming $t\ne0$, the earliest result for the autocorrelation of the squarefree indicator function $\mu^2(n)$ appears to be
\begin{equation}\label{eq8009MN.100}
	\sum_{n \leq x}\mu^2(n) \mu^2(n+t)=cx+O\left (x^{2/3} \right ),
\end{equation}
where $c>0$ is a constant, this is proved in \cite{ML1947}.  Except for minor adjustments, the generalization to the $k$-tuple autocorrelation function has nearly the same structure.

\begin{thm}\label{thm8009MN.200}  Let $ q\ne0$, $a_0,a_1, \ldots,a_{k-1}$ be small integers, such that $0\leq a_0<a_1<\cdots<a_{k-1}$. Let $x\geq 1$ be a large number, and let $\mu: \mathbb{Z} \longrightarrow \{-1,0,1\}$ be the Mobius function. Then, 
	\begin{equation}\label{eq8009MN.200}
		\sum_{n \leq x}\mu^2(n+a_0)\mu^2(n+a_1)\cdots \mu^2(n+a_{k-1})=\prod_{p\geq 2}\left ( 1-\frac{\varpi(p)}{p^2}\right )x+O\left (x^{2/3+\varepsilon} \right ),\nonumber
	\end{equation}
	where 
	\begin{equation}\label{eq8009MN.210}
		\varpi(p)=\#\{m\leq p^2: qm+a_i\equiv 0 \bmod p^2 \text{ for } i=0,1,2, ..., k-1\}.
	\end{equation}
	The small number $\varepsilon>0$ and the implied constant depends on $q\ne0$.
\end{thm}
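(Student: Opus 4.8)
The plan is to generalize the classical squarefree-autocorrelation argument, which rests on the identity $\mu^2(n)=\sum_{d^2\mid n}\mu(d)$ together with the Chinese Remainder Theorem. Expanding each factor $\mu^2(n+a_i)=\sum_{d_i^2\mid n+a_i}\mu(d_i)$ and interchanging the order of summation turns the left-hand side into
\begin{equation}\label{eq8009MN.300}
\sum_{d_0,d_1,\ldots,d_{k-1}}\mu(d_0)\mu(d_1)\cdots\mu(d_{k-1})\sum_{\substack{n\leq x\\ d_i^2\mid n+a_i\ \forall i}}1.
\end{equation}
The inner count is governed by the simultaneous congruences $n\equiv -a_i\pmod{d_i^2}$; by CRT this system is either empty or determines $n$ modulo $\lcm(d_0^2,\ldots,d_{k-1}^2)$, so the inner sum is $x/\lcm(d_0^2,\ldots,d_{k-1}^2)+O(1)$ whenever the congruences are compatible. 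First I would record the multiplicative structure of the resulting arithmetic density and identify the local factor at each prime $p$ with $1-\varpi(p)/p^2$, where $\varpi(p)$ counts residues $m\bmod p^2$ killed by at least one congruence $qm+a_i\equiv 0$, matching the definition in \eqref{eq8009MN.210}.

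Next I would split the multiple sum at a truncation level, keeping the moduli $d_i$ below $x^{\delta}$ for a suitable small $\delta$ and bounding the tail. For the main term I would extend each $d_i$ to infinity, incurring an acceptable error, and factor the resulting multiplicative sum over primes into the Euler product $\prod_{p\geq2}(1-\varpi(p)/p^2)$ times $x$. The error from replacing the inner count by its main term $x/\lcm(\cdots)$ produces an $O(1)$ per tuple, and summing over all tuples with $d_i\leq x^{\delta}$ contributes $O(x^{k\delta})$; choosing $\delta$ so that $k\delta\leq 2/3$ keeps this within the stated bound.

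The tail where some $d_i$ exceeds the truncation threshold is handled by the trivial bound on the inner sum, $\ll x/d_i^2$ (or $\ll 1+x/\lcm$), summed against $\sum_{d_i>x^{\delta}}1/d_i^2\ll x^{-\delta}$, which gives a power saving; here the small $\varepsilon$ in the error $O(x^{2/3+\varepsilon})$ absorbs the divisor-type logarithmic losses incurred from the $k-1$ other unrestricted variables. The main obstacle I anticipate is bookkeeping the compatibility of the congruence system: when the $d_i$ share common prime factors the system $n\equiv -a_i\pmod{d_i^2}$ may be inconsistent (if $d_i^2$ and $d_j^2$ both force incompatible residues), and the nonzero contributions are exactly those for which the local conditions cohere. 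Getting the local factor $1-\varpi(p)/p^2$ right — in particular verifying that $\varpi(p)$ correctly encodes, for each prime $p$, the number of forbidden residue classes arising from the overlap of the $k$ shifted conditions after the dilation by $q$ — is the delicate combinatorial heart of the argument, and the rest is the routine Euler-product factorization and tail estimation sketched above.
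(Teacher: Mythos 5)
The paper offers no proof of this theorem at all: its ``proof'' is the single sentence ``Consult \cite{ML1947}, \cite[Theorem 1.2]{MI2017}, and the literature.'' Your sketch is therefore the only argument on the table, and in outline it is the classical Mirsky route: expand each factor as $\mu^2(n+a_i)=\sum_{d_i^2\mid n+a_i}\mu(d_i)$, interchange summation, resolve the congruence system by CRT, truncate, and identify the density with the Euler product $\prod_{p}\left(1-\varpi(p)/p^2\right)$. The structural points you flag are handled correctly: compatibility forces $\gcd(d_i,d_j)^2\mid a_j-a_i$, so the singular series converges absolutely and factors locally into $1-\varpi(p)/p^2$; and you rightly read ``for $i=0,\ldots,k-1$'' in the definition of $\varpi(p)$ as ``for at least one $i$'' (as literally written the density would be wrong). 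You also implicitly correct the statement's internal inconsistency, namely that $\varpi(p)$ is defined with the dilation $qm+a_i$ and the constants are said to depend on $q$, while the displayed sum is over $n+a_i$ with no $q$ in sight.

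There is, however, a genuine quantitative gap in your error accounting for $k\geq 3$. Your main-term error is one $O(1)$ per retained tuple, hence $O(x^{k\delta})$, and your tail bound is $O(x^{1-\delta+\varepsilon})$ (the saving $\sum_{d_j>x^{\delta}}x/d_j^2\ll x^{1-\delta}$ from the one large variable, times the $x^{\varepsilon}$ divisor bound for the remaining $k-1$ variables). Making both $\leq x^{2/3+\varepsilon}$ requires simultaneously $\delta\leq 2/(3k)$ and $\delta\geq 1/3$, which is possible only for $k=2$; the balanced choice $\delta=1/(k+1)$ yields $O\left(x^{k/(k+1)+\varepsilon}\right)$, strictly worse than the claimed $O\left(x^{2/3+\varepsilon}\right)$ for every $k\geq 3$. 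Since there genuinely are $\gg x^{k\delta}$ compatible tuples (for instance all pairwise coprime ones), the per-tuple $O(1)$ cannot be improved within your decomposition. To recover $x^{2/3+\varepsilon}$ uniformly in $k$ one must replace the $k$ independent expansions by a single inclusion--exclusion over one squarefree modulus $d$, each prime $p\mid d$ being charged to whichever shift $n+a_i$ its square divides, so that the congruence condition becomes a union of $\varpi(d)=\prod_{p\mid d}\varpi(p)$ residue classes modulo $d^2$ and the error per modulus is $O(\varpi(d))$ rather than $O(1)$ per $k$-tuple; that refinement is the actual content of Mirsky's argument and is the missing idea here. For $k=2$ (the only case the rest of the paper uses) your sketch does deliver the stated bound.
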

\begin{proof}Consult \cite{ML1947}, \cite[Theorem 1.2]{MI2017}, and the literature.\end{proof}

\begin{exa}\label{exa8009MN.250}{\normalfont
		For the parameters $q=2$, $a_0=0$, and $a_1=1$, the number of solutions of the system of equations is $\varpi(p)=\#\{m\leq p^2: qm+a_i\equiv 0 \bmod p^2=2$ for any prime $p\geq 2$, so constant $s_1=s_1(t)$ has the numerical value, (using $p\leq 10^5$),
		\begin{equation}\label{eq8009MN.220}
			s_1=\prod_{p\geq 2}\left ( 1-\frac{\varpi(p)}{p^2}\right )=\prod_{p\geq 2}\left ( 1-\frac{2}{p^2}\right )=0.32263461660543396347\ldots.
		\end{equation}
	}
\end{exa}

\begin{lem}\label{lem8009MN.350}  Let $x\geq 1$ be a large number, and let $\mu: \mathbb{Z} \longrightarrow \{-1,0,1\}$ be the Mobius function. If $t\ne0$ is a fixed integer,  then, 
	\begin{equation}\label{eq8009MN.350}
		\sum_{n \leq x}\mu(n)^2 \mu(n+t)=O\left (\frac{x}{(\log x)^c} \right ),\nonumber
	\end{equation}
	where $c> 0$ is an arbitrary constant.
\end{lem}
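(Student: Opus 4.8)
The plan is to mirror the structure of the proof of Theorem \ref{thm8008MP.700}, but to replace the conditional Hypothesis \ref{hyp2225P.550} by the \emph{unconditional} Bombieri--Vinogradov theorem for the Mobius function, Corollary \ref{cor2225P.550}, which is available here because the sum runs over all integers $n\leq x$ rather than over the shifted primes. First I would insert the convolution identity $\mu^2(n)=\sum_{d^2\mid n}\mu(d)$ and interchange the order of summation, obtaining
\begin{equation}
\sum_{n\leq x}\mu^2(n)\mu(n+t)=\sum_{d^2\leq x}\mu(d)\sum_{\substack{n\leq x\\ d^2\mid n}}\mu(n+t).\nonumber
\end{equation}
Writing $m=n+t$, the inner sum becomes a sum of $\mu$ over the arithmetic progression $m\equiv t \bmod d^2$ with $t<m\leq x+t$, up to boundedly many boundary terms per modulus (which contribute $\ll\sqrt{x}$ in total and are negligible).

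Next I would partition the outer sum at $x_0=x^{1/2}/(\log x)^{B}$, with $B=B(c)$ chosen below, splitting into the ranges $d^2\leq x_0$ and $x_0<d^2\leq x$. For the main range, bounding in absolute value and comparing with Corollary \ref{cor2225P.550}, the moduli $q=d^2$ form a subset of $\{q\leq x^{1/2}/(\log x)^{B}\}$ and each term is nonnegative, so
\begin{equation}
\Bigg|\sum_{d^2\leq x_0}\mu(d)\sum_{\substack{m\equiv t \bmod d^2\\ m\leq x+t}}\mu(m)\Bigg|
\leq \sum_{q\leq x^{1/2}/(\log x)^{B}}\max_{a \bmod q}\max_{z\leq x+t}\Bigg|\sum_{\substack{m\leq z\\ m\equiv a \bmod q}}\mu(m)\Bigg|\ll \frac{x}{(\log x)^{C}},\nonumber
\end{equation}
with $C=C(B)$ as large as desired. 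For the tail range I would estimate the inner sum trivially by the count of integers in the progression, $\sum_{m\leq x+t,\,m\equiv t \bmod d^2}1\ll x/d^2+1$, so that
\begin{equation}
\sum_{x_0<d^2\leq x}\Bigg|\sum_{\substack{m\leq x+t\\ m\equiv t \bmod d^2}}\mu(m)\Bigg|\ll x\sum_{d>\sqrt{x_0}}\frac{1}{d^2}+\sqrt{x}\ll \frac{x}{\sqrt{x_0}}+\sqrt{x}\ll x^{3/4}(\log x)^{B/2},\nonumber
\end{equation}
which is $O(x/(\log x)^c)$ for every fixed $c$. Summing the two ranges and taking $C$ large then gives the claim.

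The main obstacle is not the decomposition but the treatment of the inner progression $m\equiv t\bmod d^2$ when $\gcd(t,d^2)>1$: there the residue class $t$ is not coprime to the modulus, so the bare Siegel--Walfisz statement Theorem \ref{thmMF222.090} does not apply directly, and one would otherwise have to factor out $\gcd(t,d)$ and descend through the non-squarefree residues on which $\mu$ vanishes. Routing the argument through Corollary \ref{cor2225P.550} sidesteps this entirely, since the maximum $\max_{a\bmod q}$ there ranges over \emph{all} residue classes, coprime or not. The only remaining point of care is the splitting level $x_0=x^{1/2}/(\log x)^{B}$: it must lie at the Bombieri--Vinogradov level of distribution so that Corollary \ref{cor2225P.550} governs the head, while being large enough that the trivially estimated tail is $O(x^{3/4+\varepsilon})$ and hence negligible against the target error $x/(\log x)^c$.
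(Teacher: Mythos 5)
Your proposal is correct and follows essentially the same route as the paper's proof: insert the identity $\mu^2(n)=\sum_{d^2\mid n}\mu(d)$, interchange the sums, split the moduli at a level within the Bombieri--Vinogradov range, control the head via Corollary \ref{cor2225P.550}, and bound the tail trivially by $\sum_{d^2>x_0}(x/d^2+1)$. The only differences are cosmetic --- you cut at $x^{1/2}/(\log x)^{B}$ where the paper cuts at $x^{2\varepsilon}$ with $\varepsilon\in(0,1/4)$, and you make explicit the change of variable $m=n+t$ and the non-coprime residue classes, points the paper passes over silently.
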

\begin{proof} Substitute the identity $\mu(n)^2=\sum_{d^2\mid n}\mu(d)$, and switching the order of summation yield
	\begin{eqnarray}\label{eq8009MN.360B}
		\sum_{n \leq x}\mu(n)^2 \mu(n+t)&=&\sum_{n \leq x} \mu(n+t)\sum_{d^2\mid n}\mu(d)\\
		&=&\sum_{d^2 \leq x}\mu(d) \sum_{\substack{n\leq x\\d^2\mid n}}\mu(n+t)\nonumber\\
		&=&\sum_{d^2 \leq x^{2\varepsilon}}\mu(d) \sum_{\substack{n\leq x\\d^2\mid n}}\mu(n+t)+\sum_{x^{2\varepsilon}< d^2 \leq x}\mu(d) \sum_{\substack{n\leq x\\d^2\mid n}}\mu(n+t)\nonumber,
	\end{eqnarray}
	where $\varepsilon\in (0,1/4)$. Applying Corollary \ref{cor2225P.550} to the first subsum in the partition yields
	\begin{eqnarray}\label{eq8009MN.370B}
		\sum_{d^2 \leq x^{2\varepsilon}}\mu(d) \sum_{\substack{n\leq x\\d^2\mid n}}\mu(n+t)&\leq&\sum_{q \leq x^{\varepsilon}} \bigg |\mu(d) \sum_{\substack{n\leq x\\m\equiv b \bmod q}}\mu(m)\bigg |\\
		&=&O\left( \frac{x}{(\log x)^{c}}\right) \nonumber,
	\end{eqnarray}
	where $q=d^2$. An estimate of the second subsum in the partition yields
	\begin{eqnarray}\label{eq8009MN.380B}
		\sum_{x^{2\varepsilon}< d^2 \leq x}\mu(d) \sum_{\substack{n\leq x\\d^2\mid n}}\mu(n+t)&\leq&\sum_{x^{2\varepsilon}< d^2 \leq x} \sum_{\substack{n\leq x\\d^2\mid n}}1\\
		&\ll&x\sum_{d^2 \leq x}\frac{1}{d^2}\nonumber\\
		&\ll&x^{1-\varepsilon}\nonumber.
	\end{eqnarray}
	Summing \eqref{eq8009MN.370B} and \eqref{eq8009MN.380B} completes the verification.
\end{proof}

\subsection{Logarithm Average and Arithmetic Average Connection} \label{S7766MN}
Let $f: \N \longrightarrow \C$ be an arithmetic function. The connection between the logarithm average 
\begin{equation} \label{eq7766MN.050}
	\sum_{n \leq x} \frac{f(n) }{n} =M_L(x)+E_L(x)
\end{equation}
and the arithmetic average 
\begin{equation} \label{eq7766MN.060}
	\sum_{n \leq x} f(n) =M_A(x)+E_A(x),
\end{equation}
where $M_i(x)$ and $E_i(x)$ are the main terms and error terms respectively, is important in partial summations. The error term $E_L(x)$ required to compute an effective arithmetic average \eqref{eq7766MN.060} directly from the logarithm average \eqref{eq7766MN.050} is explained in \cite[Section 2.12]{HA2013}, see also \cite[Exercise 2.12]{HA2013}.

\begin{lem}\label{lem7766MN.100} Let $t\ne0$ be a small integer, and let $x\geq1$ be a large number. An effective nontrivial arithmetic average $\sum_{n \leq x} \mu(n) \mu(n+t)=o(x)$ can be computed directly from the logarithm average $A(x)=\sum_{n \leq x} \mu(n) \mu(n+t)n^{-1}$ if and only if $A(x)=O(1/f(x))$, where $f(x)$ is a monotonically increasing function. 
\end{lem}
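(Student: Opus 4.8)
The plan is to derive an exact identity relating the two averages via Abel (partial) summation, and then read off both implications from that single identity. Write $g(n)=\mu(n)\mu(n+t)$, let $S(x)=\sum_{n\leq x}g(n)$ denote the arithmetic average, and keep the notation $A(x)=\sum_{n\leq x}g(n)/n$ for the logarithmic average. Taking $a_n=g(n)/n$, so that $A(x)=\sum_{n\leq x}a_n$, and applying partial summation against the weight $\phi(t)=t$ (so $\phi'(t)=1$) yields the fundamental relation
\begin{equation}
S(x)=\sum_{n\leq x}a_n\,\phi(n)=xA(x)-\int_1^x A(t)\,dt.
\end{equation}
This single identity expresses the arithmetic average entirely in terms of the logarithmic average and its integral, and the whole lemma reduces to estimating its right-hand side.

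For the sufficiency direction, I would assume $A(x)=O(1/f(x))$ with $f$ monotonically increasing and unbounded, so that $1/f(x)\to 0$. The leading term is then controlled by $xA(x)=O(x/f(x))=o(x)$. For the integral term, since $f$ is increasing the function $1/f(t)$ is non-increasing and tends to $0$; a standard Ces\`aro-mean argument (if $h(t)\to 0$ then $x^{-1}\int_1^x h(t)\,dt\to 0$) gives $\int_1^x A(t)\,dt=o(x)$. Substituting both estimates into the identity yields $S(x)=o(x)$, which is precisely the effective nontrivial arithmetic average claimed.

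For the necessity direction, I would argue that if the bound $S(x)=o(x)$ is to be extracted from $A(x)$ through the identity above, the dominant contribution $xA(x)$ must itself be $o(x)$, which forces $A(x)=o(1)$. Given $A(x)\to 0$, set $M(x)=\sup_{t\geq x}|A(t)|$; then $M$ is non-increasing with $M(x)\to 0$, so $f(x)=1/M(x)$ is monotonically increasing and satisfies $|A(x)|\leq M(x)=1/f(x)$, that is, $A(x)=O(1/f(x))$. (If $A$ vanishes identically beyond some point, any increasing $f$ works.) This recovers the stated form of the hypothesis and closes the equivalence.

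The main obstacle is the necessity direction, where the phrase \emph{computed directly} must be pinned down: the content is that the partial-summation identity is the only mechanism transferring information from $A$ to $S$, and that its leading term $xA(x)$ cannot in general be cancelled by the integral. Making this rigorous amounts to showing that no decay slower than $A(x)=o(1)$ can yield $S(x)=o(x)$ through this identity, and then packaging the decay rate of $A$ into the monotone envelope $f(x)=1/\sup_{t\geq x}|A(t)|$.
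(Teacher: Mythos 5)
Your proof rests on the same Abel-summation identity $S(x)=xA(x)-\int_1^x A(t)\,dt$ that the paper uses, and in fact supplies considerably more detail than the paper's own very terse argument (which asserts both implications with essentially no justification, and even writes $A(x)=O(x/f(x))$ where $O(1/f(x))$ is meant), including the Ces\`aro step for the integral term and the monotone-envelope construction $f(x)=1/\sup_{t\geq x}|A(t)|$. Two caveats, neither fatal: the sufficiency direction requires $f$ to be unbounded, as you implicitly assume but the lemma does not state (a bounded increasing $f$ only gives $A(x)=O(1)$); and the necessity direction cannot be made fully rigorous because ``can be computed directly from'' is not a precise mathematical hypothesis --- a defect of the statement itself, which you correctly identify, rather than of your argument.
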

\begin{proof} Assume it is nontrivial. By partial summation, 
	\begin{eqnarray}\label{eq7766MN.110}
		o(x)=\sum_{n \leq x} \mu(n) \mu(n+t)&=&\sum_{n \leq x} n \cdot \frac{\mu(n) \mu(n+t)}{n} 	\\
		&=&\int_1^x z \,dA(z)\nonumber \\
		&=&xA(x)-\int_1^x A(z)dz\nonumber.
	\end{eqnarray}
	This implies that $A(x)=O(x/f(x))$. Conversely. If $A(x)=O(x/f( x))$, then 
	\begin{equation} \label{eq7766MN.120}
		\sum_{n \leq x} \mu(n) \mu(n+t) =o(x)
	\end{equation}
	as claimed.
\end{proof}
The known logarithm average \eqref{eq7979MN.100} is not of the form $A(x)=O(1/f(x))$, but it can be used to compute an \textit{absolute} upper bound. 

\begin{lem}\label{lem7766MN.200} Let $t\ne0$ be a small integer, and let $x\geq1$ be a large number. If the logarithm average $A(x)=\sum_{n \leq x} \mu(n) \mu(n+t)n^{-1}=O(\log x)(\log\log x)^{-1/2}$, then the arithmetic average $\sum_{n \leq x} \mu(n) \mu(n+t)\ll x(\log \log x)^{-1/2+\varepsilon}$, where $\varepsilon>0$ is a small number. 
\end{lem}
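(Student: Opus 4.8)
The plan is to convert the known logarithmic-average bound into an arithmetic-average bound by partial summation, splitting the range of the logarithmic sum into a ``tail'' where the hypothesized bound is strong and a ``head'' that contributes negligibly. Write $A(x)=\sum_{n\leq x}\mu(n)\mu(n+t)n^{-1}$, and let $S(x)=\sum_{n\leq x}\mu(n)\mu(n+t)$ be the arithmetic average we wish to bound. The natural first step is to recover $S$ from $A$ via Abel summation in the form $S(x)=xA(x)-\int_1^x A(u)\,du$, using the hypothesis $A(u)\ll (\log u)(\log\log u)^{-1/2}$. The difficulty is that this crude substitution gives only $S(x)\ll x(\log x)(\log\log x)^{-1/2}$, which is worse than trivial, so the main term $xA(x)$ must be handled more carefully rather than bounded directly.

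The key idea I would use is a dyadic or parameterized splitting argument exploiting the fact that the logarithmic average has cancellation: the bound $A(x)\ll(\log x)(\log\log x)^{-1/2}$ is genuinely small compared to $\sum_{n\leq x}n^{-1}\asymp\log x$, reflecting near-total cancellation in the coefficients $\mu(n)\mu(n+t)$. First I would fix a threshold $y=y(x)$ with $1\le y\le x$ and write
\begin{equation}\label{eq7766MN.210}
S(x)=\sum_{n\leq y}\mu(n)\mu(n+t)+\sum_{y<n\leq x}n\cdot\frac{\mu(n)\mu(n+t)}{n}.
\end{equation}
The first sum is trivially $O(y)$. For the second sum I would apply partial summation against the weight $n$, using the tail logarithmic sums $A(x)-A(u)$ for $u\in(y,x]$, each of which inherits the bound $O((\log x)(\log\log x)^{-1/2})$. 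This yields a contribution of size $O\big(x(\log x)(\log\log x)^{-1/2}\big)$ from the endpoint term and a comparable integral term, so the savings must come from optimizing $y$ together with a sharper treatment of the oscillation.

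The main obstacle, and the crux of the argument, is converting the logarithmic-scale cancellation into the claimed power of $\log\log x$: naive partial summation loses a full factor of $\log x$, so to reach $x(\log\log x)^{-1/2+\varepsilon}$ one needs the cancellation in $A$ to persist on every dyadic subinterval $(2^k,2^{k+1}]$ rather than only globally. I would therefore split $(y,x]$ into $O(\log x)$ dyadic blocks, bound the arithmetic sum over each block by $2^{k}\cdot\big(A(2^{k+1})-A(2^k)\big)$ in absolute value, and sum the hypothesized bounds $O((\log x)(\log\log x)^{-1/2})$ across the $O(\log x)$ blocks; the extra $\varepsilon$ in the exponent absorbs the logarithmic losses incurred in this block-by-block estimate. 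Balancing the trivial contribution $O(y)$ against the block estimate by choosing $y$ appropriately then delivers the stated bound $\sum_{n\leq x}\mu(n)\mu(n+t)\ll x(\log\log x)^{-1/2+\varepsilon}$. The delicate point will be verifying that the dyadic losses really are confined to the $\varepsilon$ in the exponent and do not degrade the main $(\log\log x)^{-1/2}$ savings.
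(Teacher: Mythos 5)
Your dyadic decomposition does not deliver the claimed saving, and the gap is quantitative, not merely notational. Partial summation on a block gives
\begin{equation}
\bigg|\sum_{2^k<n\leq 2^{k+1}}\mu(n)\mu(n+t)\bigg|\ll 2^{k}\sup_{2^k<u\leq 2^{k+1}}\big|A(u)-A(2^k)\big|,
\end{equation}
and the only control the hypothesis gives on $A(u)-A(2^k)$ is the triangle inequality $|A(u)|+|A(2^k)|\ll(\log x)(\log\log x)^{-1/2}$. For the top blocks, $k$ near $\log_2 x$, this is far \emph{weaker} than the trivial bound $|A(u)-A(2^k)|\leq\sum_{2^k<n\leq 2^{k+1}}n^{-1}=O(1)$: the hypothesis is a global statement and says nothing about how the mass of $A$ is distributed among dyadic ranges. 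All of $\sum_{n\leq x}\mu(n)\mu(n+t)$ could, as far as the hypothesis is concerned, sit in $(x/2,x]$, where it costs only $O(1)$ in the logarithmic average but $\asymp x$ in the arithmetic one. Consequently each top block can only be bounded by $O(2^k)$ and the block sum returns the trivial estimate $O(x)$; if instead you insert the hypothesized bound into every block you get $\sum_k 2^k(\log x)(\log\log x)^{-1/2}\asymp x(\log x)(\log\log x)^{-1/2}$, which is worse than trivial. The factor $\log x$ you propose to ``absorb into the $\varepsilon$'' is a genuine multiplicative loss of size $\log x$, and $(\log\log x)^{\varepsilon}=o(\log x)$ cannot absorb it; no choice of the threshold $y$ repairs this, since the obstruction lives in the top block for every $y<x/2$. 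A direct argument of your shape would require \emph{per-block} cancellation, i.e. $A(2u)-A(u)=o(1)$ uniformly, which is strictly more information than the stated hypothesis.

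The paper runs the deduction in the opposite direction precisely to avoid this loss: it supposes for contradiction that $B(z)=\sum_{n\leq z}\mu(n)\mu(n+t)\gg z(\log\log z)^{-1/2+\varepsilon}$ and substitutes this lower bound into $A(x)=B(x)/x+\int_1^xB(z)z^{-2}\,dz$, where the weight $z^{-2}$ accumulates a factor $\log x$ in the favourable direction and forces $A(x)\gg(\log x)(\log\log x)^{-1/2+\varepsilon}$, contradicting the hypothesis. (That argument has its own delicate points --- the negation of $\ll$ is not a one-sided $\gg$ holding for all large $z$, and possible sign changes of $B$ are not addressed --- but it is at least structured so that the $\log x$ works for the bound rather than against it.) Your proposal, as written, cannot reach the stated conclusion.
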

\begin{proof} Assume $B(x)=\sum_{n \leq x} \mu(n) \mu(n+t)\gg x(\log \log x)^{-1/2+\varepsilon}$. Then,
	\begin{eqnarray}\label{eq7766MN.210}
		\frac{\log x}{(\log \log x)^{1/2}}&\gg&\sum_{n \leq x} \frac{\mu(n) \mu(n+t)}{n}\\
		&=&\int_1^x \frac{1}{z} \,dB(z)\nonumber \\
		&=&\frac{B(z)}{z}+\int_1^x \frac{B(z)}{z^2}dz\nonumber	.
	\end{eqnarray}
	Since the integral \begin{equation}\label{eq7766MN.220}
		\int_1^x \frac{B(z)}{z^2}dz=	\int_2^x\frac{1}{z(\log \log z)^{1/2-\varepsilon}} dz\gg \frac{\log x}{(\log \log x)^{1/2-\varepsilon}},
	\end{equation}
	for sufficiently large $x\geq1$, the assumption is false. Hence, it implies that there is an absolute upper bound $B(x)\ll x(\log \log x)^{-1/2+\varepsilon}$.
\end{proof}

\subsection{Single-Sign Patterns Mobius Characteristic Functions} \label{S8822MN}
The analysis of single-sign pattern characteristic function
is well known. 

\begin{lem}\label{lem8822MN.200A} If $\mu(n)\in \{-1,1\}$ is the Mobius function, then,
	\begin{eqnarray}\label{eq8822MN.200A}
		\mu^{\pm}(n)&=&\mu^2(n)\left( \frac{1\pm\mu(n)}{2}\right)\\	&=&
		\begin{cases}
			1 &\text{ if } \mu(n)=\pm1,\\
			0 &\text{ if } \mu(n)\ne\pm1,\nonumber\\
		\end{cases}
	\end{eqnarray}
	are the characteristic functions of the subset of primes
	\begin{equation}\label{eq8822MN.210A}
		\mathcal{N}_{\mu}^{\pm }	=	\{n\geq 1: \mu(n)=\pm1\}.
	\end{equation} 
\end{lem}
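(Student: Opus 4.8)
The plan is to verify the claim by a direct case analysis over the three possible values of the Mobius function, since $\mu(n)\in\{-1,0,1\}$ for every $n\geq1$. The structural point to keep in mind is that the squarefree indicator $\mu^2(n)$ placed in front of the linear factor $(1\pm\mu(n))/2$ is exactly what annihilates the non-squarefree case $\mu(n)=0$; without it the expression would take the value $1/2$ there and fail to be $\{0,1\}$-valued.

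First I would treat the $+$ pattern $\mu^{+}(n)=\mu^2(n)\left((1+\mu(n))/2\right)$. If $\mu(n)=1$, then $\mu^2(n)=1$ and $(1+\mu(n))/2=1$, so $\mu^{+}(n)=1$. If $\mu(n)=-1$, then $(1+\mu(n))/2=0$ and hence $\mu^{+}(n)=0$. If $\mu(n)=0$, then $\mu^2(n)=0$ and again $\mu^{+}(n)=0$. Thus $\mu^{+}(n)=1$ precisely when $\mu(n)=1$ and $\mu^{+}(n)=0$ otherwise, which is by definition the characteristic function of $\mathcal{N}_{\mu}^{+}=\{n\geq1:\mu(n)=1\}$.

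Second, the $-$ pattern is handled symmetrically: the substitution $\mu(n)\mapsto-\mu(n)$ interchanges the roles of the values $+1$ and $-1$ in the linear factor, so the identical three-case check yields $\mu^{-}(n)=1$ exactly when $\mu(n)=-1$ and $\mu^{-}(n)=0$ otherwise, identifying $\mu^{-}(n)$ with the characteristic function of $\mathcal{N}_{\mu}^{-}$. There is no genuine obstacle in this argument; it is a finite verification on the value set $\{-1,0,1\}$, and the only subtlety worth recording is the role of the $\mu^2(n)$ prefactor in removing the non-squarefree integers.
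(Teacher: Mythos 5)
Your case analysis is correct and is the natural (indeed the only) argument here; the paper itself states this lemma without any proof, treating it as evident, so your explicit three-case verification over $\mu(n)\in\{-1,0,1\}$ simply supplies the routine check the paper omits. Your remark that the prefactor $\mu^2(n)$ is what kills the non-squarefree case $\mu(n)=0$ is exactly the one point worth recording, and you have it.
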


\subsection{Double-Sign Patterns Characteristic Functions} \label{S8833MN}
The principle of single-sign pattern characteristic function is extended to the double-sign patterns $(\mu(n+a),\mu(n+b))=(\pm1,\pm1)$, where $a,b\in \Z$ is a pair of small integers such that $a\ne b$. Other sign patterns are topics of current research, confer \cite{HA1986}, \cite{KP1986}, \cite[Corollary 1.7]{TT2015}, \cite{MT2015}, and similar literature, for details. A new and different approach to the analysis of double-sign patterns, triple-sign patterns, et cetera, based on elementary methods, is provided here. 

\begin{lem}\label{lem8833MN.200} Let $t\ne0$ be an integer, and let $\mu(n)\in \{-1,0,1\}$ be the Mobius function. Then,
	\begin{eqnarray}\label{eq8833MN.200}
		\mu^{\pm \pm}(n,t)&=&\mu^2(n)\mu^2(n+t)\left( \frac{1\pm\mu(n)}{2}\right)\left( \frac{1\pm\mu(n+t)}{2}\right)\\	&=&
		\begin{cases}
			1 &\text{ if } \mu(n)=\pm1,\mu(n+t)=\pm1,\\
			0 &\text{ if } \mu(n)\ne\pm1,\mu(n+t)\ne\pm1,\nonumber\\
		\end{cases}
	\end{eqnarray}
	are the characteristic functions of the subset of integers
	\begin{equation}\label{eq8833MN.210}
		\mathcal{N}_{\mu}^{\pm \pm}(t)	=	\{n\geq 1: \mu(n)=\pm1, \mu(n+t)=\pm1\}.
	\end{equation} 
\end{lem}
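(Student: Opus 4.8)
The plan is to reduce the double-sign identity to the single-sign characteristic function already recorded in Lemma \ref{lem8822MN.200A}, and then invoke the elementary fact that the product of two $\{0,1\}$-valued functions is the characteristic function of the intersection of their supports.

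First I would observe that the displayed product factors as
\[
\mu^{\pm\pm}(n,t)=\left[\mu^2(n)\left(\frac{1\pm\mu(n)}{2}\right)\right]\left[\mu^2(n+t)\left(\frac{1\pm\mu(n+t)}{2}\right)\right]=\mu^{\pm}(n)\,\mu^{\pm}(n+t),
\]
where $\mu^{\pm}$ is precisely the single-sign characteristic function of Lemma \ref{lem8822MN.200A}. By that lemma the first bracket equals $1$ when $\mu(n)=\pm1$ and $0$ otherwise, and the second bracket likewise detects the sign of $\mu(n+t)$. Since both factors take only the values $0$ and $1$, their product is $1$ exactly when both sign conditions hold and is $0$ otherwise; this is the characteristic function of $\mathcal{N}_{\mu}^{\pm\pm}(t)$.

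For completeness I would also spell out the verification underlying each factor: the term $\mu^2(n)$ forces the value $0$ whenever $n$ fails to be squarefree (so $\mu(n)=0$), while on squarefree $n$ one has $\mu(n)\in\{-1,1\}$ and $\tfrac{1\pm\mu(n)}{2}\in\{0,1\}$ selects the intended sign. Carrying out this case check on each factor reproduces the compact four-case table encoded by the $\pm$ notation.

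There is no analytic content to overcome here; the statement is a bookkeeping identity. The only point requiring care is that the $\pm$ symbols in the two factors are chosen independently, so the lemma in fact asserts the four identities for the patterns $++,+-,-+,--$ simultaneously. Establishing the factorization and quoting the single-sign lemma dispatches all four at once.
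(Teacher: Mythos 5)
Your proof is correct. The paper itself offers no proof of this lemma---it is stated as a bookkeeping observation with the verification omitted---so there is no authorial argument to compare against; your factorization into the two single-sign indicators of Lemma \ref{lem8822MN.200A}, followed by the remark that a product of $\{0,1\}$-valued functions is the characteristic function of the intersection of their supports, is exactly the verification the paper leaves implicit, and it correctly dispatches all four sign patterns at once. One small point your case analysis surfaces: the lemma's zero case is printed as ``$0$ if $\mu(n)\ne\pm1,\ \mu(n+t)\ne\pm1$,'' but as your argument shows the function vanishes as soon as \emph{either} sign condition fails, not only when both do; the displayed condition should be read disjunctively, i.e.\ as the complement of the set $\mathcal{N}_{\mu}^{\pm\pm}(t)$.
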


\subsection{Trivial Double-Sign Patterns Counting Functions} \label{S8811MN}
Let $\epsilon_i\in \{-1,0,1\}$, and $(\mu(n),\mu(n+t)=(\epsilon_0,\epsilon_1)$ denote an arbitrary double-sign pattern. The trivial double-sign patterns 
\begin{equation}\label{8811MN.100}
	(-1,0),\quad (0,-1),\quad(0,0),\quad(0,1),\quad(1,0),
\end{equation}
do not contribute to the autocorrelation function. Nevertheless, it is sometimes required to quantify 
the natural densities of these double-sign patterns.
\begin{lem} \label{lem8811MN.200} For a large number $x\geq1$, the zero double-sign pattern $(\epsilon_0,\epsilon_1)=(0,0)$ counting function is given by
\begin{equation}\label{eq8811MN.200}
	R^{00}(t,x)=\sum_{\substack{n\leq x\\ \mu(n)\ne0,\; \mu(n+t)\ne0}}1=s_2x+O\left(x^{2/3} \right)\nonumber, 
\end{equation}
where $s_2=1-2\zeta(2)^{-1}+s_1>0$ is a constant. Further, the natural density of the double zero pattern $(0,0)$ is
\begin{equation}\label{eq8811MN.210}
\delta^{00}(t)=1-2\zeta(2)^{-1}+s_1\nonumber. 
\end{equation}
\end{lem}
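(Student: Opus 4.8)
The plan is to realize the double-zero pattern $(\mu(n),\mu(n+t))=(0,0)$ through the squarefree indicator $\mu^2$ and then apply inclusion--exclusion. Since $\mu(m)=0$ precisely when $m$ is not squarefree, the indicator of this pattern is $(1-\mu^2(n))(1-\mu^2(n+t))$, and expanding the product gives
\begin{equation}
R^{00}(t,x)=\sum_{n\leq x}\left(1-\mu^2(n)-\mu^2(n+t)+\mu^2(n)\mu^2(n+t)\right).
\end{equation}
First I would split this into the four summatory functions $\sum_{n\leq x}1$, $\sum_{n\leq x}\mu^2(n)$, $\sum_{n\leq x}\mu^2(n+t)$, and $\sum_{n\leq x}\mu^2(n)\mu^2(n+t)$, each of which is already understood from the preceding material.

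Next I would estimate the four pieces. The trivial sum is $\sum_{n\leq x}1=x+O(1)$. By Lemma \ref{lem9339MN.107}, $\sum_{n\leq x}\mu^2(n)=\zeta(2)^{-1}x+O(x^{1/2})$; the shifted sum satisfies $\sum_{n\leq x}\mu^2(n+t)=\sum_{t<m\leq x+t}\mu^2(m)$, which differs from $\sum_{m\leq x}\mu^2(m)$ only by $O(t)=O(1)$ boundary terms and hence is again $\zeta(2)^{-1}x+O(x^{1/2})$. Finally, the autocorrelation $\sum_{n\leq x}\mu^2(n)\mu^2(n+t)=s_1 x+O(x^{2/3})$ by \eqref{eq8009MN.100}, the case $k=2$ with shifts $0$ and $t$ of Theorem \ref{thm8009MN.200}, where $s_1=s_1(t)$ is the associated singular product. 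Combining the four estimates yields
\begin{equation}
R^{00}(t,x)=\left(1-2\zeta(2)^{-1}+s_1\right)x+O(x^{2/3}),
\end{equation}
so $s_2=1-2\zeta(2)^{-1}+s_1$ and the error term is dominated by the $O(x^{2/3})$ contributed by the correlation sum.

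The natural density then follows immediately by dividing by $x$ and letting $x\to\infty$, giving $\delta^{00}(t)=\lim_{x\to\infty}R^{00}(t,x)/x=1-2\zeta(2)^{-1}+s_1$. Positivity of $s_2$ is clear from its probabilistic meaning: $s_2$ is the density of integers $n$ with both $n$ and $n+t$ non-squarefree, and the Chinese Remainder Theorem produces such $n$ (e.g.\ by prescribing $4\mid n$ and $9\mid n+t$), so the counted set has positive density. The only genuinely substantive input is the correlation asymptotic with its $O(x^{2/3})$ error, which is already packaged in Theorem \ref{thm8009MN.200}; the rest is bookkeeping. The one point to watch is that the shift $t$ must be tracked both in the boundary of the single squarefree sum and in the local factors defining $s_1(t)$: for primes with $p^2\mid t$ the count $\varpi(p)$ drops from $2$ to $1$, so $s_1$ genuinely depends on $t$, and this dependence should be carried through the final constant $s_2=s_2(t)$.
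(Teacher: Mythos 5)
Your proposal is correct and follows essentially the same route as the paper's own proof: expand the indicator $(1-\mu^2(n))(1-\mu^2(n+t))$ by inclusion--exclusion and evaluate the four resulting sums via Lemma \ref{lem9339MN.107} and Theorem \ref{thm8009MN.200}. Your added remarks on the $O(1)$ boundary shift, the positivity of $s_2$ via the Chinese Remainder Theorem, and the $t$-dependence of $s_1$ are sensible refinements (and implicitly correct the typo in the displayed summation condition, which should read $\mu(n)=0,\ \mu(n+t)=0$), but they do not change the argument.
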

\begin{proof}The counting function for the zero double-sign pattern $(\epsilon_0,\epsilon_1)=(0,0)$ is
	\begin{eqnarray}   \label{eq8811MN.220}
	R^{00}(t,x)
	&=&\sum_{n\leq x}\left( 1-\mu^2(n)\right) \left( 1-\mu^2(n+t)\right)\\
	&=&\sum_{n\leq x}\left( 1-\mu^2(n)-\mu^2(n+t)+\mu^2(n)\mu^2(n+t)\right)\nonumber\\
	&=&\sum_{n\leq x}1-\sum_{n\leq x}\mu^2(n)-\sum_{n\leq x}\mu^2(n+t)+\sum_{n\leq x}\mu^2(n)\mu^2(n+t) \nonumber\\
	&\geq&0\nonumber.
\end{eqnarray}
The last four finite sums have the following evaluations or estimates.
\begin{enumerate}
\item $ \displaystyle \sum_{n\leq x}1=[x], $\tabto{8cm}
\item $ \displaystyle \sum_{n\leq x}\mu^2(n)=\zeta(2)^{-1}x+O\left(x^{1/2} \right), $\tabto{8cm}
see Lemma \ref{lem9339MN.107},
\item $ \displaystyle \sum_{n\leq x}\mu^2(n+t)=\zeta(2)^{-1}x+O\left(x^{1/2} \right), $\tabto{8cm}
see Lemma \ref{lem9339MN.107},
\item $ \displaystyle \sum_{n\leq x}\mu^2(n)\mu^2(n+t)=s_1x+O\left(x^{2/3} \right), $\tabto{8cm}
see Theorem \ref{thm8009MN.200},
\end{enumerate}
where $[x]$ is the largest integer function, and $s_1=s_1(t)>0$ is a constant. Summing these evaluations or estimates verifies the claim for $R^{00}(t,x)\geq0$. 
\end{proof}

\begin{lem} \label{lem8811MN.300} For a large number $x\geq1$, the zero double-sign pattern $(\epsilon_0,\epsilon_1)=(0,0)$ counting function is given by
	\begin{equation}\label{eq8811MN.300}
		R^{01}(t,x)=\sum_{\substack{n\leq x\\ \mu(n)=0,\; \mu(n+t)=1}}1=s_2x+O\left(x^{2/3} \right), 
	\end{equation}
	where $s_2=(\zeta(2)^{-1}-s_1)/2>0$, and $c>0$ is an absolute constant. Further, the natural density of the double zero pattern $(0,1)$ \begin{equation}\label{eq8811MN.310}
\delta^{01}(t)=(\zeta(2)^{-1}-s_1)/2\nonumber. 
	\end{equation}
\end{lem}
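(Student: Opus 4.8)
The plan is to follow the template of the proof of Lemma \ref{lem8811MN.200}, writing $R^{01}(t,x)$ as an arithmetic average of a product of indicator functions and then expanding. The condition $\mu(n)=0$ is detected by the factor $1-\mu^2(n)$, which equals $1$ exactly when $n$ fails to be squarefree, while the condition $\mu(n+t)=1$ is detected by the single-sign pattern characteristic function $\mu^{+}(n+t)=\mu^2(n+t)\left(\frac{1+\mu(n+t)}{2}\right)$ of Lemma \ref{lem8822MN.200A}. Accordingly, I would begin from
\begin{equation}
	R^{01}(t,x)=\sum_{n\leq x}\left(1-\mu^2(n)\right)\mu^2(n+t)\left(\frac{1+\mu(n+t)}{2}\right).
\end{equation}

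Next I would use $\mu^3(n+t)=\mu(n+t)$ to simplify the inner product, expand, and split the result into four arithmetic averages,
\begin{equation}
	2R^{01}(t,x)=\sum_{n\leq x}\mu^2(n+t)+\sum_{n\leq x}\mu(n+t)-\sum_{n\leq x}\mu^2(n)\mu^2(n+t)-\sum_{n\leq x}\mu^2(n)\mu(n+t),
\end{equation}
each of which has already been evaluated or estimated earlier in the paper. The first is $\zeta(2)^{-1}x+O(x^{1/2})$ by Lemma \ref{lem9339MN.107} (applied to the shift $n+t$), the second is $O\!\left(xe^{-c\sqrt{\log x}}\right)$ by Theorem \ref{thm2222.500}, the third is $s_1 x+O(x^{2/3})$ by Theorem \ref{thm8009MN.200}, and the fourth is $O(x/(\log x)^c)$ by Lemma \ref{lem8009MN.350}. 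Collecting the two surviving main terms gives $\tfrac{1}{2}\left(\zeta(2)^{-1}-s_1\right)x=s_2x$, which is the asserted leading behaviour; the density statement then follows at once on dividing by $x$ and letting $x\to\infty$, while positivity $s_2>0$ is secured by the numerical inequality $\zeta(2)^{-1}=0.6079\ldots>s_1=0.3226\ldots$.

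Since every ingredient is a previously established estimate, there is no genuine analytic difficulty; the only point requiring care, and the true obstacle to the stated error term, is the bookkeeping of the error contributions. The squarefree autocorrelation term in the third sum contributes $O(x^{2/3})$, but the two sums carrying a single factor $\mu(n+t)$ are controlled only by $O\!\left(xe^{-c\sqrt{\log x}}\right)$ and $O(x/(\log x)^c)$, both of which \emph{exceed} $x^{2/3}$ for large $x$. To obtain the clean $O(x^{2/3})$ one would therefore need a power-saving bound on $\sum_{n\leq x}\mu^2(n)\mu(n+t)$ and on $\sum_{n\leq x}\mu(n+t)$ (for instance under the Riemann Hypothesis); otherwise the honest error term is $O(x/(\log x)^c)$, and I would prefer to state the conclusion in that weaker but unconditional form, leaving the density $\delta^{01}(t)=(\zeta(2)^{-1}-s_1)/2$ unaffected.
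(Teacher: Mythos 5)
Your proposal is essentially identical to the paper's own proof: the same starting identity $2R^{01}(t,x)=\sum_{n\le x}\left(1-\mu^2(n)\right)\left(1+\mu(n+t)\right)\mu^2(n+t)$, the same expansion into four sums, and the same four references (Lemma \ref{lem9339MN.107}, Theorem \ref{thm8009MN.200}, Theorem \ref{thm2222.500}, Lemma \ref{lem8009MN.350}) for their evaluation. Your closing caveat is also accurate and applies equally to the paper's own argument: the stated error term $O\left(x^{2/3}\right)$ is not supported by these ingredients, since the two sums carrying a single factor $\mu(n+t)$ are only bounded by $O\left(xe^{-c\sqrt{\log x}}\right)$ and $O\left(x/(\log x)^{c}\right)$, so the honest unconditional error term is $O\left(x/(\log x)^{c}\right)$, with the density conclusion unaffected.
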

\begin{proof}The counting function for double-sign pattern $(\epsilon_0,\epsilon_1)=(0,1)$ is
\begin{eqnarray}   \label{eq8811MN.320}
2R^{01}(t,x)
&=&\sum_{n\leq x}\left( 1-\mu^2(n)\right) \left( 1+\mu(n+t)\right)\mu^2(n+t)\\
&=&\sum_{n\leq x}\mu^2(n+t)-\sum_{n\leq x}\mu^2(n)\mu^2(n+t)\nonumber\\
		&&\hskip 1 in +\sum_{n\leq x}\mu^3(n+t)-\sum_{n\leq x}\mu^2(n)\mu^3(n+t) \nonumber\\
		&\geq&0\nonumber.
	\end{eqnarray}
	The last four finite sums have the following evaluations or estimates.
\begin{enumerate}
\item $ \displaystyle \sum_{n\leq x}\mu^2(n+t)=\zeta(2)^{-1}x+O\left(x^{1/2} \right), $\tabto{8cm}see Lemma \ref{lem9339MN.107},
\item $ \displaystyle \sum_{n\leq x}\mu^2(n)\mu^2(n+t)=s_0x+O\left(x^{2/3} \right), $
\tabto{8cm}see Theorem \ref{thm8009MN.200},
\item $ \displaystyle \sum_{n\leq x}\mu^3(n+t)=O\left(xe^{-c\sqrt{\log x}} \right), $
\tabto{8cm}see Theorem \ref{thm2222.500},
\item $ \displaystyle \sum_{n\leq x}\mu^2(n)\mu^3(n+t)=O\left(xe^{-c\sqrt{\log x}} \right), $
\tabto{8cm}see Lemma \ref{lem8009MN.350},
	\end{enumerate}
where $s_0=s_0(t)>0$ is a constant, and $c>0$ is an absolute constant. Summing these evaluations or estimates verifies the claim for $R^{01}(t,x)\geq0$. 
\end{proof}

The same counting function and natural density given in Lemma \ref{lem8811MN.300} applies to any of the trivial double-sign patterns $(0,-1)$, $(1,0)$,$(-1,0)$. The verification is similar, mutatis mutandis. \\

The numerical value of these densities are the followings.
\begin{enumerate}
	\item $\displaystyle s_1= 0.32263461660543396347\ldots$,\tabto{8cm}computed in Example \ref{exa8009MN.250},
	\item $\displaystyle s_2=1-2\zeta(2)^{-1}+s_1=0.106780412897381\ldots,
	$
	\item $\displaystyle s_3=(\zeta(2)^{-1}-s_1)/2=0.142646242624296\ldots .
	$
\end{enumerate}
\subsection{Double-Sign Patterns Counting Functions} \label{S8844MN}
The single-sign patterns $\mu(n)=1$ and $\mu(n)=-1$ have single-sign pattern counting functions of the forms
\begin{equation}\label{eq8844MN.100A}
	R^{+}(x)=\sum_{\substack{n\leq x\\ \mu(n)=1}}1=\sum_{n\leq x}\left( \frac{1+\mu(n)}{2}\right) \mu^2(n)=\frac{1}{2}\frac{x}{\zeta(2)}+O\left( xe^{-c\sqrt{\log x}}\right), 
\end{equation}
and 
\begin{equation}\label{eq8844MN.100B}
	R^{-}(x)=\sum_{\substack{n\leq x\\ \mu(n)=-1}}1=\sum_{n\leq x}\left( \frac{1-\mu(n)}{2}\right) \mu^2(n)=\frac{1}{2}\frac{x}{\zeta(2)}+O\left( xe^{-c\sqrt{\log x}}\right), 
\end{equation}
these follow from Lemma \ref{lem9339MN.107} and Theorem \ref{thm2222.500}. In terms of these functions, the summatory Mobius function has the asymptotic formula
\begin{eqnarray}\label{eq8844MN.100C}
	R(x)&=&\sum_{n\leq x}\mu(n)\\
	&=&R^{+}(x)-R^{-}(x)\nonumber\\
	&=&\left( \left(\frac{1}{2}\frac{x}{\zeta(2)}\right) +O\left( xe^{-c\sqrt{\log x}}\right) \right)-\left( \left(\frac{1}{2}\frac{x}{\zeta(2)}\right) +O\left( xe^{-c\sqrt{\log x}}\right) \right) \nonumber\\
	&=&O\left( xe^{-c\sqrt{\log x}}\right)\nonumber. 
\end{eqnarray}
Basically, it is a different form of the Prime Number Theorem
\begin{equation}\label{eq8844MN.100D}
	\pi(x)=\li(x)+O\left(xe^{-c\sqrt{\log x}} \right) ,	
\end{equation}
where $\li(x)=\int_2^2(\log t)^{-1}dt$ is the logarithm integral, and $c>0$ is an absolute constant, see \cite[Eq.~27.12.5]{DLMF}, \cite[Theorem 3.10]{EL1985}, et alii. \\

The same principle is applied to the double-sign patterns $(\mu(n),\mu(n+t))=(\pm1,\pm1)$ to derive the extended results provided here. \\

The double-sign pattern counting functions are defined by
\begin{equation}\label{eq8844MN.110A}
	R^{++}(t,x)=\sum_{\substack{n\leq x\\ \mu(n)=1,\; \mu(n+t)=1}}1=\sum_{\substack{n\leq x\\ n\in \mathcal{N}_{\mu}^{++}(t)}}1,
\end{equation}
\begin{equation}\label{eq8844MN.110B}
	R^{+-}(t,x)=\sum_{\substack{n\leq x\\ \mu(n)=1,\; \mu(n+t)=-1}}1=\sum_{\substack{n\leq x\\ n\in \mathcal{N}_{\mu}^{+-}(t)}}1,
\end{equation}
\begin{equation}\label{eq8844.110C}
	R^{-+}(t,x)=\sum_{\substack{n\leq x\\ \mu(n)=-1,\; \mu(n+t)=1}}1=\sum_{\substack{n\leq x\\ n\in \mathcal{N}_{\mu}^{-+}(t)}}1,
\end{equation}
\begin{equation}\label{eq8844MN.110D}
	R^{- -}(t,x)=\sum_{\substack{n\leq x\\ \mu(n)=-1,\; \mu(n+t)=-1}}1=\sum_{\substack{n\leq x\\ n\in \mathcal{N}_{\mu}^{- -}(t)}}1.
\end{equation}

The double-sign pattern counting functions \eqref{eq8844MN.110A} to \eqref{eq8844MN.110D} are precisely the cardinalities of the subsets of integers
\begin{multicols}{2}
	\begin{enumerate}
		\item $\mathcal{N}_{\mu}^{++}(t)\subset \N$ ,
		\item $\mathcal{N}_{\mu}^{+-}(t)\subset \N$ ,
		\item $\mathcal{N}_{\mu}^{-+}(t)\subset \N$ ,
		\item $\mathcal{N}_{\mu}^{--}(t)\subset \N$ ,
	\end{enumerate}
\end{multicols}
defined in \eqref{eq8833MN.210}. 
The next result is required to complete the analysis of the asymptotic formula for $R(t,x)$, which is completed in the next section.

\begin{lem}\label{lem8844MN.300} Let $x\geq1$ be a large number, and let $t\ne0$ be a fixed integer. Then, 
	\begin{equation}\label{eq8844MN.300}
		R^{\pm \pm}(t,x)= \frac{1}{4}s_1x+ \frac{1}{4}R(t,x)+O\left(\frac{x}{(\log x)^{c}} \right) ,\nonumber
	\end{equation}
	where $s_1=s_1(t)>0$, and $c>0$ are constants.
\end{lem}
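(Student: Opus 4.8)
The plan is to imitate the proof of Lemma \ref{lem8877MP.300} almost verbatim, working over the integers rather than the shifted primes and substituting the integer analogues of the three input estimates. Without loss of generality I would treat the pattern $(\mu(n),\mu(n+t))=(+1,+1)$; the remaining three cases differ only by sign bookkeeping and are handled identically. Using the characteristic function of Lemma \ref{lem8833MN.200}, I would write
\begin{equation}
4R^{++}(t,x)=\sum_{n\leq x}\mu^2(n)\mu^2(n+t)\bigl(1+\mu(n)\bigr)\bigl(1+\mu(n+t)\bigr)\nonumber
\end{equation}
and expand the product $(1+\mu(n))(1+\mu(n+t))$ into its four monomials, producing the four sums $\sum\mu^2(n)\mu^2(n+t)$, $\sum\mu^3(n)\mu^2(n+t)$, $\sum\mu^2(n)\mu^3(n+t)$, and $\sum\mu^3(n)\mu^3(n+t)$.

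The four sums are then evaluated term by term. For the first, Theorem \ref{thm8009MN.200} applied to the pair of shifts $0$ and $t$ gives $\sum_{n\leq x}\mu^2(n)\mu^2(n+t)=s_1x+O(x^{2/3+\varepsilon})$, where $s_1=s_1(t)>0$ is the associated density constant. For the last, the pointwise identity $\mu^{3}(n)=\mu(n)$ (valid since $\mu(n)\in\{-1,0,1\}$) collapses $\sum\mu^3(n)\mu^3(n+t)$ to exactly $R(t,x)=\sum_{n\leq x}\mu(n)\mu(n+t)$. The two middle cross terms are $\sum_{n\leq x}\mu(n)\mu^2(n+t)$ and $\sum_{n\leq x}\mu^2(n)\mu(n+t)$; the latter is precisely the quantity bounded by Lemma \ref{lem8009MN.350}, and the former reduces to it under the substitution $n\mapsto n+t$, which only replaces $t$ by the still-admissible value $-t$, so both are $O\!\left(x/(\log x)^{c}\right)$. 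Assembling the four contributions and dividing by $4$ yields $R^{++}(t,x)=\tfrac14 s_1x+\tfrac14 R(t,x)+O\!\left(x/(\log x)^{c}\right)$, since the polynomial error $x^{2/3+\varepsilon}$ is absorbed into the logarithmic error for every fixed $c>0$.

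There is no substantive obstacle, as every ingredient is already in hand; the only steps requiring care are the symmetry reduction of the cross term $\sum\mu(n)\mu^2(n+t)$ to Lemma \ref{lem8009MN.350}, and the tracking of the sign of the $R(t,x)$ term across the four patterns. In the $(+,+)$ and $(-,-)$ cases the autocorrelation enters with coefficient $+\tfrac14$, while for the mixed patterns $(+,-)$ and $(-,+)$ the factor $(1\pm\mu(n))(1\mp\mu(n+t))$ produces $-\tfrac14 R(t,x)$. This is exactly the sign distribution needed so that the alternating combination $R^{++}-R^{+-}+R^{--}-R^{-+}$ recovers $R(t,x)$ consistently, which is all the subsequent equidistribution and autocorrelation analysis will require.
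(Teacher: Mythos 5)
Your proposal follows essentially the same route as the paper's proof: expand $4R^{++}(t,x)$ via the characteristic function of Lemma \ref{lem8833MN.200} into the four monomial sums, evaluate the squarefree pair via Theorem \ref{thm8009MN.200}, kill the two cross terms with Lemma \ref{lem8009MN.350}, and collapse $\mu^3$ to $\mu$ in the last term. Your explicit remarks on reducing $\sum_{n\le x}\mu(n)\mu^2(n+t)$ to Lemma \ref{lem8009MN.350} by the shift $n\mapsto n+t$, and on the sign $\pm\tfrac14 R(t,x)$ for the mixed patterns, are in fact slightly more careful than the paper's own write-up, which asserts $+\tfrac14 R(t,x)$ uniformly.
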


\begin{proof} Without loss in generality, consider the double-sign pattern $(\mu(n),\mu(n+t))=(+1,+1)$. Now, use Lemma \ref{lem8833MN.200} to express the double-sign pattern counting function as 
	\begin{eqnarray}   \label{eq4422MN.310}
		4R^{++}(t,x)&=&\sum_{n\leq x}\mu^{++}(t,n)\\
		&=&\sum_{n\leq x}\mu^2(n)\mu^2(n+t)\left( 1+\mu(n)\right) \left( 1+\mu(n+t)\right)\nonumber\\
		&=&\sum_{n\leq x}\mu^2(n)\mu^2(n+t)\left( 1+\mu(n)+\mu(n+t)+\mu(n)\mu(n+t)\right)\nonumber\\
		&=&\sum_{n\leq x}\mu^2(n)\mu^2(n+t)+\sum_{n\leq x}\mu^3(n)\mu^2(n+t) \nonumber\\
		&&\hskip 1 in +\sum_{n\leq x}\mu(n)^2\mu^3(n+t)+\sum_{n\leq x}\mu^3(n)\mu^3(n+t)\nonumber\\
		&\geq&0\nonumber.
	\end{eqnarray}
	The last four finite sums have the following evaluations or estimates.
	\begin{enumerate}
		\item $ \displaystyle \sum_{n\leq x}\mu^2(n)\mu^2(n+t)=s_0(t)x+O\left(x^{2/3} \right), $\tabto{8cm}see Theorem \ref{thm8009MN.200},		
		\item $ \displaystyle \sum_{n\leq x}\mu^3(n)\mu^2(n+t)=O\left(\frac{x}{(\log x)^{c}} \right), $\tabto{8cm}see Lemma \ref{lem8009MN.350},		
		\item $ \displaystyle \sum_{n\leq x}\mu^2(n)\mu^3(n+t)=O\left(\frac{x}{(\log x)^{c}} \right), $\tabto{8cm}
		see Lemma \ref{lem8009MN.350},		
		\item $ \displaystyle \sum_{n\leq x}\mu^3(n)\mu^3(n+t)=\sum_{n\leq x}\mu(n)\mu(n+t), $\tabto{8cm}since $\mu^{2k+1}(n)=\mu(n)$ for $k\geq0$. 
	\end{enumerate}
where $s_1=s_1(t)>0$ is a constant, and $c>0$ is a constant. 
	Summing these evaluations or estimates verifies the claim for $R^{++}(t,x)\geq0$. The verifications for the next three double-sign pattern counting functions $R^{+-}(t,x)\geq0$, $R^{-+}(t,x)\geq0$, and $R^{--}(t,x)\geq0$ are similar.
\end{proof}

\subsection{Equidistribution of Double-Sign Patterns}\label{S5225MN}
The nontrivial result for the summatory Mobius function
\begin{eqnarray}\label{eq5225MN.200}
	\sum_{n\leq x}\mu(n)
	&=&R^{+}(x)-R^{-}(x)=O\left( xe^{-c\sqrt{\log x}}\right)
\end{eqnarray}
has no main term. It vanishes because the number of single-sign patterns $R^{+}(x)=\#\{n\leq x: \mu(n)=1\}$ and $R^{-}(x)=\#\{n\leq x: \mu(n)=-1\}$ have the same cardinality. This implies that the single-sign patterns are equidistributed on the interval $[1,x]$, and each has the natural density $\delta_{\mu}^{\pm}=3/\pi^2$, see \eqref{eq8844MN.100A} for more detail. This idea is extended in the proof of the equidistribution of the double-sign patterns. \\

Recall that $R^{\pm\pm}(t,x)=\#\{n\leq x:\mu(n)=\pm1 ,\mu(n+t)=\pm1\}$, and the natural density of a double-sign pattern is defined by
\begin{equation}\label{eq5225MN.500}
	\delta_{\mu}^{\pm\pm}(t)=  \lim_{x\to \infty}\;	\frac{\#\{n\leq x:\mu(n)=\pm1 ,\mu(n+t)=\pm1\}}{x}.
\end{equation}  

\begin{thm}\label{thm5225MN.500} Let $x\geq1$ be a large number, and let $t\ne0$ be a fixed integer. Then, the double-sign patterns $++$, $+-$, $-+$, and $--$ of the Mobius pair $\mu(n), \mu(n+t)$ are equidistributed on the interval $[1,x]$. In particular, each double-sign pattern has the natural density 
	\begin{equation}\label{eq5225MN.510}
		\delta_{\mu}^{\pm\pm}(t)= \frac{1}{4}s_1,\nonumber
	\end{equation}
	where $s_1=s_1(t)>0$ is a constant.
\end{thm}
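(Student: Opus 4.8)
The plan is to reduce the claimed equidistribution of the four double-sign patterns to the already-established asymptotic formula for the autocorrelation function $R(t,x)=\sum_{n\leq x}\mu(n)\mu(n+t)$ together with the counting identity of Lemma \ref{lem8844MN.300}. First I would recall that Lemma \ref{lem8844MN.300} gives, uniformly for each choice of signs,
\begin{equation}\label{eq5225MN.plan1}
	R^{\pm\pm}(t,x)=\frac{1}{4}s_1x+\frac{1}{4}R(t,x)+O\left(\frac{x}{(\log x)^{c}}\right),
\end{equation}
so the main term $\tfrac14 s_1 x$ is identical across all four patterns, and the only pattern-dependent contribution is the common term $\tfrac14 R(t,x)$ carrying the \emph{same} sign in each formula. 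The key observation is therefore that the differences $R^{\pm\pm}(t,x)-R^{\pm'\pm'}(t,x)$ between any two patterns are governed entirely by the error terms once the shared $\tfrac14 R(t,x)$ cancels.

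Next I would invoke a nontrivial cancellation bound for $R(t,x)$ itself. The autocorrelation $R(t,x)=O\!\left(x(\log\log x)^{-1/2+\varepsilon}\right)$ from Theorem \ref{thm7979MN.200} shows that $\tfrac14 R(t,x)=o(x)$, so dividing \eqref{eq5225MN.plan1} by $x$ and letting $x\to\infty$ kills both the $R(t,x)$ term and the $O(x(\log x)^{-c})$ error. This yields
\begin{equation}\label{eq5225MN.plan2}
	\delta_{\mu}^{\pm\pm}(t)=\lim_{x\to\infty}\frac{R^{\pm\pm}(t,x)}{x}=\frac{1}{4}s_1
\end{equation}
for each of the four sign choices, which simultaneously establishes existence of the natural densities and their common value $\tfrac14 s_1$. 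Since all four densities coincide, the patterns are equidistributed, proving the theorem.

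The main structural point to verify carefully is that the decomposition of Lemma \ref{lem8844MN.300} is genuinely valid for \emph{all four} patterns with the $R(t,x)$ term entering identically; in the $(+,+)$ case the cross term expands as $+\mu(n)\mu(n+t)$, and one must check that in the $(+,-)$, $(-,+)$, and $(-,-)$ expansions the product of the linear factors $\tfrac{1}{2}(1\pm\mu(n))\cdot\tfrac{1}{2}(1\pm\mu(n+t))$ again contributes the term $+\tfrac14\mu(n)\mu(n+t)$ (the sign of the product of the two $\pm$ signs), so that after summation the shared main term $\tfrac14 s_1 x$ and the shared $\tfrac14 R(t,x)$ appear with matching signs. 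The genuine analytic obstacle is entirely contained in the input bound $R(t,x)=o(x)$; equidistribution is immediate once any such cancellation is available, so the hard part is really Theorem \ref{thm7979MN.200}, not the present deduction, which is then a short limiting argument.
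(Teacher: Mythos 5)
There is a circularity in your choice of input bound. You take $R(t,x)=O\left(x(\log\log x)^{-1/2+\varepsilon}\right)$ from Theorem \ref{thm7979MN.200}, but in the paper that theorem is proved (in Section \ref{S3322MN}) \emph{by applying the present Theorem \ref{thm5225MN.500}} to the four counting functions $R^{\pm\pm}(t,x)$. So as written your deduction assumes what the paper later derives from it. The paper's own proof avoids this by obtaining $R(t,x)\ll x(\log\log x)^{-1/2+\varepsilon}$ from Lemma \ref{lem7766MN.200}, which converts the externally cited logarithmic-average estimate \eqref{eq7979MN.100} of Helfgott--Radziwill into an arithmetic-average bound by partial summation; that is the non-circular source of the cancellation $R(t,x)=o(x)$. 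The repair is one line --- cite Lemma \ref{lem7766MN.200} instead of Theorem \ref{thm7979MN.200} --- and with that substitution your argument coincides with the paper's: Lemma \ref{lem8844MN.300} plus $R(t,x)=o(x)$, divide by $x$, take the limit.

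A secondary point: the "structural check" you flag is resolved the opposite way from what you assert. Expanding $\tfrac{1}{2}\left(1\pm\mu(n)\right)\cdot\tfrac{1}{2}\left(1\pm\mu(n+t)\right)$, the cross term is $\tfrac{1}{4}(\pm)(\pm)\,\mu(n)\mu(n+t)$, so for the mixed patterns $(+,-)$ and $(-,+)$ the contribution is $-\tfrac{1}{4}R(t,x)$, not $+\tfrac{1}{4}R(t,x)$; the signs do \emph{not} match across all four patterns (compare the Liouville analogue, Lemma \ref{lem8877LN.300}, which correctly records $\pm\tfrac{1}{4}Q(t,x)$). This does not damage the conclusion, since $\pm\tfrac{1}{4}R(t,x)=o(x)$ either way and the common main term $\tfrac{1}{4}s_1x$ is unaffected, but the claim that the $R(t,x)$ term "enters identically" in all four formulas should be corrected rather than verified.
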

\begin{proof} By Lemma \ref{lem7766MN.200}, $R(t,x)=o(x)$. Accordingly, the limit of the proportion of double-sign pattern
	\begin{eqnarray}\label{eq8844MN.550}
		\delta_{\mu}^{\pm\pm}(t)&=&  \lim_{x\to \infty}\;	\frac{\{n\leq x:\mu(n)=\pm1 ,\mu(n+t)=\pm1\}}{x}\nonumber\\
		&=&\lim_{x\to \infty}\;	\frac{s_1x+R(t,x)+O(x(\log x)^{-c})}{4x}\nonumber\\
		&=&\frac{1}{4}s_1\nonumber.
	\end{eqnarray} 
\end{proof}

This proves that the double-sign patterns $++$, $+-$, $-+$, and $--$ are equidistributed on the interval $[1,x]$
as $x\to \infty$. 
\begin{exa}\label{exa5225MN.600}{\normalfont Let $t=1$. The constant $s_1=s_1(1)=0.322634\ldots$ for the double-sign patterns $\mu(n)=\pm1, \mu(n+1)=\pm1$ is computed in Example \ref{exa8009MN.250}. Thus, by Theorem \ref{thm5225MN.500}, in any sufficiently large interval $[1,x]$, the number of double-sign patterns
		\begin{equation}\label{eq5225MN.610}
			R^{\pm\pm}(t,x)	=\delta_{\mu}^{\pm\pm}(t)x+O\left( \frac{x}{\log \log x)^{1/2-\varepsilon}}\right) 
		\end{equation}		
		are the followings.
		\begin{enumerate}
\item$ \displaystyle R^{++}(t,x)=\frac{0.3226\ldots}{4}x+O\left( \frac{x}{\log \log x)^{1/2-\varepsilon}}\right)$, for $\mu(n)=1, \mu(n+1)=1,$
\item$ \displaystyle R^{+-}(t,x)=\frac{0.3226\ldots}{4}x+O\left( \frac{x}{\log \log x)^{1/2-\varepsilon}}\right)$,
			for $\mu(n)=1, \mu(n+1)=-1,$
\item$ \displaystyle R^{-+}(t,x)=\frac{0.3226\ldots}{4}x+O\left( \frac{x}{\log \log x)^{1/2-\varepsilon}}\right)$, for $\mu(n)=-1, \mu(n+1)=1,$			
			\item$ \displaystyle R^{--}(t,x)=\frac{0.3226\ldots}{4}x+O\left( \frac{x}{\log \log x)^{1/2-\varepsilon}}\right)$, for $\mu(n)=-1, \mu(n+1)=-1.$			
		\end{enumerate}
		Consequently, the main term of the autocorrelation function
		\begin{eqnarray}\label{eq5225MN.620}
			\sum_{n\leq x}\mu(n)\mu(n+1)
			&=&R^{++}(t,x)-R^{+-}(t,x)+R^{--}(t,x)-R^{-+}(t,x)\nonumber\\
			&=&O\left( x(\log \log x)^{-1/2+\varepsilon}\right)	.
		\end{eqnarray}
		vanished. For $x=10^4$, the actual value of the autocorrelation function is
		\begin{equation}\label{eq5225MN.630}
			\sum_{n\leq x}\mu(n)\mu(n+1)
			=12,
		\end{equation}
		and the actual values of the double-sign counting functions are tabulated below.
		\begin{center}
			\begin{tabular}{ c|c|c } 
				$\mu(n)$ & $\mu(n+1)$ & $R^{\pm\pm}(1,x)$ \\
				\hline 
				$+1$ & $+1$ & $3228/4$ \\ 
				$+1$ & $-1$ & $3152/4$ \\ 
				$-1 $& $+1 $& $3282/4$ \\
				$-1$ &$ -1 $& $3256/4$ \\		
			\end{tabular}
		\end{center}
		The differences among the double-sign counting functions $R^{\pm\pm}(1,x)$ seem to be properties of the biases toward the different double-sign patterns.

	}
\end{exa}

\section{Result for the Mobius Autocorrelation Function over the Integers} \label{S3322MN}
The elementary results presented in the previous sections are utilized here to prove an effective form of the autocorrelation function $\sum_{n\leq x}\mu(n)\mu(n+t)$, which is a significant improvement over the current results in the literature, see \eqref{eq7979MN.100}. The average result claims that
\begin{equation}\label{eq3322MN.400}
	\sum_{t\leq T}	\sum_{n\leq x}\mu(n)\mu(n+t)=o(Tx),
\end{equation}
see \cite[Theorem 1.1]{MR2015} for the precise details, and the conjecture due to Chowla, see \cite{CS1965}, and \cite{RO2018}, claims that 
\begin{equation}\label{eq3322MN.410}
	\sum_{n\leq x}\mu(n)\mu(n+t)=o(x),
\end{equation}
for any small fixed integer $t\ne0$.

\begin{proof}[\textbf{Proof:} {\normalfont \textbf{(Theorem \ref{thm7979MN.200})}}] By Theorem \ref{thm5225MN.500}, each double-sign pattern has the asymptotic formula
\begin{equation}\label{eq3322MN.720}
	R^{\pm\pm}(t,x)	=\frac{1}{4}s_1x+O\left( x(\log \log x)^{-1/2+\varepsilon}\right).
\end{equation}	
In terms of the previous double-sign pattern counting functions, the Mobius autocorrelation function has form 
	\begin{eqnarray}\label{eq3322MN.700}
	R(t,x)&=&\sum_{n\leq x}\mu(n)\mu(n+t)\\
	&=&R^{++}(t,x)-R^{+-}(t,x)+R^{--}(t,x)-R^{-+}(t,x)\nonumber\\
	&=&\;\;\frac{1}{4}s_1(t)x+ O\left( x(\log \log x)^{-1/2+\varepsilon}\right) -\frac{1}{4}s_1(t)x+ O\left( x(\log \log x)^{-1/2+\varepsilon}\right) \nonumber\\
	&& + \frac{1}{4}s_1(t)x+ O\left( x(\log \log x)^{-1/2+\varepsilon}\right)  -\frac{1}{4}s_1(t)x+ O\left( x(\log \log x)^{-1/2+\varepsilon}\right)\nonumber\\
	&=& O\left( x(\log \log x)^{-1/2+\varepsilon}\right) \nonumber,	 		
\end{eqnarray}	
where $\varepsilon>0$ is an arbitrarily small number.	
Quod erat demonstrandum.
\end{proof}

\section{Results for the Liouville Autocorrelation Function over the Integers}\label{S7979LN}
The current estimate of the logarithmic average order of the autocorrelation of the Liouville function $\lambda$ has the asymptotic formula
\begin{equation} \label{eq7766LN.100}
	\sum_{n \leq x} \frac{\lambda(n) \lambda(n+t)}{n} =O \left (\frac{\log x}{\sqrt{\log \log x} } \right ),
\end{equation} 
where $t \ne0$ is a fixed parameter, in \cite[Corollary 1.5]{HR2021} and \cite[Corollary 2]{HH2022}. This improves the estimate $O((\log x)(\log \log \log x)^{-c})$, where $c>0$ is a constant, described in \cite[p. 5]{TT2015}. Here, the following result is considered. 

\begin{thm} \label{thm7979LN.200} Let $\lambda:\mathbb{N} \longrightarrow \{-1,1\}$ be the Liouville function, and let $a,b \in \Z$ be a pair of fixed integers such that $a\ne b$. Then, for any sufficiently large number $x>1$, 
	\begin{equation} \label{eq7979L.200}
		\sum_{n \leq x} \lambda(n+a) \lambda(n+b) =O\left(xe^{-c\sqrt{\log x}} \right), 
	\end{equation}	
where $c>0$ is an absolute constant. 
\end{thm}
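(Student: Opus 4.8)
The plan is to reuse the double-sign pattern bookkeeping developed for the Mobius autocorrelation over the integers in Section~\ref{S3322MN}, taking advantage of the feature that distinguishes the integer setting from the shifted-prime setting: over the integers a single shifted Liouville sum is merely a translate of an ordinary summatory Liouville function. First I would introduce the four double-sign pattern counting functions
\[
Q^{\pm\pm}(a,b,x)=\#\{n\le x:\ \lambda(n+a)=\pm1,\ \lambda(n+b)=\pm1\},
\]
and sort the integers $n\le x$ according to whether the two signs agree or disagree, which gives the identity
\[
\sum_{n\le x}\lambda(n+a)\lambda(n+b)=Q^{++}(a,b,x)+Q^{--}(a,b,x)-Q^{+-}(a,b,x)-Q^{-+}(a,b,x).
\]

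Next, for each pattern I would expand the corresponding indicator $\left(\tfrac{1\pm\lambda(n+a)}{2}\right)\left(\tfrac{1\pm\lambda(n+b)}{2}\right)$ and sum over $n\le x$. Writing $S=\sum_{n\le x}\lambda(n+a)\lambda(n+b)$, the expansion of, say, the $(+,+)$ indicator produces the four pieces $\sum_{n\le x}1=x+O(1)$, the two linear sums $\sum_{n\le x}\lambda(n+h)$ with $h\in\{a,b\}$, and the quadratic sum $S$ itself. The decisive simplification is that each linear sum telescopes, $\sum_{n\le x}\lambda(n+h)=\sum_{h<m\le x+h}\lambda(m)=O\!\left(xe^{-c\sqrt{\log x}}\right)$ by Theorem~\ref{thm2299.500}, so it is absorbed into the error. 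This yields $Q^{++}=\tfrac14 x+\tfrac14 S+O(xe^{-c\sqrt{\log x}})$, and the identical computation gives $Q^{--}=\tfrac14 x+\tfrac14 S+O(xe^{-c\sqrt{\log x}})$ together with $Q^{+-}=Q^{-+}=\tfrac14 x-\tfrac14 S+O(xe^{-c\sqrt{\log x}})$.

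The main obstacle appears exactly at the last substitution. Feeding these four evaluations back into the decomposition, the leading terms $\tfrac14 x$ cancel in pairs and so do the $\tfrac14 S$ contributions, leaving the tautology $S=S+O(xe^{-c\sqrt{\log x}})$. Thus the sign-pattern accounting, on its own, merely re-expresses the autocorrelation in terms of itself and proves nothing; this is precisely the obstruction that forces the companion argument of Section~\ref{S3322MN} to import an independent estimate for the correlation. To break the circularity I would inject the logarithmic-average bound \eqref{eq7766LN.100} through the partial-summation bridge of Lemma~\ref{lem7766MN.200}, which controls the $\tfrac14 S$ term inside each $Q^{\pm\pm}$ and renders the four pattern counts equidistributed with common main term $\tfrac14 x$.

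The genuinely delicate point is then the strength of the error term. The route above only transmits whatever saving the input carries, so through Lemma~\ref{lem7766MN.200} it delivers at best $O\!\left(x(\log\log x)^{-1/2+\varepsilon}\right)$, and the single-shift cancellation coming from Theorem~\ref{thm2299.500} never reaches the two-point correlation. Attaining the stated Prime-Number-Theorem-type saving $O\!\left(xe^{-c\sqrt{\log x}}\right)$ is therefore the hard part, and I would expect it to require genuine control of $\sum_{n\le x}\lambda\big((n+a)(n+b)\big)$ well beyond what the elementary sign-pattern decomposition can supply.
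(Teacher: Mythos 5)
Your diagnosis is exactly right, and it is worth being explicit that what you have written is not a proof of Theorem \ref{thm7979LN.200} but a demonstration that the sign-pattern method, by itself, cannot prove it. Expanding the four indicators gives $Q^{\pm\pm}(a,b,x)=\tfrac14[x]\pm\tfrac14 S+O\left(xe^{-c\sqrt{\log x}}\right)$ with $S$ the correlation sum itself (this is the content of Lemma \ref{lem8877LN.300}), and substituting these back into $S=Q^{++}+Q^{--}-Q^{+-}-Q^{-+}$ returns the tautology $S=S+O\left(xe^{-c\sqrt{\log x}}\right)$. The decomposition is information-free, and the only independent input available, the logarithmically averaged bound \eqref{eq7766LN.100} fed through the partial-summation bridge of Lemma \ref{lem7766LN.200}, yields at best $O\left(x(\log\log x)^{-1/2+\varepsilon}\right)$, which is nowhere near the stated $O\left(xe^{-c\sqrt{\log x}}\right)$. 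An unconditional bound of that strength for the two-point Liouville correlation would essentially settle the $k=2$ case of the Chowla conjecture with a prime-number-theorem quality saving, far beyond what Theorem \ref{thm2299.500} plus elementary bookkeeping can deliver.

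Comparing with the paper's own argument in Subsection \ref{S2626N}: it attempts to escape the circularity you identified by invoking Lemma \ref{lem9282LN.500}, which asserts that the joined counts $Q^{++}+Q^{--}$ and $Q^{+-}+Q^{-+}$ each equal $\tfrac12[x]+O\left(xe^{-c\sqrt{\log x}}\right)$. The proof of that lemma, however, rests on the claim that there is a permutation of $\{1,\dots,[x]\}$ carrying $\{n\le x:\lambda(n)\lambda(n+t)=+1\}$ onto $\{m\le x:\lambda(m)=+1\}$ up to an error $E_0(x)=o(x)$. The existence of such a near-bijection between two finite sets is precisely the assertion that they have the same cardinality up to $o(x)$, which is the statement being proved; no independent justification is offered. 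In other words, the paper's proof contains, in disguised form, exactly the circularity you flagged, and your conclusion that the stated error term cannot be reached by this route is the correct assessment of the situation.
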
	

Subsection \ref{S7766LN} to Subsection \ref{S8877LN} cover basic materials in analytic number theory, and new materials. The proof of Theorem \ref{thm7979LN.200} is spliced together in Subsection \ref{S5225LN}. 

\subsection{Logarithm Average and Arithmetic Average Connection} \label{S7766LN}
Let $f: \N \longrightarrow \C$ be an arithmetic function. The connection between the logarithm average 
\begin{equation} \label{eq7766LN.050}
	\sum_{n \leq x} \frac{f(n) }{n} =M_L(x)+E_L(x)
\end{equation}
and the arithmetic average 
\begin{equation} \label{eq7766LN.060}
	\sum_{n \leq x} f(n) =M_A(x)+E_A(x),
\end{equation}
where $M_i(x)$ and $E_i(x)$ are the main terms and error terms respectively, is important in partial summations. The error term $E_L(x)$ required to compute an effective arithmetic average \eqref{eq7766LN.060} directly from the logarithm average \eqref{eq7766LN.050} is explained in \cite[Section 2.12]{HA2013}, see also \cite[Exercise 2.12]{HA2013}.

\begin{lem}\label{lem7766LN.100} Let $t\ne0$ be a small integer, and let $x\geq1$ be a large number. An effective nontrivial arithmetic average $\sum_{n \leq x} \lambda(n) \lambda(n+t)=o(x)$ can be computed directly from the logarithm average $A(x)=\sum_{n \leq x} \lambda(n) \lambda(n+t)n^{-1}$ if and only if $A(x)=O(1/f(x))$, where $f(x)$ is a monotonically increasing function. 
\end{lem}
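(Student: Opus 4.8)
The plan is to follow the proof of Lemma \ref{lem7766MN.100} essentially verbatim, replacing the Mobius pair $\mu(n)\mu(n+t)$ by the Liouville pair $\lambda(n)\lambda(n+t)$ throughout. The argument is purely one of partial summation and uses no special property of the underlying function beyond boundedness, so it transfers without modification. First I would introduce the two running sums $B(x)=\sum_{n\leq x}\lambda(n)\lambda(n+t)$ for the arithmetic average and $A(x)=\sum_{n\leq x}\lambda(n)\lambda(n+t)n^{-1}$ for the logarithm average, and record that both are finite sums of terms bounded by $1$ in absolute value.

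For the forward implication, I would assume the arithmetic average is nontrivial, $B(x)=o(x)$, and recover the weight $n$ by writing $\lambda(n)\lambda(n+t)=n\cdot\lambda(n)\lambda(n+t)n^{-1}$. Abel--Stieltjes summation then yields
\[
	B(x)=\int_1^x z\,dA(z)=xA(x)-\int_1^x A(z)\,dz,
\]
and rearranging this identity shows that $A(x)$ must decay at the prescribed rate, namely $A(x)=O(x/f(x))$ for the associated monotone function $f$.

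For the converse, I would start from the hypothesis $A(x)=O(x/f(x))$ with $f$ monotonically increasing, and run the same partial summation in the opposite direction. Substituting the bound on $A$ into the boundary term $xA(x)$ and into the integral $\int_1^x A(z)\,dz$, the monotonicity of $f$ controls the integral and forces $B(x)=o(x)$, which is the desired effective arithmetic average. This establishes the biconditional.

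I do not expect a genuine obstacle here, since the statement is the exact Liouville mirror of the already-established Mobius lemma; the only points requiring care are the boundary contribution in the Stieltjes integration by parts and verifying that the monotonicity of $f$ is precisely what converts the logarithmic decay into the arithmetic $o(x)$ bound. A minor bookkeeping subtlety is reconciling the $O(1/f(x))$ appearing in the statement with the $O(x/f(x))$ that the partial summation naturally produces, which I would address exactly as in Lemma \ref{lem7766MN.100} by absorbing the factor of $x$ into the choice of $f$.
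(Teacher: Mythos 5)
Your proposal matches the paper's own proof, which is indeed just the Mobius lemma (Lemma \ref{lem7766MN.100}) repeated verbatim with $\lambda(n)\lambda(n+t)$ in place of $\mu(n)\mu(n+t)$: partial summation gives $B(x)=xA(x)-\int_1^x A(z)\,dz$ for the forward direction, and the converse is run the same way. You even flag the same $O(1/f(x))$ versus $O(x/f(x))$ bookkeeping discrepancy that is present in the paper's statement and proof, and resolve it the same way the source does.
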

\begin{proof} Assume it is nontrivial. By partial summation, 
	\begin{eqnarray}\label{eq7766LN.110}
		o(x)=\sum_{n \leq x} \lambda(n) \lambda(n+t)&=&\sum_{n \leq x} n \cdot \frac{\lambda(n) \lambda(n+t)}{n} 	\\
		&=&\int_1^x z \,dA(z)\nonumber \\
		&=&xA(x)-\int_1^x A(z)dz\nonumber.
	\end{eqnarray}
	This implies that $A(x)=O(x/f(x))$. Conversely. If $A(x)=O(x/f( x))$, then 
	\begin{equation} \label{eq7766N.120}
		\sum_{n \leq x} \lambda(n) \lambda(n+t) =o(x)
	\end{equation}
	as claimed.
\end{proof}
The known logarithm average \eqref{eq7766LN.100} is not of the form $A(x)=O(1/f(x))$, but it can be used to compute an \textit{absolute} upper bound. 

\begin{lem}\label{lem7766LN.200} Let $t\ne0$ be a small integer, and let $x\geq1$ be a large number. If the logarithm average $A(x)=\sum_{n \leq x} \lambda(n) \lambda(n+t)n^{-1}=O(\log x)(\log\log x)^{-1/2}$, then the arithmetic average $\sum_{n \leq x} \lambda(n) \lambda(n+t)\ll x(\log \log x)^{-1/2+\varepsilon}$, where $\varepsilon>0$ is a small number. 
\end{lem}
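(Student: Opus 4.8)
The plan is to reproduce, essentially verbatim, the Abel-summation comparison carried out for the Mobius pair in Lemma \ref{lem7766MN.200}, since that argument exploits only the boundedness of the summand and the interplay between a sum and its logarithmically weighted version; nothing specific to $\lambda$ (as opposed to $\mu$) is needed. Accordingly, I would set $B(x)=\sum_{n\le x}\lambda(n)\lambda(n+t)$ and argue by contradiction, supposing that the claimed upper bound fails, i.e. that $B(x)\gg x(\log\log x)^{-1/2+\varepsilon}$ for all sufficiently large $x$.

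The key step is partial summation, which expresses the hypothesized logarithm average as a Riemann--Stieltjes integral against $B$:
\[
A(x)=\sum_{n\le x}\frac{\lambda(n)\lambda(n+t)}{n}=\int_{1}^{x}\frac{1}{z}\,dB(z)=\frac{B(x)}{x}+\int_{1}^{x}\frac{B(z)}{z^{2}}\,dz .
\]
Here the boundary contribution $B(x)/x$ is, under the standing assumption, only $O\big((\log\log x)^{-1/2+\varepsilon}\big)$ and is therefore negligible. Inserting the assumed lower bound $B(z)\gg z(\log\log z)^{-1/2+\varepsilon}$ into the remaining integral and substituting $u=\log z$ reduces it to $\int^{\log x}(\log u)^{-1/2+\varepsilon}\,du$, which is $\gg \log x\,(\log\log x)^{-1/2+\varepsilon}$. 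Since $(\log\log x)^{\varepsilon}\to\infty$, this strictly dominates the hypothesized bound $A(x)=O(\log x)(\log\log x)^{-1/2}$, giving the contradiction. Hence $B(x)\ll x(\log\log x)^{-1/2+\varepsilon}$, as asserted.

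The main obstacle---really the only point demanding care---is the evaluation of the tail integral $\int_{2}^{x}z^{-1}(\log\log z)^{-1/2+\varepsilon}\,dz$ and the verification that its growth rate $\log x\,(\log\log x)^{-1/2+\varepsilon}$ genuinely beats the hypothesis $\log x\,(\log\log x)^{-1/2}$ by the factor $(\log\log x)^{\varepsilon}$. A secondary technical nuisance is that the contradiction uses a one-sided lower bound on $B(z)$ inside the integral; to make the Stieltjes integration by parts and the sign bookkeeping rigorous one should either assume $B$ is eventually of a fixed sign on the relevant range or work with $|B|$ and track the boundary term $B(x)/x$ explicitly, confirming it does not interfere with the dominant term. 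Once these elementary estimates are in hand, the contradiction closes and the absolute upper bound follows.
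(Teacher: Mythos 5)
Your argument is essentially identical to the paper's own proof: the same contradiction setup with $B(x)=\sum_{n\le x}\lambda(n)\lambda(n+t)$, the same partial summation $A(x)=B(x)/x+\int_1^x B(z)z^{-2}\,dz$, and the same comparison of $\log x\,(\log\log x)^{-1/2+\varepsilon}$ against the hypothesized $\log x\,(\log\log x)^{-1/2}$. The technical caveats you flag (that negating $\ll$ only gives failure along a subsequence, and that $B$ may change sign, so the lower bound cannot simply be inserted into the integral) are real and are glossed over by the paper's proof as well, so your write-up is, if anything, the more honest account of the same argument.
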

\begin{proof} Assume $B(x)=\sum_{n \leq x} \lambda(n) \lambda(n+t)\gg x(\log \log x)^{-1/2+\varepsilon}$. Then,
	\begin{eqnarray}\label{eq7766LN.210}
		\frac{\log x}{(\log \log x)^{1/2}}&\gg&\sum_{n \leq x} \frac{\lambda(n) \lambda(n+t)}{n}\\
		&=&\int_1^x \frac{1}{z} \,dB(z)\nonumber \\
		&=&\frac{B(z)}{z}+\int_1^x \frac{B(z)}{z^2}dz\nonumber	.
	\end{eqnarray}
	Since the integral \begin{equation}\label{eq7766LN.220}
		\int_1^x \frac{B(z)}{z^2}dz=	\int_2^x\frac{1}{z(\log \log z)^{1/2-\varepsilon}} dz\gg \frac{\log x}{(\log \log x)^{1/2-\varepsilon}},
	\end{equation}
	for sufficiently large $x\geq1$, the assumption is false. Hence, it implies that there is an absolute upper bound $B(x)\ll x(\log \log x)^{-1/2+\varepsilon}$.
\end{proof}
\subsection{Single-Sign Patterns Liouville Characteristic Functions} \label{S8833LN}
The analysis of single-sign pattern characteristic function 
\begin{equation}\label{eq8833LN.100}
	\lambda^{\pm}(n)=\left( \frac{1\pm\lambda(n)}{2}\right)= 	\begin{cases}
		1 &\text{ if } \lambda(n)=\pm1,\\
		0 &\text{ if } \lambda(n)\ne\pm1,
	\end{cases}
\end{equation}
of the subset of integers
\begin{equation}\label{eq8833LN.110}
	\mathcal{N}_{\lambda}^{\pm}	=	\{n\geq 1: \mu(n)=\pm\}
\end{equation} 
is well known. Here, the same idea is extended to the shifted primes.

\begin{lem}\label{lem8833LN.200A} Let $a\ne0$ be an integer, and let $\lambda(n)\in \{-1,1\}$ be the Liouville function. Then,
	\begin{eqnarray}\label{eq8833LN.200A}
		\lambda^{\pm}(a,n)&=&\left( \frac{1\pm\lambda(n+a)}{2}\right)\\	&=&
		\begin{cases}
			1 &\text{ if } \lambda(n+a)=\pm1,\\
			0 &\text{ if } \lambda(n+a)\ne\pm1,\nonumber\\
		\end{cases}
	\end{eqnarray}
	are the characteristic functions of the subset of primes
	\begin{equation}\label{eq8833LN.210A}
		\mathcal{N}_{\lambda}^{\pm }(a)	=	\{n\geq 1: \lambda(n+a)=\pm1\}.
	\end{equation} 
\end{lem}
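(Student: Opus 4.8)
The plan is to verify the two asserted identities by a direct case analysis on the value of $\lambda(n+a)$. The essential observation is that the Liouville function takes values only in $\{-1,1\}$ and never vanishes, so for every $n\geq 1$ and every fixed $a\ne 0$ the shifted value $\lambda(n+a)$ equals exactly one of $+1$ or $-1$. This is precisely the feature that makes the Liouville characteristic function simpler than its Mobius counterpart in Lemma~\ref{lem8833MP.200A}: no squarefree indicator $\mu^2$ is needed to suppress a third admissible value.

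First I would treat the $+$ sign. If $\lambda(n+a)=+1$, then $\lambda^{+}(a,n)=(1+\lambda(n+a))/2=(1+1)/2=1$; if instead $\lambda(n+a)=-1$, which is the only alternative (i.e. $\lambda(n+a)\ne+1$), then $\lambda^{+}(a,n)=(1-1)/2=0$. This is exactly the claimed indicator behaviour, so $\lambda^{+}(a,n)$ is the characteristic function of the set $\mathcal{N}_{\lambda}^{+}(a)=\{n\geq1:\lambda(n+a)=+1\}$. The verification for the $-$ sign is identical upon swapping the roles of $+1$ and $-1$: when $\lambda(n+a)=-1$ one obtains $(1-(-1))/2=1$, and when $\lambda(n+a)=+1$ one obtains $(1-1)/2=0$, so $\lambda^{-}(a,n)$ is the characteristic function of $\mathcal{N}_{\lambda}^{-}(a)$.

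There is no genuine obstacle here; the statement is a definitional identity whose entire content is the two-line substitution above. The one point worth recording, for consistency with the analogous sign-pattern counting arguments in which the lemma is later applied, is that because $\lambda$ omits the value $0$ the two indicators satisfy $\lambda^{+}(a,n)+\lambda^{-}(a,n)=1$ identically, rather than summing to a squarefree indicator $\mu^2(n+a)$ as in the Mobius setting. This is the form in which the lemma will subsequently be invoked when assembling the single-sign counting function $Q^{\pm}(a,x)$.
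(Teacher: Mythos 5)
Your proof is correct: the paper states this lemma without any proof, treating it as an immediate consequence of the definition, and your two-case substitution ($\lambda(n+a)=+1$ gives $1$, $\lambda(n+a)=-1$ gives $0$, and symmetrically for the $-$ sign) is exactly the intended verification. Your side remark that $\lambda^{+}(a,n)+\lambda^{-}(a,n)=1$, with no $\mu^{2}$ factor needed since $\lambda$ never vanishes, correctly identifies the only respect in which this differs from the Mobius analogue and is consistent with how the lemma is later used to build $Q^{\pm}(a,x)$.
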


\subsection{Double-Sign Patterns Liouville Characteristic Functions} \label{S8855LN}
The analysis of single-sign pattern characteristic functions is extended here to the double-sign patterns
\begin{equation}\label{eq8855LN.100}
	\lambda(n)=\pm1 \quad \text{ and }\quad 	\lambda(n+t)=\pm1, 
\end{equation}
where $t\ne0$ is a small integer, and $n\geq1$ is an integer. Some of the research appears in \cite{HA1986}, \cite[Corollary 1.7]{TT2015}, \cite{KP1986}, \cite{SA2022}, and similar literature.

\begin{lem}\label{lem8855LN.200} Let $t\in \Z$ such that $t\ne0$ be small fixed integer, and let $\lambda(n)\in \{-1,1\}$ be the Liouville function. Then,
	\begin{eqnarray}\label{eq8855LN.200}
		\lambda^{\pm \pm}(t,p)&=&\left( \frac{1\pm \lambda(n)}{2}\right)\left( \frac{1\pm\lambda(n+t)}{2}\right)\\	&=&
		\begin{cases}
			1 &\text{ if } \lambda(n)=\pm1,\mu(n+t)=\pm1,\\
			0 &\text{ if } \lambda(n)\ne\pm1,\mu(n+t)\ne\pm1,\nonumber\\
		\end{cases}
	\end{eqnarray}
	are the characteristic functions of the subset of integers
	\begin{equation}\label{eq8855LN.210}
		\mathcal{B}^{\pm \pm}(t)	=	\{n\geq 1: \lambda(n)=\pm1, \lambda(n+t)=\pm1\}.
	\end{equation} 
\end{lem}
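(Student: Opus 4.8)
The plan is to prove the claim by a direct case analysis on the two possible values of the Liouville function at each argument, exactly parallel to the single-sign case of Lemma \ref{lem8833LN.200A}. Since $\lambda$ takes only the values $-1$ and $+1$, the key preliminary observation is that each factor $\frac{1\pm\lambda(\cdot)}{2}$ is already a $\{0,1\}$-valued indicator: the factor $\frac{1+\lambda(n)}{2}$ equals $1$ when $\lambda(n)=+1$ and equals $0$ when $\lambda(n)=-1$, while $\frac{1-\lambda(n)}{2}$ equals $1$ when $\lambda(n)=-1$ and $0$ when $\lambda(n)=+1$. In either choice of sign this says $\frac{1\pm\lambda(n)}{2}=\mathbf{1}[\lambda(n)=\pm1]$, and identically $\frac{1\pm\lambda(n+t)}{2}=\mathbf{1}[\lambda(n+t)=\pm1]$.

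First I would record this indicator identity for each of the two factors separately. Then, because both factors lie in $\{0,1\}$, their product $\lambda^{\pm\pm}(t,n)$ equals $1$ if and only if both factors equal $1$ simultaneously, and equals $0$ otherwise. Translating the two ``factor $=1$'' conditions back through the identity above, the product is $1$ precisely when $\lambda(n)=\pm1$ and $\lambda(n+t)=\pm1$ hold together, which is exactly the piecewise description displayed in \eqref{eq8855LN.200} (with the understanding that the two signs in the superscript correspond to the two chosen signs in the factors).

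Finally I would observe that a $\{0,1\}$-valued function equal to $1$ exactly on the integers $n$ satisfying $\lambda(n)=\pm1$ and $\lambda(n+t)=\pm1$ is by definition the characteristic function of the set $\mathcal{B}^{\pm\pm}(t)$ defined in \eqref{eq8855LN.210}, completing the verification for all four sign combinations $++$, $+-$, $-+$, $--$. There is essentially no genuine obstacle here: the statement is an elementary identity among $\{0,1\}$-valued expressions, and the only point requiring any care is the bookkeeping that matches the superscript signs on $\lambda^{\pm\pm}$ and on $\mathcal{B}^{\pm\pm}$ to the sign choices inside the two factors, so that all four cases are covered consistently.
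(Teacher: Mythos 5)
Your verification is correct: since $\lambda$ takes only the values $\pm1$, each factor $\frac{1\pm\lambda(\cdot)}{2}$ is a $\{0,1\}$-valued indicator and the product is $1$ exactly on $\mathcal{B}^{\pm\pm}(t)$, which is precisely the claim. The paper states this lemma without any proof, treating it as self-evident, so your elementary case analysis is exactly the intended (and only needed) argument.
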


\subsection{Single-Sign Pattern Liouville Counting Functions} \label{S8844LN}
The single-sign patterns $\lambda(n)=1$ and $\lambda(n)=-1$ single-sign pattern counting functions over the integers have the forms
\begin{equation}\label{eq8844LN.100A}
	Q^{+}(x)=\sum_{\substack{n\leq x\\ \lambda(n)=1}}1=\sum_{n\leq x}\left( \frac{1+\lambda(n)}{2}\right) =\frac{1}{2}[x]+O\left( xe^{-c\sqrt{\log x}}\right), 
\end{equation}
and 
\begin{equation}\label{eq8844LN.100B}
	Q^{-}(x)=\sum_{\substack{n\leq x\\ \lambda(n)=-1}}1=\sum_{n\leq x}\left( \frac{1-\lambda(n)}{2}\right) =\frac{1}{2}[x]+O\left(xe^{-c\sqrt{\log x}}\right), 
\end{equation}
where $[x]$ is the largest integer function, respectively. In terms of the single-sign pattern counting functions, the summatory function has the asymptotic formula
\begin{equation}\label{eq8844LN.100C}
	Q(x)=\sum_{n\leq x}\lambda(n)=Q^{+}(x)-Q^{-}(x)=O\left( xe^{-c\sqrt{\log x}}\right). 
\end{equation}
Basically, it is a different form of the Prime Number Theorem
\begin{equation}\label{eq8844LN.100D}
	\pi(x)=\li(x)+O\left(xe^{-c\sqrt{\log x}} \right) ,	
\end{equation}
where $\li(x)=\int_2^2(\log t)^{-1}dt$ is the logarithm integral, and $c>0$ is an absolute constant, see \cite[Eq.~27.12.5]{DLMF}, \cite[Theorem 3.10]{EL1985}, et alii. \\

\subsection{Double-Sign Patterns in Liouville Counting Functions} \label{S8877LN}
The counting functions for the single-sign patterns $\lambda(n)=1$ and $\lambda(n+t)=-1$ are extended to the counting functions for the double-sign patterns 
\begin{equation}\label{eq8877LN.100}
	(\lambda(n),\lambda(n+t))=(\pm1,\pm1).
\end{equation}

The double-sign patterns counting functions are defined by
\begin{equation}\label{eq8877LN.110A}
	Q^{++}(t,x)=\sum_{\substack{n\leq x\\ \lambda(n)=1,\; \lambda(n+t)=1}}1=\sum_{\substack{n\leq x\\ n\in \mathcal{B}^{++}(t)}}1,
\end{equation}
\begin{equation}\label{eq8877LN.110B}
	Q^{+-}(t,x)=\sum_{\substack{n\leq x\\ \lambda(n)=1,\; \lambda(n+t)=-1}}1=\sum_{\substack{n\leq x\\ n\in \mathcal{B}^{+-}(t)}}1,
\end{equation}
\begin{equation}\label{eq8877LN.110C}
	Q^{-+}(t,x)=\sum_{\substack{n\leq x\\ \lambda(n)=-1,\; \lambda(n+t)=1}}1=\sum_{\substack{n\leq x\\ p\in \mathcal{N}_{\lambda}^{-+}(t)}}1,
\end{equation}
\begin{equation}\label{eq8877LN.110D}
	Q^{- -}(t,x)=\sum_{\substack{n\leq x\\ \lambda(n)=-1,\; \lambda(n+t)=-1}}1=\sum_{\substack{n\leq x\\ p\in \mathcal{N}_{\lambda}^{- -}(t)}}1.
\end{equation}

The double-sign patterns counting functions \eqref{eq8877LN.110A} to \eqref{eq8877LN.110D} are precisely the cardinalities of the subsets of integers
\begin{multicols}{2}
	\begin{enumerate}
		\item $\mathcal{N}_{\lambda}^{++}(t)\subset \N$ ,
		\item $\mathcal{N}_{\lambda}^{+-}(t)\subset \N$ ,
		\item $\mathcal{N}_{\lambda}^{-+}(t)\subset \N$ ,
		\item $\mathcal{N}_{\lambda}^{--}(t)\subset \N$ ,
	\end{enumerate}
\end{multicols}
defined in \eqref{eq8855LN.210}. 

\begin{lem}\label{lem8877LN.300} Let $x\geq1$ be a large number, and let $t\in \Z$ such that $t\ne0$, be a fixed integer. If $\lambda: \mathbb{Z} \longrightarrow \{-1,1\}$ is the Liouville function, then, 
	\begin{equation}\label{eq8877LN.300}
		Q^{\pm \pm}(t,x)= \frac{1}{4}[x]\pm \frac{1}{4}Q(t,x)+O\left(xe^{-c\sqrt{\log x}} \right) ,\nonumber
	\end{equation}
	where $c>0$ is an absolute constant.
\end{lem}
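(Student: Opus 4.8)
The plan is to follow the same template used for Lemma~\ref{lem8877LP.300} and Lemma~\ref{lem8844MN.300}, exploiting the fact that each double-sign indicator factors as a product of two single-sign indicators. Without loss of generality I would treat the pattern $(\lambda(n),\lambda(n+t))=(+1,+1)$; the remaining three patterns are handled identically up to sign bookkeeping. Using the characteristic function from Lemma~\ref{lem8855LN.200}, I would write
\begin{equation}
4Q^{++}(t,x)=\sum_{n\leq x}\bigl(1+\lambda(n)\bigr)\bigl(1+\lambda(n+t)\bigr),\nonumber
\end{equation}
and expand the product into the four summands $1$, $\lambda(n)$, $\lambda(n+t)$, and $\lambda(n)\lambda(n+t)$.

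The next step is to evaluate the four resulting sums separately. The constant term gives $\sum_{n\leq x}1=[x]$ exactly, and the mixed term gives $\sum_{n\leq x}\lambda(n)\lambda(n+t)=Q(t,x)$ by definition. For the two linear terms I would invoke Theorem~\ref{thm2299.500}(i), which supplies the unconditional cancellation $\sum_{n\leq x}\lambda(n)=O\bigl(xe^{-c\sqrt{\log x}}\bigr)$. Assembling these gives $4Q^{++}(t,x)=[x]+Q(t,x)+O\bigl(xe^{-c\sqrt{\log x}}\bigr)$, and dividing by $4$ yields the claimed formula with a $+$ sign in front of $\tfrac14 Q(t,x)$. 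Repeating the expansion for the signs $(\pm1,\pm1)$ shows that the sign attached to $\tfrac14 Q(t,x)$ is the product of the two pattern signs, namely $+$ for $++$ and $--$ and $-$ for $+-$ and $-+$, which is exactly the meaning of the $\pm$ in~\eqref{eq8877LN.300}.

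The only point needing slight care, and the closest thing to an obstacle, is the shifted linear sum $\sum_{n\leq x}\lambda(n+t)$. I would reduce it to the unshifted one by re-indexing $m=n+t$, giving $\sum_{n\leq x}\lambda(n+t)=\sum_{t<m\leq x+t}\lambda(m)=\sum_{m\leq x}\lambda(m)+O(|t|)$, where the error $O(|t|)$ is absorbed since $t$ is fixed; hence this sum is also $O\bigl(xe^{-c\sqrt{\log x}}\bigr)$. Because $\lambda$ never vanishes, the single-sign indicator here is simply $(1\pm\lambda)/2$ with no squarefree factor, so, unlike the Mobius analogue in Lemma~\ref{lem8844MN.300}, no density constant intervenes and the constant term contributes the full $\tfrac14[x]$. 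Every ingredient is unconditional and of prime-number-theorem strength, so no hypothesis is required and the error inherits the sharp shape $O\bigl(xe^{-c\sqrt{\log x}}\bigr)$.
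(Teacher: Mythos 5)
Your proposal is correct and follows essentially the same route as the paper: expand $4Q^{\pm\pm}(t,x)=\sum_{n\le x}(1\pm\lambda(n))(1\pm\lambda(n+t))$ into the four sums $[x]$, $\sum\lambda(n)$, $\sum\lambda(n+t)$, and $Q(t,x)$, and bound the two linear sums unconditionally via Theorem~\ref{thm2299.500}. Your explicit re-indexing of the shifted sum and the sign bookkeeping for the four patterns are details the paper leaves implicit, but the argument is the same.
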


\begin{proof} Without loss in generality, consider the pattern $(\lambda(n),\lambda(n+t))=(+1,+1)$. Now, use Lemma \ref{lem8855LN.200} to express the double-sign pattern counting function as 
	\begin{eqnarray}   \label{eq8877LN.310}
		4Q^{++}(t,x)&=&\sum_{n\leq x}\lambda^{++}(t,n)\nonumber\\
		&=&\sum_{n\leq x}\left( 1+\lambda(n)\right) \left( 1+\lambda(n+t)\right)\nonumber\\
		&=&\sum_{n\leq x}\left( 1+\lambda(n)+\lambda(n+t)+\lambda(n)\lambda(n+t)\right)\\
		&=&\sum_{n\leq x}1+\sum_{n\leq x}\lambda(n)  +\sum_{n\leq x}\lambda(n+t)+\sum_{n\leq x}\lambda(n)\lambda(n+t)\nonumber\\
		&\geq&0\nonumber.
	\end{eqnarray}
	The first three finite sums on the last line have the following evaluations or estimates.
	\begin{enumerate}
		\item $ \displaystyle \sum_{p\leq x}1=[x], $ where $[x]$ is the largest integer function.\\
		
		\item $ \displaystyle \sum_{n\leq x}\lambda(n)=O \left (xe^{-c\sqrt{\log x}}\right )$, see Theorem \ref{thm2299.500}.\\
		
		\item $ \displaystyle \sum_{n\leq x}\lambda(n+t)=O \left (xe^{-c\sqrt{\log x}}\right )$, see Theorem \ref{thm2299.500}.
	\end{enumerate}
	In all cases $c>0$ is an absolute constant, unconditionally. Summing these evaluations or estimates verifies the claim for $Q^{++}(t,x)\geq0$. The verifications for the next three double-sign pattern counting functions $Q^{+-}(t,x)\geq0$, $Q^{-+}(t,x)\geq0$, and $Q^{--}(t,x)\geq0$ are similar.
\end{proof}

\subsection{Equidistribution of Double-Sign Patterns}\label{S5225LN}
The nontrivial result for the summatory Liouville function
\begin{eqnarray}\label{eq5225LN.200}
	\sum_{n\leq x}\lambda(n)
	&=&Q^{+}(x)-Q^{-}(x)=O\left( xe^{-c\sqrt{\log x}}\right)
\end{eqnarray}
has no main term. It vanishes because the number of single-sign patterns $Q^{+}(x)=\#\{n\leq x: \lambda(n)=1\}$ and $Q^{-}(x)=\#\{n\leq x: \lambda(n)=-1\}$ have the same cardinality. This implies that the single-sign patterns are equidistributed on the interval $[1,x]$, and each has the natural density $\delta_{\lambda}^{\pm}=1/2$, see \eqref{eq8844LN.100A} to \eqref{eq8877LN.110D} for more detail. This idea is extended in the proof of the equidistribution of the double-sign patterns. \\

Recall that $Q^{\pm\pm}(t,x)=\#\{n\leq x:\lambda(n)=\pm1 ,\lambda(n+t)=\pm1\}$, and the natural density of a double-sign pattern is defined by
\begin{equation}\label{eq5225LN.500}
	\delta_{\lambda}^{\pm\pm}(t)=  \lim_{x\to \infty}\;	\frac{\#\{n\leq x:\lambda(n)=\pm1 ,\lambda(n+t)=\pm1\}}{x}.
\end{equation}  

\begin{thm}\label{thm5225LN.500} Let $x\geq1$ be a large number, and let $t\ne0$ be a fixed integer. Then, the double-sign patterns $++$, $+-$, $-+$, and $--$ of the Liouville pair $\lambda(n), \lambda(n+t)$ are equidistributed on the interval $[1,x]$. In particular, each double-sign pattern has the natural density 
	\begin{equation}\label{eq5225LN.510}
		\delta_{\lambda}^{\pm\pm}(t)= \frac{1}{4}.\nonumber
	\end{equation}
\end{thm}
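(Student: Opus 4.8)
The plan is to mirror the proof of Theorem \ref{thm5225MN.500} for the Mobius case, reducing the equidistribution statement to a single analytic input: that the autocorrelation $Q(t,x)=\sum_{n\le x}\lambda(n)\lambda(n+t)$ is $o(x)$. The counting identity of Lemma \ref{lem8877LN.300} already isolates a common main term $\tfrac14[x]$ for every sign pattern, with only the $\pm\tfrac14 Q(t,x)$ term distinguishing them. Consequently the entire argument comes down to showing that this autocorrelation term is negligible after dividing by $x$, which forces all four densities to coincide.

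First I would record the arithmetic bound for the autocorrelation. Feeding the known logarithmic average \eqref{eq7766LN.100}, namely $\sum_{n\le x}\lambda(n)\lambda(n+t)n^{-1}=O\big(\log x\,(\log\log x)^{-1/2}\big)$, into Lemma \ref{lem7766LN.200} yields
\[
Q(t,x)=\sum_{n\le x}\lambda(n)\lambda(n+t)\ll \frac{x}{(\log\log x)^{1/2-\varepsilon}}=o(x),
\]
for any small $\varepsilon>0$. This is the only nontrivial ingredient; everything that follows is elementary bookkeeping.

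Next I would substitute Lemma \ref{lem8877LN.300} into the definition \eqref{eq5225LN.500} of the density. For each choice of the two signs,
\[
\delta_{\lambda}^{\pm\pm}(t)=\lim_{x\to\infty}\frac{Q^{\pm\pm}(t,x)}{x}
=\lim_{x\to\infty}\frac{\tfrac14[x]\pm\tfrac14 Q(t,x)+O\big(xe^{-c\sqrt{\log x}}\big)}{x}.
\]
Since $[x]/x\to1$, since $Q(t,x)/x\to0$ by the previous step, and since the exponential error term tends to $0$, each of the four limits equals $\tfrac14$. Because $\lambda$ is nowhere zero, every $n\le x$ falls into exactly one of the four patterns, so $Q^{++}+Q^{+-}+Q^{-+}+Q^{--}=[x]$; the four equal densities are therefore mutually consistent and confirm that the patterns are equidistributed.

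The only real obstacle is the appeal to $Q(t,x)=o(x)$, which rests on the deep estimate \eqref{eq7766LN.100} for the logarithmic Liouville correlation together with the partial-summation transfer in Lemma \ref{lem7766LN.200}. Once that input is granted, the remaining steps are routine limits, and the argument is uniform across all four sign patterns, so no separate treatment of the $+-$, $-+$, and $--$ cases is required beyond the trivial sign change in Lemma \ref{lem8877LN.300}.
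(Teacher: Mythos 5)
Your proposal follows the paper's own argument essentially verbatim: both feed the counting identity of Lemma \ref{lem8877LN.300} into the density definition \eqref{eq5225LN.500} and then invoke Lemma \ref{lem7766LN.200} (via the logarithmic-average bound \eqref{eq7766LN.100}) to discard the $Q(t,x)$ term as $o(x)$, giving the common limit $\tfrac14$. The only addition is your consistency check that the four counts sum to $[x]$, which is harmless but not needed.
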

\begin{proof}Observe that for large $x\geq1$, $Q^{\pm\pm}(t,x)>0$, this follows from Lemma \ref{lem8877LN.300}. Next, compute the limit of the proportion of double-sign pattern
	\begin{eqnarray}\label{eq5225LN.550}
		\delta_{\lambda}^{\pm\pm}(t)&=&  \lim_{x\to \infty}\;	\frac{\#\{n\leq x:\lambda(n)=\pm1 ,\lambda(n+t)=\pm1\}}{x}\nonumber\\
		&=&\lim_{x\to \infty}\;	\frac{[x]+Q(t,x)+O\left( xe^{-c\sqrt{\log x}}\right)}{4x}\nonumber\\
		&=&\lim_{x\to \infty}\;	\frac{[x]+o(x)+O\left( xe^{-c\sqrt{\log x}}\right)}{4x}\nonumber\\
		&=&\frac{1}{4}\nonumber,
	\end{eqnarray} 
	since $Q(t,x)\ll  x(\log \log x)^{-1/2+\varepsilon}$, see Lemma \ref{lem7766LN.200}. This proves that the double-sign patterns $++$, $+-$, $-+$, and $--$ are equidistributed on the interval $[1,x]$
	as $x\to \infty$.
\end{proof}

\begin{exa}\label{exa5225LN.600}{\normalfont Let $t=1$. By Theorem \ref{thm5225LN.500}, in any sufficiently large interval $[1,x]$, the number of any double-sign pattern $\lambda(n)=\pm1, \lambda(n+1)=\pm1$ is
		\begin{equation}\label{eq5225LN.610}
			Q^{\pm\pm}(t,x)	=\delta^{\pm\pm}(t)x+o(x)=\frac{1}{4}x+o(x).
		\end{equation}		
		The numerical data for $x=10^4$, shows that the actual value of the autocorrelation function is
		\begin{equation}\label{eq5225LN.630}
			\sum_{n\leq x}\lambda(n)\lambda(n+1)
			=112,
		\end{equation}
		and the actual values of the double-sign pattern counting functions are tabulated below.
		\begin{center}
			\begin{tabular}{ c|c|c|c } 
				$\lambda(n)$ & $\lambda(n+1)$ &Actual Count &Expected $R^{\pm\pm}(1,x)$\\
				\hline 
				$+1$ & $+1$ & $9924/4$&$10000/4+o(x)$ \\ 
				$+1$ & $-1$ & $9888/4$ &$10000/4+o(x)$\\ 
				$-1 $& $+1 $& $9888/4$& $10000/4+o(x)$\\
				$-1$ &$ -1 $& $10300/4$&$10000/4+o(x)$ \\		
			\end{tabular}
		\end{center}
		Given the small scale of this experiment, $x=10^4$, the actual data fits the prediction very well. The tiny differences among the actual values, (in third column), and the prediction by the double-sign pattern counting functions $Q^{\pm\pm}(1,x)$ seem to be properties of the races between the different subsets of integers $\mathcal{N}_{\lambda}^{\pm \pm}(t)$ attached to the double-sign patterns, see \eqref{eq8855N.210}. For an introduction to the literature in comparative number theory, prime number races, and similar topics, see \cite{GM2004}, et cetera.
	}
\end{exa}

\begin{exa}\label{exa5225N.700}{\normalfont Let $t=1$. This is a numerical example over a short interval $[x,x+y]=[10^7,10^7+10^3]$. There is no theoretical result for this case. However, the number of any double-sign pattern $\lambda(n)=\pm1, \lambda(n+1)=\pm1$ should be
		\begin{equation}\label{eq5225N.710}
			Q^{\pm\pm}(t,x)	=\delta^{\pm\pm}(t)x+o(x)=\frac{1}{4}x+o(x).
		\end{equation}		
		The numerical data shows that the actual value of the autocorrelation function is
		\begin{equation}\label{eq5225N.730}
			\sum_{x\leq n\leq x+y}\lambda(n)\lambda(n+1)
			=27,
		\end{equation}
		and the actual values of the double-sign pattern counting functions are tabulated below.
		\begin{center}
			\begin{tabular}{ c|c|c|c } 
				$\lambda(n)$ & $\lambda(n+1)$ &Actual Count &Expected $Q^{\pm\pm}(1,x)$\\
				\hline 
				$+1$ & $+1$ & $1100/4$&$1000/4+o(x)$ \\ 
				$+1$ & $-1$ & $976/4$ &$1000/4+o(x)$\\ 
				$-1 $& $+1 $& $972/4$& $1000/4+o(x)$\\
				$-1$ &$ -1 $& $956/4$&$1000/4+o(x)$ \\		
			\end{tabular}
		\end{center}
	}
\end{exa}

The above analysis and data suggest the followings conditional results
\begin{equation}\label{eq5225N.650}
	Q^{\pm\pm}(t,x)	=\delta_{\lambda}^{\pm\pm}(t)x+O(x^{1/2+\varepsilon})=\frac{1}{4}x+O(x^{1/2+\varepsilon}),
\end{equation}		
and
\begin{equation}\label{eq5225.660}
	\sum_{n\leq x}\lambda(n)\lambda(n+1)
	=O(x^{1/2+\varepsilon}),
\end{equation} 
where $\varepsilon>0$ is arbitrarily small.

\subsection{Equivalent Subsets of Sign Patterns Patterns}\label{S9282LN}
The join double-sign-patterns counting functions is defined by
\begin{equation}\label{eq9282LN.100A}
	Q^{+}(t,x)=\sum_{\substack{n\leq x\\ \lambda(n)=1,\; \lambda(n+t)=1}}1+\sum_{\substack{n\leq x\\ \lambda(n)=-1,\; \lambda(n+t)=-1}}1,
\end{equation}
and
\begin{equation}\label{eq9282LN.100B}
	Q^{-}(t,x)=\sum_{\substack{n\leq x\\ \lambda(n)=1,\; \lambda(n+t)=-1}}1+\sum_{\substack{n\leq x\\ \lambda(n)=-1,\; \lambda(n+t)=+1}}1.
\end{equation}
\begin{lem}\label{lem9282LN.500} Let $x\geq1$ be a large number, and let $t\in \Z$ such that $t\ne0$, be a fixed integer. If $\lambda: \mathbb{Z} \longrightarrow \{-1,1\}$ is the Liouville function, then, 
	\begin{equation}\label{eq9282LN.500}
		Q^{\pm }(t,x)= \frac{1}{2}[x]+O\left(xe^{-c\sqrt{\log x}} \right) ,\nonumber
	\end{equation}
	where $c>0$ is an absolute constant.
\end{lem}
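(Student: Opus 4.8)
The plan is to reduce each joint counting function to a floor term plus a multiple of the Liouville autocorrelation function, and then kill the autocorrelation using the estimate already available. By definition $Q^{+}(t,x)=Q^{++}(t,x)+Q^{--}(t,x)$ collects exactly the $n\leq x$ with $\lambda(n)\lambda(n+t)=+1$, while $Q^{-}(t,x)=Q^{+-}(t,x)+Q^{-+}(t,x)$ collects those with $\lambda(n)\lambda(n+t)=-1$. Writing the four summands through the characteristic factors of Lemma \ref{lem8855LN.200} and expanding, I obtain
\begin{equation}
4Q^{+}(t,x)=\sum_{n\leq x}\bigl(1+\lambda(n)\bigr)\bigl(1+\lambda(n+t)\bigr)+\sum_{n\leq x}\bigl(1-\lambda(n)\bigr)\bigl(1-\lambda(n+t)\bigr).
\end{equation}
When the products are multiplied out, the linear sums $\sum_{n\leq x}\lambda(n)$ and $\sum_{n\leq x}\lambda(n+t)$ enter with opposite signs in the two contributions and cancel, leaving $4Q^{+}(t,x)=2[x]+2Q(t,x)$, where $Q(t,x)=\sum_{n\leq x}\lambda(n)\lambda(n+t)$.

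The same expansion for $Q^{-}(t,x)$, using the identity $(1+\lambda(n))(1-\lambda(n+t))+(1-\lambda(n))(1+\lambda(n+t))=2-2\lambda(n)\lambda(n+t)$, gives $4Q^{-}(t,x)=2[x]-2Q(t,x)$. Both outcomes are captured by the single identity
\begin{equation}
Q^{\pm}(t,x)=\tfrac{1}{2}[x]\pm\tfrac{1}{2}Q(t,x),
\end{equation}
which is nothing but the sum of the two relevant cases of Lemma \ref{lem8877LN.300} (the matching-sign cases $++,--$ for $Q^{+}$ and the opposite-sign cases $+-,-+$ for $Q^{-}$, whose $\tfrac{1}{4}[x]$ main terms double while the $\pm\tfrac{1}{4}Q(t,x)$ terms reinforce).

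It then remains to bound $Q(t,x)$. Applying Theorem \ref{thm7979LN.200} with the shift pair $(a,b)=(0,t)$ yields $Q(t,x)=\sum_{n\leq x}\lambda(n)\lambda(n+t)=O\!\left(xe^{-c\sqrt{\log x}}\right)$ for an absolute $c>0$, and substituting this into the displayed identity gives $Q^{\pm}(t,x)=\tfrac{1}{2}[x]+O\!\left(xe^{-c\sqrt{\log x}}\right)$, as asserted. There is no genuine difficulty here: the content is the exact cancellation of the first-order Liouville terms between the two sign classes, after which a single invocation of the autocorrelation bound finishes the argument. The one point worth flagging is that the strong error term must be imported from Theorem \ref{thm7979LN.200}; the weaker logarithmic-average bound $Q(t,x)=o(x)$ coming from Lemma \ref{lem7766LN.200} would only give $Q^{\pm}(t,x)=\tfrac{1}{2}[x]+o(x)$ and would not match the stated estimate.
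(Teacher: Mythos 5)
Your algebraic reduction is correct: expanding the characteristic factors exactly as in Lemma \ref{lem8877LN.300} and adding the matching-sign (resp.\ opposite-sign) cases does give the exact identity $Q^{\pm}(t,x)=\tfrac{1}{2}[x]\pm\tfrac{1}{2}Q(t,x)$, with the linear terms $\sum_{n\leq x}\lambda(n)$ and $\sum_{n\leq x}\lambda(n+t)$ cancelling identically. The problem is the final step. You close the argument by importing $Q(t,x)=O\left(xe^{-c\sqrt{\log x}}\right)$ from Theorem \ref{thm7979LN.200}, but in this paper Theorem \ref{thm7979LN.200} is \emph{deduced from} the present lemma: its proof in Subsection \ref{S2626N} writes $Q(t,x)=Q^{+}(t,x)-Q^{-}(t,x)$ and substitutes the asymptotic of Lemma \ref{lem9282LN.500}. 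Your proof is therefore circular, and the circularity is not a technicality: feeding your identity $Q^{\pm}=\tfrac{1}{2}[x]\pm\tfrac{1}{2}Q$ into the theorem's computation $Q=Q^{+}-Q^{-}$ returns the tautology $Q(t,x)=Q(t,x)$, so the pair of arguments together proves nothing. You correctly flagged that the only independently available bound, $Q(t,x)\ll x(\log\log x)^{-1/2+\varepsilon}$ from Lemma \ref{lem7766LN.200}, is too weak to give the stated error term; that observation is precisely why your route cannot work as written — there is no non-circular source in the paper for the strength of bound you need.

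For comparison, the paper's own proof does not pass through $Q(t,x)$ at all. It takes the densities $Q^{\pm\pm}(t,x)=\tfrac{1}{4}x+E^{\pm\pm}(x)$ with $E^{\pm\pm}(x)=o(x)$ from Theorem \ref{thm5225LN.500}, then asserts a permutation of $\{1,\ldots,[x]\}$ carrying $\{n\leq x:\lambda(n)\lambda(n+t)=+1\}$ onto $\{m\leq x:\lambda(m)=+1\}$ up to an error $E_0(x)$, and transfers the unconditional PNT-strength formula $\sum_{m\leq x,\,\lambda(m)=1}1=\tfrac{1}{2}[x]+O\left(xe^{-c\sqrt{\log x}}\right)$ across that matching. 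Whatever one thinks of that matching step (the existence of such a correspondence with controlled $E_0(x)$ is essentially equivalent to the conclusion), it is a genuinely different mechanism from yours, and it is the only place where the claimed $xe^{-c\sqrt{\log x}}$ error could enter. To repair your write-up you would either have to justify that transfer yourself or find an independent proof of $Q(t,x)=O\left(xe^{-c\sqrt{\log x}}\right)$; citing Theorem \ref{thm7979LN.200} is not admissible here.
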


\begin{proof}By Theorem \ref{thm5225LN.500}, each of the sign pattern $++$ and $--$ are equidistributed over the interval $[1,x]$ as $x\to\infty$, and each has the same natural density $\delta_{\lambda}^{++}(t)=\delta_{\lambda}^{--}(t)=1/4$. Thus, each double-sign pattern counting functions has an asymptotic formula of the form
	\begin{equation}\label{eq9282LN.510}
		Q^{\pm\pm}(t,x)=\sum_{\substack{n\leq x\\ \lambda(n)=\pm1,\; \lambda(n+t)=\pm1}}1	=\frac{1}{4}x+E^{\pm\pm}(x),
	\end{equation}	
	where $E^{\pm\pm}(x)=o(x)$ is an error term. \\
	
	Let $\displaystyle \pi:\mathcal{Z} \longleftrightarrow \mathcal{Z}$ be a permutation of the subset of integers $\mathcal{Z}=\{1,2,3,\ldots, [x]\}$, defined by $\pi(n)=m$. Each integer $n\leq x$ such that $\lambda(n)\lambda(n+t)=+1$ is mapped to an unique integer $m=\pi(n)\leq x$ such that $\lambda(m)=+1$. Therefore, the two counting functions are the same
	
	\begin{equation}\label{eq9282LN.530}
		\sum_{\substack{n\leq x\\ \lambda(n)=1,\; \lambda(n+t)=1}}1\;+\;\sum_{\substack{n\leq x\\ \lambda(n)=-1,\; \lambda(n+t)=-1}}1\;+\;E_0(x)=\sum_{\substack{m\leq x\\\lambda(m)=+1}}1,
	\end{equation}	 
	up to an error term $E_0(x)=o(x)$. Substituting the evaluations in \eqref{eq9282LN.510}, and \eqref{eq8844LN.100A},  for each counting function yields
	\begin{equation}\label{eq9282LN.540}
		\left( \frac{1}{4}[x]+E^{++}(x)\right) +\left( \frac{1}{4}[x]+E^{--}(x)\right) +E_0(x)=\frac{1}{2}[x]+O\left(xe^{-c\sqrt{\log x}} \right).
	\end{equation}
	Therefore, the total sum of error terms is bounded by
	\begin{equation}\label{eq9282LN.550}
		E(x)=E^{++}(x)+E^{--}(x)+E(x)=O\left(xe^{-c\sqrt{\log x}} \right),
	\end{equation}
	where $c>0$ is an absolute constant.
\end{proof}

\subsection{Result for the Liouville Autocorrelation Function} \label{S2626N}
The result for the Liouville autocorrelation function is unconditional, and has a standard error term.

\begin{proof}[\textbf{Proof}] ({\bfseries Theorem \ref{thm7979LN.200}}) Rewriting the autocorrelation function in terms of the double-sign patterns counting functions, and applying Lemma \ref{lem9282N.500} lead to the asymptotic formula
	\begin{eqnarray}\label{eq2626N.220A}
		Q(t,x)&=&\sum_{p\leq x}\lambda(n)\lambda(n+t)\nonumber\\
		&=&Q^{++}(t,x)+Q^{--}(t,x)-Q^{+-}(t,x)-Q^{-+}(t,x) \nonumber\\
		&=&Q^{+}(t,x)-Q^{-}(t,x) \nonumber\\
		&=&\frac{1}{2} [x]+O \left (xe^{-c\sqrt{\log x}}\right ) -\left( \frac{1}{2} [x]+O \left (xe^{-c\sqrt{\log x}}\right ) \right)  \\
		&=&O \left (xe^{-c\sqrt{\log x}}\right ) \nonumber,  
	\end{eqnarray} 
	where $t\ne0$ is a small integer, and $c>0$ is an absolute constant.
\end{proof}

\begin{exa}\label{exa5225N.705}{\normalfont Let $t=1$. This is a numerical example over a short interval $[x,x+y]=[10^7,10^7+10^3]$. There is no theoretical result for this case. However, the number of any double-sign pattern $\lambda(n)=\pm1, \lambda(n+1)=\pm1$ should be
		\begin{equation}\label{eq5225LN.710}
			Q^{\pm\pm}(t,x)	=\delta_{\lambda}^{\pm\pm}(t)x+o(x)=\frac{1}{4}x+o(x).
		\end{equation}		
		The numerical data shows that the actual value of the autocorrelation function is
		\begin{equation}\label{eq5225LN.730}
			\sum_{x\leq n\leq x+y}\lambda(n)\lambda(n+1)
			=27,
		\end{equation}
		and the actual values of the double-sign pattern counting functions are tabulated below.
		\begin{center}
			\begin{tabular}{ c|c|c|c } 
				$\lambda(n)$ & $\lambda(n+1)$ &Actual Count &Expected $Q^{\pm\pm}(1,x)$\\
				\hline 
				$+1$ & $+1$ & $1100/4$&$1000/4+o(x)$ \\ 
				$+1$ & $-1$ & $976/4$ &$1000/4+o(x)$\\ 
				$-1 $& $+1 $& $972/4$& $1000/4+o(x)$\\
				$-1$ &$ -1 $& $956/4$&$1000/4+o(x)$ \\		
			\end{tabular}
		\end{center}
	}
\end{exa}

\section{Correlation Functions Problems} \label{S6666P}
\subsection{Mobius autocorrelation function problems}
\begin{prob}\label{P6666.200} {\normalfont Let $x\geq x_0$ be a large number, and let $a<b$ be small integers. Determine the oscillation results for the autocorrelation functions 	$$\sum_{ n\leq x}\mu(n+a)\mu(n+b)=\Omega(x^{\beta})$$ over the integers, and 
		$$\sum_{ p\leq x}\mu(p+a)\mu(p+b)=\Omega(x^{\beta})$$ over the shifted primes, where  $\beta \in (1/2,1)$. In \cite{KS2022}, there is some evidence and material for random autocorrelation functions.
	}
\end{prob}
\begin{prob}\label{P6666.205} {\normalfont Let $x\geq x_0$ be a large number, and let $a<b$ be small integers. Assume the RH. Determine the conditional results for the autocorrelation functions $$\sum_{ p\leq x}\mu(p+a)\mu(p+b)=O(x^{\beta})$$ over the integers, and  
		$$\sum_{ p\leq x}\mu(p+a)\mu(p+b)=O(x^{\beta})$$ over the shifted primes, where  $\beta \in (1/2,1)$.
	}
\end{prob}

\begin{prob}\label{P6666.215} {\normalfont Let $x\geq x_0$ be a large number, let $y\geq x^{1/2}$, and let $a<b$ be small integers. Determine a nontrivial upper bound for the autocorrelation function 
		$$\sum_{x\leq n\leq x+y}\mu(n+a)\mu(n+b)$$ over the integers, and $$\sum_{x\leq p\leq x+y}\mu(p+a)\mu(p+b)$$ over the shifted primes in the short interval $[x,x+y]$.
	}
\end{prob}

\begin{prob}\label{P6666.220} {\normalfont Let $x\geq x_0$ be a large number, and let $a_0<a_1<a_2$ be small integers. Either conditionally or unconditionally, use sign patterns techniques to verify the asymptotic for the autocorrelation function $$\sum_{ n\leq x}\mu(n+a_0)\mu(n+a_1)\mu(n+a_2)=O\left( \frac{x}{(\log x)^{c}}\right) $$ over the integers. 
	}
\end{prob}

\begin{prob}\label{P6666.225} {\normalfont Let $a_0<a_1<\cdots<a_{k-1}$ be a subset of small integers. Determine the maximal value of $k\geq2$ for which the autocorrelations
		$$\sum_{n\leq x}\mu(n+a_0)\mu(n+a_1)\cdots\mu(n+a_{k-1})$$ over the integers, and 
		$$\sum_{p\leq x}\mu(p+a_0)\mu(p+a_1)\cdots\mu(p+a_{k-1})$$ over the shifted primes,	have nontrivial upper bounds.
	}
\end{prob} 
\begin{prob}\label{P6666.230} {\normalfont Let $x\geq x_0$ be a large number, let $1\leq a<q\ll (\log x)^c$, with $c\geq0$, and let $a_0<a_1$ be small integers. Determine a nontrivial upper bound for the autocorrelation function 
		$$\sum_{\substack{x\leq n\\n\equiv a \bmod q}}\mu(n+a_0)\mu(n+a_1)$$ over the integers in arithmetic progression, and $$\sum_{\substack{x\leq n\\n\equiv a \bmod q}}\mu(p+a_0)\mu(p+a_1)$$ over the shifted primes in arithmetic progression.
	}
\end{prob}

\subsection{Liouville autocorrelation function problems}
\begin{prob}\label{P6666.200B} {\normalfont Let $x\geq x_0$ be a large number, and let $a<b$ be small integers. Determine the oscillation results for the autocorrelation functions 	$$\sum_{ n\leq x}\lambda(n+a)\lambda(n+b)=\Omega(x^{\beta})$$ over the integers, and 
		$$\sum_{ p\leq x}\lambda(p+a)\lambda(p+b)=\Omega(x^{\beta})$$ over the shifted primes, where  $\beta \in (1/2,1)$. In \cite{KS2022}, there is some evidence and material for random autocorrelation functions.
	}
\end{prob}
\begin{prob}\label{P6666.205B} {\normalfont Let $x\geq x_0$ be a large number, and let $a<b$ be small integers. Assume the RH. Determine the conditional results for the autocorrelation functions $$\sum_{ n\leq x}\lambda(n+a)\lambda(n+b)=O(x^{\beta})$$ over the integers, and  
		$$\sum_{ p\leq x}\lambda(p+a)\lambda(p+b)=O(x^{\beta})$$ over the shifted primes, where  $\beta \in (1/2,1)$.
	}
\end{prob}

\begin{prob}\label{P6666.215B} {\normalfont Let $x\geq x_0$ be a large number, let $y\geq x^{1/2}$, and let $a<b$ be small integers. Determine a nontrivial upper bounds for the autocorrelation function 
		$$\sum_{x\leq n\leq x+y}\lambda(n+a)\lambda(n+b)$$ over the integers, and $$\sum_{x\leq p\leq x+y}\lambda(p+a)\lambda(p+b)$$ over the shifted primes in the short interval $[x,x+y]$.
	}
\end{prob}

\begin{prob}\label{P6666.220B} {\normalfont Let $x\geq x_0$ be a large number, and let $a_0<a_1<a_2$ be small integers. Use sign patterns techniques to verify the asymptotic for the autocorrelation function $$\sum_{ n\leq x}\lambda(n+a_0)\lambda(n+a_1)\lambda(n+a_2)=O\left( xe^{-c\sqrt{\log x}}\right) $$ over the integers
	}
\end{prob}

\begin{prob}\label{P6666.225B} {\normalfont Let $a_0<a_1<\cdots<a_{k-1}$ be a subset of small integers. Determine the maximal value of $k\geq2$ for which the autocorrelations
		$$\sum_{n\leq x}\lambda(n+a_0)\lambda(n+a_1)\cdots\lambda(n+a_{k-1})$$ over the integers, and 
		$$\sum_{p\leq x}\lambda(p+a_0)\lambda(p+a_1)\cdots\lambda(p+a_{k-1})$$ over the shifted primes,	have nontrivial upper bounds.
	}
\end{prob} 
\begin{prob}\label{P6666.230B} {\normalfont Let $x\geq x_0$ be a large number, let $1\leq a<q\ll (\log x)^c$, with $c\geq0$, and let $a_0<a_1$ be small integers. Determine a nontrivial upper bound for the autocorrelation function 
		$$\sum_{\substack{x\leq n\\n\equiv a \bmod q}}\lambda(n+a_0)\lambda(n+a_1)$$ over the integers in arithmetic progression, and $$\sum_{\substack{x\leq n\\n\equiv a \bmod q}}\lambda(p+a_0)\lambda(p+a_1)$$ over the shifted primes in arithmetic progression.
	}
\end{prob}

\subsection{Intercorrelation functions problems}

\begin{prob}\label{P6666.300} {\normalfont Let $t\ne0$ be a small integer. Use the identities $\mu(n)=\lambda(n)\mu^2(n)$ and $\lambda(n)=\sum_{d^2\mid n}\mu(n/d^2)$, etc., to show that
		$$\sum_{n\leq x}\mu(n)\mu(n+t)=\sum_{d^2\leq x}\sum_{e^2\leq x}\mu(d)\mu(e)\sum_{\substack{n\leq x\\d^2\mid n\\e^2\mid  n+t}}\lambda(n)\lambda(n+t),$$
		and $$  \sum_{n\leq x}\lambda(n)\lambda(n+t)=\sum_{d^2\leq x}\sum_{e^2\leq x}\mu(d)\mu(e)\sum_{\substack{n\leq x\\d^2\mid n\\e^2\mid  n+t}}\mu(n/d^2)\mu((n+t)/e^2).$$
	}
\end{prob} 
\begin{prob}\label{P6666.310} {\normalfont Let $a<b$ be a pair of small integers. Use the identities $\mu(n)=\lambda(n)\mu^2(n)$ and $\lambda(n)=\sum_{d^2\mid n}\mu(n/d^2)$, etc., to find an easy way to prove that
		$$\sum_{n\leq x}\mu(n+a)\mu(n+b)=O\left( \frac{x}{(\log x)^c}\right) $$
		implies that $$  \sum_{n\leq x}\lambda(n+a)\lambda(n+b)=O\left( \frac{x}{(\log x)^c}\right),$$
		where $c>0$. Is the converse true?
	}
\end{prob}

\subsection{Hardy-Littlewood-Chowla conjecture problems}
\begin{prob}\label{P6666.400A} {\normalfont Let $a<b$ be a pair of small integers. Use sign patterns techniques to estimate the twisted finite sum
		$$\sum_{n\leq x}\Lambda(n)\mu(n+a)\mu(n+b) .$$
	}
\end{prob} 
\begin{prob}\label{P6666.400B} {\normalfont Let $a<b$ be a pair of small integers. Use sign patterns techniques to estimate the twisted finite sum
		$$\sum_{n\leq x}\Lambda(n)\lambda(n+a)\lambda(n+b).$$
	}
\end{prob} 
\newpage


\currfilename.\\
\end{document}